\documentclass{amsart}
\usepackage{appendix}
\usepackage{tabularx}
\usepackage[numbers,sort&compress]{natbib}
\usepackage{color}
\usepackage{amsmath}

\newtheorem{theorem}{Theorem}[section]

\newtheorem{corollary}[theorem]{Corollary}
\newtheorem{definition}[theorem]{Definition}

\newtheorem{conj}[theorem]{Conjecture}
\newtheorem{prop}[theorem]{Proposition}
\newtheorem{question}[theorem]{Question}
\newtheorem{remark}[theorem]{Remark}

\numberwithin{equation}{section}
\allowdisplaybreaks[4]

\begin{document}

\title[Long-time existence of geometric flows]{Long-time existence of some geometric flows with bounded scalar curvature}
%Scalar curvature of some geometric flows
%    Remove any unused author tags.

%    author one information
\author{Chuanhuan Li}
\address{Shanghai Institute for Mathematics and Interdisciplinary Sciences (SIMIS), Shanghai 200433, China\newline
${\quad}$ Research Institute of Intelligent Complex Systems, Fudan University, Shanghai 200433, China}
\curraddr{}
\email{chli@simis.cn}
\thanks{}

%    author two information
\author{Yi Li$^{\ast}$}
\address{Center for Mathematics and Interdisciplinary Sciences, Fudan University, Shanghai 200433, China \newline
${\quad}$ Shanghai Institute for Mathematics and Interdisciplinary Sciences (SIMIS), Shanghai 200433, China}
\curraddr{}
\email{yilicms@simis.cn, yilicms@gmail.com}
\thanks{$^{\ast}$Corresponding author}

\subjclass[2020]{Primary 53E99; 53C25}

\keywords{}

\date{}

\dedicatory{dedicated to Professor Shing-Tung Yau's 75th birthday!}

\begin{abstract}
The analysis of singularities in scalar curvature is a classical problem. In this survey, we investigate the behavior of scalar curvature under several geometric flows, with a focus on three specific cases: the Ricci flow, the K\"ahler{-}Ricci flow coupled with $(1,1)$-forms, and the Laplacian flow.
\end{abstract}

\maketitle

%%%%%%%%%%%%%%%%%%%%%%%%%%%%%%%%%%%%%%%%%%%%%%%%
\section{Introduction}
%%%%%%%%%%%%%%%%%%%%%%%%%%%%%%%%%%%%%%%%%%%%%%%%%

Geometric flows play an important role in studying geometric structures, and have been investigated for more than forty years. Let $M$ be a compact smooth $n$-dimensional manifold, the general geometric flow is defined as:
\begin{align}\label{1.1}
    \partial_{t}g(t)=F(g(t)),
\end{align}
where $(g(t))_{t\in[0,T)}$ is a family of Riemannian metrics on $(M, g)$ {with $0<T\leq+\infty$}, $F(g(t))$ is a {symmetric} $2$-tensor. 

${}$
In the study of geometric flows, the theory of existence {of solutions} is a very important research direction. Especially, for the long-time existence of solutions to geometric flow, there is a famous question:

\begin{question}\label{que1.1}
    Let $(M,g(t))_{t\in[0,T)}$ be a solution to general geometric flow $\eqref{1.1}$. If the scalar curvature is uniformly bounded, then {is} $T=+\infty$ ?
\end{question}

Question \ref{que1.1} has seen substantial progress across various geometric flows. In this survey, we focus on three specific cases: Ricci flow on smooth manifolds; K\"ahler-Ricci flow coupled with $(1,1)$-forms on K\"ahler manifolds; Laplacian flow on $7$-dimensional manifolds with closed $G_{2}$-structures.

%%%%%%%%%%%%%%%%%%%%%%%%%%%%%%%%%%%%%%%%%%%%%%%%%%%%%%%%%%%%%%%%%%%%%%%%%%%%
\subsection{Ricci flow}
%%%%%%%%%%%%%%%%%%%%%%%%%%%%%%%%%%%%%%%%%%%%%%%%%%%%%%%%%%%%%%%%%%%%%%%%%%%%
Let $(M,g)$ be a closed, smooth $n$-dimensional Riemannian manifold. A famous result in geometric analysis was established by Hamilton in his seminal paper \cite{Ham82}, where he proved that any three-dimensional closed manifolds admitting a metric of positive Ricci curvature must also admit a metric of constant positive sectional curvature. To achieve this, he introduced the Ricci flow, defined by the evolution equation:
\begin{equation}
  \left \{
       \begin{array}{rl}
          \partial_{t}g(t)&=-2\!\ {\rm Ric}_{g(t)},\\
%           d\varphi(t)&=0,\\
            g(0)&=g,
       \end{array}
  \right.
\end{equation}
where $(g(t))_{t\in[0,T)}$ is a family of Riemannian metrics on $M$. The Ricci flow constitutes a weakly parabolic system of partial differential equations, whose short-time existence and uniqueness {of solutions} on closed manifolds was first proved by Hamilton \cite{Ham82}. {In the noncompact case, short-time existence  was proved by Shi in \cite{Shi89}, and uniqueness was later obtained by Chen and Zhu in \cite{CZ06}. }

A central objective in the study of the Ricci flow is to understand its long-time behavior and the formation of singularities. Key questions include whether the flow exists for all time or develops a finite-time singularity, and in the latter case, characterizing the singularity profile. Such analysis not only reveals the geometric and topological structure of the underlying manifold but also forms the foundation for many major achievements in modern geometric analysis, including the proof of the Poincar\'e and geometrization conjectures. 
%In this survey, we will discuss more details on the long time existence to the solution.
In this survey, we will provide a more detailed discussion of the long-time existence and singularities of solutions.

%\begin{itemize}

%\item {Ricci flow}:

%\end{itemize}

%%%%%%%%%%%%%%%%%%%%%%%%%%%%%%%%%%%%%%%%%%%%%%%%%%%%%%%%%%%%%%%%%%%%%%%%%%%%%%
\subsection{K\"ahler-Ricci flow coupled with $(1,1)$-forms}
%%%%%%%%%%%%%%%%%%%%%%%%%%%%%%%%%%%%%%%%%%%%%%%%%%%%%%%%%%%%%%%%%%%%%%%%%%%%
Let $(\mathcal{X},\omega)$ be a closed K\"ahler manifold with complex dimension $n$. In the study of K\"ahler manifolds, a central problem concerns metrics of constant scalar curvature.
K\"ahler metrics with constant scalar curvature (cscK metric) in the K\"ahler class $[\omega]$ have been studied {for} more {than} forty years. The cscK metric is unique up to {holomorphic} automorphisms \cite{CS14,CT08}. On the other hand, Yau-Tian-Donaldson conjectured that the existence of cscK metrics is equivalent to a certain geometric stability in geometric invariant theory (cf. \cite{Don01,Tian97,Yau93}). 

%${}$

One particularly important class of cscK metrics are K\"ahler-Einstein metrics. When $c_{1}(\mathcal{X})<0$, the existence of K\"ahler-Einstein metrics was independently proven by Aubin \cite{Au78} and Yau \cite{Yau78}. When $c_{1}(\mathcal{X})=0$, the existence of K\"ahler-Einstein metrics was proved by Yau \cite{Yau78}. In this case the K\"ahler-Einstein metrics are Ricci-flat, and are called Calabi-Yau metrics. When $c_{1}(\mathcal{X})>0$, the existence of K\"ahler-Einstein metrics {was shown to be equivalent to some stability condition,} which was independently first proved by Chen-Donaldson-Sun \cite{CDS15A,CDS15B,CDS15C} and Tian \cite{Tian97}. Besides K\"ahler-Einstein metrics, Donaldson proved the existence of the cscK metrics on toric surfaces under some stability conditions \cite{Don09}.

 To understand the existence of constant scalar curvature on K\"ahler manifolds. Yuan, Zhang and the second author \cite{LYZ20} introduced a K\"ahler-Ricci type flow, called K\"ahler-Ricci flow coupled with $(1,1)$-forms:

\begin{equation}
  \left \{
       \begin{array}{rl}
          \partial_{t}\omega(t)&=-\rho(\omega(t))-\omega(t)+\alpha(t), \\
          \partial_{t}\alpha(t)&=\overline{\Box}_{\omega(t)}\alpha(t), \\
           (\omega(0),\alpha(0))&=(\omega,\alpha),
       \end{array}
  \right.\label{flow1.3}
\end{equation}
where $(\omega(t))_{t\in[0,T)}$ is a family of K\"ahler metrics,  $\rho(\omega(t))$ denotes the Ricci form of $\omega(t)$, $\overline{\Box}_{\omega(t)}$   denotes the complex Hodge-Laplace operator, and $(\alpha(t))_{t\in[0,T)}$ is a family of closed $(1,1)$-forms.

{
The motivation to study the flow $\eqref{flow1.3}$ is its connection to constant scalar curvature K\"ahler (cscK) metrics. Suppose that $(\omega_{\infty},\alpha_{\infty})$ is a stationary solution to flow $\eqref{flow1.3}$. Then, in particular, $\alpha_{\infty}$ is a harmonic $(1,1)$-form with respect to $\omega_{\infty}$. This implies that ${\rm tr}_{\omega_{\infty}}(\alpha_{\infty})=$ constant, and therefore, $R_{\omega_{\infty}}=$ constant by equation $\eqref{flow1.3}$.}

%${}$

The authors established that any stationary solution of the flow corresponds to a constant scalar curvature K\"ahler metric, together with a harmonic $(1,1)$-form. The short-time existence of solutions to the flow follows from standard parabolic theory. More precisely, local existence persists even under weaker regularity assumptions specifically, when the underlying manifold  $(\mathcal{X},\omega)$ is merely Hermitian and the form  $\alpha$ is not necessarily closed. For the long-time existence to this flow, we will provide more details in Section \ref{sec3}. For other related works on $\eqref{flow1.3}$, see \cite{FGP2019, ShenSmith2026}.

%%%%%%%%%%%%%%%%%%%%%%%%%%%%%%%%%%%%%%%%%%%%%%%%%%%%%%%%%%%%%%%%%%%%%%%%%%%%
\subsection{Laplacian flow}
%%%%%%%%%%%%%%%%%%%%%%%%%%%%%%%%%%%%%%%%%%%%%%%%%%%%%%%%%%%%%%%%%%%%%%%%%%%%
In $G_2$ geometry, a central objective is the construction of $G_2$-manifolds, {with a particular focus on the irreducible compact case.} Joyce \cite{Joy00} provided the first examples of irreducible compact $G_2$-manifolds by solving a nonlinear elliptic equation. Inspired by Hamilton's work on Ricci flow \cite{Ham82}, Bryant proposed using a geometric flow to construct {torsion-free} $G_2$-structures on $7$-dimensional manifolds. Hence,  he introduced the Laplacian flow, defined as follows:
$$\partial_{ t}\varphi(t)=\Delta_{\varphi(t)}\varphi(t), $$
where $\Delta_{\varphi(t)}\varphi(t)=dd^{\ast}_{\varphi(t)}\varphi(t)+d^{\ast}_{\varphi(t)}d\varphi(t)$ is the Hodge Laplacian of $g(t)$, which is the Riemannian metric algebraically determined by $\varphi(t)$. 
%Since $\Delta_{\varphi}\varphi=dd^{\ast}_{\varphi}\varphi$ for a closed $G_{2}$-structure $\varphi$, we see that the closedness of $\varphi(t)$ is preserved along the Laplacian flow $\eqref{The closed Laplacian flow}$. 

${}$

The flow $\eqref{The closed Laplacian flow}$ can be viewed as the gradient flow for the Hitchin functional when the variations are restricted to the cohomology class of a closed $G_{2}$-structure, where {the} Hitchin functional introduced by Hitchin \cite{Hit00} {is}
$$\mathcal{H}:[\overline{\varphi}]_{+}\longrightarrow\mathbb{R}^{+},\ \varphi\longmapsto\frac{1}{7}\int_{M}\varphi\wedge\psi=\int_{M}\ast_{\varphi}1.$$
Here $\overline{\varphi}$ is a closed $G_{2}$-structure on $M$ and  $[\overline{\varphi}]_{+}$ is the open subset of the cohomology class $[\overline{\varphi}]$ consisting of $G_{2}$-structures. Any critical point of $\mathcal{H}$ gives rise to a torsion-free $G_{2}$-structure when {the} $7$-manifold $M$ is compact, {and they are strict local maxima (transverse to the action of diffeomorphisms).} 
%The stationary points of the Laplacian flow are harmonic forms $\varphi$, which on a compact manifold are the torsion-free $G_{2}$-structure.   
{When the initial $3$-form $\varphi_{0}$ is closed and we evolve inside a fixed cohomology class, Bryant and Xu \cite{BX11} proved the short-time existence and uniqueness of solutions to the following Laplacian flow with closed $G_{2}$-structures. }

{\begin{equation}
  \left \{
       \begin{array}{rl}
          \partial_{ t}\varphi(t)&=\Delta_{\varphi(t)}\varphi(t),\\
           {d\varphi(t)}&{=0,}\\
           \varphi(0)&=\varphi,
       \end{array}
  \right.
\end{equation}}
{where $\varphi$ is an initial  closed $G_{2}$-structure (a positive closed three-form). When restricted to closed forms,  the Laplacian flow also takes the form}
{\begin{equation}
  \left \{
       \begin{array}{rl}
          \partial_{ t}\varphi(t)&=dd^{\ast}\varphi(t),\\
           %{d\varphi(t)}&{=0,}\\
           \varphi(0)&=\varphi,
       \end{array}
  \right.\label{1.5}
\end{equation}}
As for long-time existence and the analysis of singularities of Laplacian flow $\eqref{1.5}$, we will provide a detailed discussion in Section $4$.

%The Laplacian flow can be interpreted as the gradient flow of the Hitchin functional under variations restricted to the cohomology class of the closed $G_2$-structure. On a compact 7-manifold $M$, any critical point of Hitchin functional corresponds to a torsion-free $G_2$-structure. 

%%%%%%%%%%%%%%%%%%%%%%%%%%%%%%%%%%%%%%%%%%%%%%%%%%%%%%%%%%%%%%%%%%%%%%%%%%%%%%
\section{Ricci flow}
%%%%%%%%%%%%%%%%%%%%%%%%%%%%%%%%%%%%%%%%%%%%%%%%%%%%%%%%%%%%%%%%%%%%%%%%%%%

In this section, let $(M,g)$ be a closed Riemannian manifold. we provide a more detailed discussion of long-time existence and the formation of singularities of Ricci flow
\begin{equation}
  \left \{
       \begin{array}{rl}
          \partial_{t}g(t)&=-2\!\ {\rm Ric}_{g(t)},\\
%           d\varphi(t)&=0,\\
            g(0)&=g.
       \end{array}
  \right.
\end{equation}
 Unlike parabolic equations on $\mathbb{R}^n$, long-time existence requires certain curvature assumptions. In \cite{Ham82}, Hamilton established long-time existence under the assumption of bounded Riemannian curvature.

%There are many fruits of Ricci flow. Existence theory plays a fundamental role in the study of partial differential equations. On compact manifolds, the short-time existence and uniqueness of solutions were established by Hamilton in \cite{Ham82}. 

%${}$

\begin{theorem}[\cite{Ham82}]\label{thm2.1}
    Suppose $(M,g(t))_{t\in [0,T)}$ is a solution to Ricci flow. If {$T$ is the maximal time and} $T< +\infty$, then
    $$\limsup_{t\to T}
\left(\max_{M}|{\rm Rm}_{g(t)}|_{g(t)}\right)= +\infty.$$
\end{theorem}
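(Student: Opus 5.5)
We argue by contradiction. Suppose $T<+\infty$ is maximal but $\sup_{M\times[0,T)}|{\rm Rm}_{g(t)}|_{g(t)}\le K<\infty$. The plan is to show that $g(t)$ converges smoothly to a metric $g(T)$ as $t\to T$. Restarting the flow from $g(T)$ by Hamilton's short-time existence on closed manifolds then extends the solution past $T$. This contradicts maximality, so the curvature must blow up; since it is then unbounded on $[0,T)$, and bounded on each compact subinterval, we get $\limsup_{t\to T}\max_M|{\rm Rm}|=+\infty$.

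\emph{Step 1: uniform equivalence of metrics.} Since $|\partial_t g|_{g}=2|{\rm Ric}|\le C(n)K$, for any vector $v$ we have $|\partial_t \log g(t)(v,v)|\le CK$. Hence $e^{-CKT}g(0)\le g(t)\le e^{CKT}g(0)$ on $[0,T)$. Moreover $g(t)=g(0)-2\int_0^t{\rm Ric}\,ds$ is Cauchy in $C^0$ as $t\to T$, so it converges uniformly to a continuous symmetric tensor $g(T)$. This limit is positive definite by the equivalence just proved.

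\emph{Step 2: derivative estimates (the main obstacle).} We need bounds $\sup_{M\times[0,T)}|\nabla^k{\rm Rm}|\le C_k$ for every $k$, with $C_k$ depending on $K$, $T$, $n$ and $g(0)$. I would use Shi-type Bernstein estimates, but global in time, since the initial metric is smooth and there is no need to lose a factor $t^{-k/2}$. The evolution equations have the form $\partial_t\nabla^k{\rm Rm}=\Delta\nabla^k{\rm Rm}+\sum_{i+j=k}\nabla^i{\rm Rm}*\nabla^j{\rm Rm}$. From these one derives $\partial_t|\nabla^k{\rm Rm}|^2\le\Delta|\nabla^k{\rm Rm}|^2-2|\nabla^{k+1}{\rm Rm}|^2+C\sum_{i+j=k}|\nabla^i{\rm Rm}||\nabla^j{\rm Rm}||\nabla^k{\rm Rm}|$. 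Proceeding by induction on $k$, consider $F=|\nabla^k{\rm Rm}|^2+A|\nabla^{k-1}{\rm Rm}|^2$ with $A$ large. The good term $-2A|\nabla^k{\rm Rm}|^2$ absorbs the bad terms, giving $\partial_tF\le\Delta F+C(F+1)$. The maximum principle then bounds $F$ by $e^{CT}(\max F(0)+1)$, which is finite because $T<\infty$. The bookkeeping of the $*$-terms and the choice of constants is where the care lies.

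\emph{Step 3: smooth convergence and extension.} Fix the background metric $g(0)$ with connection $\bar\nabla$. Write $\partial_t\Gamma=g^{-1}*\nabla{\rm Ric}$ and integrate in time using Step 2, which bounds $\Gamma(t)-\bar\Gamma$ uniformly. Iterating this gives uniform bounds on $\bar\nabla^m g(t)$ and on $\bar\nabla^m\partial_t g(t)$ for all $m$. Therefore $g(t)\to g(T)$ in $C^\infty$, and $g(T)$ is a smooth Riemannian metric. Let $\tilde g(t)$ be the solution on $[T,T+\varepsilon)$ starting from $g(T)$, given by Hamilton's short-time existence. Concatenating it with $g(t)$ produces a flow on $[0,T+\varepsilon)$. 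This flow is smooth across $t=T$: all time derivatives match there, since each $\partial_t^j g$ is expressed through spatial derivatives of the metric, which converge. This contradicts the maximality of $T$ and proves the theorem.
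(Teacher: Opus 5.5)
Your proposal is correct and is essentially Hamilton's original proof. The paper itself gives no proof of this theorem and only cites Hamilton's work. The argument runs as follows: a uniform bound on $|{\rm Rm}|$ gives, via the maximum principle and induction on $k$, uniform bounds on every $|\nabla^k{\rm Rm}|$ over the whole interval $[0,T)$. This yields $C^\infty$ convergence of $g(t)$ to a smooth metric $g(T)$, and restarting the flow from $g(T)$ contradicts the maximality of $T$.

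One minor remark: in Step 2 the auxiliary term $A|\nabla^{k-1}{\rm Rm}|^2$ is not needed. Once the lower-order bounds are in place, the reaction terms are already at most linear in $|\nabla^k{\rm Rm}|^2$, and the initial data are smooth, so the maximum principle applies directly to $|\nabla^k{\rm Rm}|^2$.
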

Building on Perelman{'}s noncollapsing theorem for the Ricci flow, \v{S}e\v{s}um \cite{Ses05} improved upon Hamilton's result (Theorem \ref{thm2.1}) and established the following finite-time singularity models.

\begin{theorem}[\cite{Ses05}]\label{thm2.2}
    Suppose $(M,g(t))_{t\in [0,T)}$ is a solution to Ricci flow. If {$T$ is the maximal time and} $T< +\infty$, then
    $$\limsup_{t\to T}
\left(\max_{M}|{\rm Ric}_{g(t)}|_{g(t)}\right)= +\infty.$$
\end{theorem}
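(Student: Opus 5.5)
We argue by contradiction. Suppose $T<+\infty$ is maximal but $|{\rm Ric}_{g(t)}|_{g(t)}\le K$ on $M\times[0,T)$. By Theorem \ref{thm2.1}, $|{\rm Rm}|$ must then be unbounded as $t\to T$. So we can pick points $(x_i,t_i)$ with $t_i\to T$ and $Q_i:=|{\rm Rm}|(x_i,t_i)\to\infty$. We choose them by a standard point-picking argument so that $|{\rm Rm}|\le 4Q_i$ on a backward parabolic neighbourhood of size comparable to $Q_i^{-1/2}$; the simplest choice is $Q_i=\max_{M\times[0,t_i]}|{\rm Rm}|$. We then rescale, setting $g_i(s)=Q_i\,g(t_i+s/Q_i)$ for $s\in[-Q_it_i,0]$. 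These are Ricci flows with $|{\rm Rm}_{g_i}|\le 1$ on their domain, $|{\rm Rm}_{g_i}|(x_i,0)=1$, and $|{\rm Ric}_{g_i}|\le K/Q_i\to0$.

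Since $T<\infty$, Perelman's no local collapsing theorem gives a $\kappa>0$ such that $g(t)$ is $\kappa$-noncollapsed on scales $\le \sqrt{T}$. This property is scale invariant, so the rescaled flows $g_i$ are $\kappa$-noncollapsed on scales $\le\sqrt{Q_iT}\to\infty$. We therefore get a uniform injectivity radius lower bound at $(x_i,0)$, by Cheeger--Gromov--Taylor. Together with Shi's derivative estimates, Hamilton's compactness theorem then gives a subsequence converging in the pointed $C^\infty$ Cheeger--Gromov sense to a complete ancient solution $(M_\infty,g_\infty(s),x_\infty)_{s\in(-\infty,0]}$.

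The limit satisfies ${\rm Ric}_{g_\infty}\equiv0$ and $|{\rm Rm}_{g_\infty}|(x_\infty,0)=1$, and it is $\kappa$-noncollapsed on all scales. Since $\partial_s g_\infty=0$, the limit is a static Ricci-flat metric. We still need a contradiction from a complete, nonflat, Ricci-flat, $\kappa$-noncollapsed manifold, and this is the main obstacle. Such manifolds can exist, for example Eguchi--Hanson, so noncollapsing alone is not enough. The extra input comes from Perelman's reduced volume. Because the Ricci curvature is bounded before rescaling, the volume form is uniformly equivalent along the original flow, and the metrics $g(t)$ are uniformly bilipschitz to $g(0)$ on $[0,T)$. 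Using this, I would show that the rescaled limit has Euclidean volume growth with ratio close to that of the reduced volume at the base point, and that the asymptotic reduced volume equals the value at the limit scale. The monotonicity of the reduced volume is an equality case exactly for gradient shrinking solitons. So the limit would be a Ricci-flat gradient shrinking soliton, $\nabla^2 f=\frac{g}{2\tau}$, which forces it to be flat Euclidean space. That contradicts $|{\rm Rm}|(x_\infty,0)=1$.

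An alternative for the last step, which I would try first because it is more elementary, uses the bilipschitz bound $e^{-2KT}g(0)\le g(t)\le e^{2KT}g(0)$. This shows the rescaled metrics $g_i(0)$ are uniformly bilipschitz to the blow-ups $Q_ig(0)$ of a fixed smooth metric near $x_i$. Those blow-ups converge to flat $\mathbb{R}^n$, so $(M_\infty,g_\infty)$ is bilipschitz to $\mathbb{R}^n$. It therefore has maximal volume growth and a tangent cone at infinity bilipschitz to $\mathbb{R}^n$. A Ricci-flat manifold whose volume ratio is close to Euclidean is flat, by Anderson's gap/$\epsilon$-regularity and Bishop--Gromov. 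The delicate point is to get the asymptotic volume ratio exactly equal to $\omega_n$, rather than only comparable to it. I would handle this by evaluating $|B_r|$ on the original manifold at scales $Q_i^{-1/2}r$ where $g(t_i)$ is almost isometric to $g(0)$ in volume: $d\mu$ changes by the factor $e^{\pm KT}$, and comparing infinitesimal balls gives the ratio. If this only yields a ratio bounded below, I fall back on the reduced-volume rigidity argument above.
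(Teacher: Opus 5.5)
The set-up is fine: contradiction, point-picking with $Q_i=\max_{M\times[0,t_i]}|\mathrm{Rm}|$, Perelman's noncollapsing plus Hamilton compactness, and a static, Ricci-flat, nonflat limit. This is the blow-up route the survey attributes to \v{S}e\v{s}um. The gap is the decisive last step, ruling out a nonflat Ricci-flat limit, which neither of your two routes carries out.

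In your second route you compare $g(t_i)$ with $g(0)$. The constants there ($e^{\pm 2KT}$ for the metric, $e^{\pm nKT}$ for the volume form) are fixed and do not tend to $1$, so $g(t_i)$ is \emph{not} ``almost isometric to $g(0)$ in volume''. After rescaling you only get $\mathrm{Vol}\,B_{g_\infty}(x_\infty,r)\ge c\,\omega_n r^n$ with some $c=c(n,K,T)<1$, which is no better than what noncollapsing already gave. Anderson's gap theorem needs $c\ge 1-\epsilon(n)$ and Bishop--Gromov rigidity needs $c=1$, so nothing follows.

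The reduced-volume fallback does not supply the missing estimate either. The base points $(x_i,t_i)$ vary with $i$, so no monotonicity ties the limit's reduced volume to the original flow. On a static Ricci-flat limit one has $\ell=d(x_\infty,\cdot)^2/4\tau$, so $\tilde V(\tau)$ decreases from $1$ to $\omega_n^{-1}\lim_{r\to\infty}r^{-n}\mathrm{Vol}\,B_{g_\infty}(x_\infty,r)$. Constancy of $\tilde V$ is therefore exactly the statement that the asymptotic volume ratio equals $\omega_n$, the very thing you could not prove. Your sentence about the asymptotic reduced volume equalling the value at the limit scale is unsupported.

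The missing idea is to use $T-t_i\to 0$ and compare with the time-$T$ limit rather than with $g(0)$.
\begin{itemize}
\item Since $|\partial_t g|\le 2Kg$, the metrics $g(t)$ converge uniformly as $t\to T$ to a continuous Riemannian metric $g(T)$. You have $e^{-2K(T-t)}g(T)\le g(t)\le e^{2K(T-t)}g(T)$, and the volume forms agree up to $e^{\pm nK(T-t)}$. These constants do tend to $1$ along $t_i$.
\item A continuous metric on a compact manifold is uniformly almost Euclidean at small scales: for every $\epsilon>0$ there is $\rho_0>0$ with $\mathrm{Vol}_{g(T)}B_{g(T)}(x,\rho)\ge(1-\epsilon)\omega_n\rho^n$ for all $x\in M$ and $\rho<\rho_0$.
\item Hence $Q_i^{n/2}\,\mathrm{Vol}_{g(t_i)}B_{g(t_i)}(x_i,rQ_i^{-1/2})\ge e^{-2nK(T-t_i)}(1-\epsilon)\,\omega_n r^n$ for large $i$. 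Passing to the limit gives $\mathrm{Vol}\,B_{g_\infty}(x_\infty,r)\ge\omega_n r^n$ for every $r>0$.
\item Bishop--Gromov on the Ricci-flat limit gives the reverse inequality. Its equality case makes $B_{g_\infty}(x_\infty,r)$ a Euclidean ball, contradicting $|\mathrm{Rm}_{g_\infty}|(x_\infty,0)=1$.
\end{itemize}
Argued this way, Perelman's theorem is not even needed for the compactness step. Uniform equivalence with the smooth metric $g(0)$ already gives the small-scale volume lower bound that Cheeger--Gromov--Taylor requires.
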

Based on local curvature estimates for $|{\rm Rm}_{g(t)}|_{L^{p}(g(t))}$, Kotschwar, Munteanu, and Wang \cite{KMW16} not only recovered the same result in the compact case, but also extended \v{S}e\v{s}um's result (Theorem \ref{thm2.2}) to the noncompact setting.

In the context of integral bounds, Wang \cite{Wang08} and Ye \cite{Ye08} independently established the following {singularity} result.
\begin{theorem}[\cite{Wang08,Ye08}]
    Suppose $(M,g(t))_{t\in [0,T)}$ is a solution to Ricci flow. If {$T$ is the maximal time and}  $T< +\infty$, then
    $$\int^{T}_{0}\int_{M}
|{\rm Rm}_{g(t)}|^{\frac{n+2}{2}}_{g(t)}dV_{g(t)}dt
= +\infty.$$
\end{theorem}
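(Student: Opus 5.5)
We argue by contradiction and reduce to Hamilton's criterion, Theorem \ref{thm2.1}. Suppose $T<+\infty$ is maximal but
$$\int_0^T\int_M |{\rm Rm}_{g(t)}|^{\frac{n+2}{2}}_{g(t)}\,dV_{g(t)}\,dt=:A<+\infty .$$
By Theorem \ref{thm2.1} there are times $t_i\to T$ and points $x_i\in M$ with $Q_i:=|{\rm Rm}|(x_i,t_i)\to+\infty$. We may choose the points by a point-picking argument so that $|{\rm Rm}|\le 4Q_i$ on a backward parabolic neighbourhood of size comparable to $Q_i^{-1/2}$ around $(x_i,t_i)$. For instance, take $(x_i,t_i)$ nearly maximizing $|{\rm Rm}|$ over $M\times[0,t_i]$; this is possible since the maximum tends to infinity.

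Next we rescale parabolically: $g_i(s)=Q_i\,g(t_i+s/Q_i)$ for $s\in[-t_iQ_i,0]$. The key observation is that the quantity $\int\!\!\int |{\rm Rm}|^{\frac{n+2}{2}}\,dV\,dt$ is invariant under this scaling. Indeed, $|{\rm Rm}|$ scales like $Q^{-1}$, $dV$ like $Q^{n/2}$ and $dt$ like $Q$, so the exponent $\frac{n+2}{2}$ is exactly critical. Hence the integral of $|{\rm Rm}_{g_i}|^{\frac{n+2}{2}}$ over the rescaled slab corresponding to $[t_i-\delta,t_i]$ equals the original integral over $[t_i-\delta,t_i]$. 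Because $A<\infty$, this tends to $0$ as $i\to\infty$ for fixed $\delta$, by absolute continuity of the integral.

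For compactness we need noncollapsing. Perelman's $\kappa$-noncollapsing theorem applies on the finite interval $[0,T)$, since it holds at all scales $\le\sqrt T$. Together with the curvature bound $|{\rm Rm}_{g_i}|\le4$ on $P(x_i,0,1)$, it gives a uniform injectivity radius lower bound at $x_i$. Shi's derivative estimates then give bounds on all derivatives of curvature. By Hamilton's compactness theorem, a subsequence of $(M,g_i(s),x_i)$, $s\in[-1,0]$, converges in the pointed Cheeger--Gromov sense to a complete limit flow $(M_\infty,g_\infty(s),x_\infty)$ with $|{\rm Rm}_{g_\infty}|(x_\infty,0)=1$.

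On the other hand, smooth convergence on compact sets combined with the vanishing of the rescaled integrals shows
$$\int_{-1}^0\int_{B_{g_\infty(0)}(x_\infty,1)}|{\rm Rm}_{g_\infty}|^{\frac{n+2}{2}}\,dV\,ds=0 .$$
So $g_\infty$ is flat on this region, which contradicts $|{\rm Rm}_{g_\infty}|(x_\infty,0)=1$ by continuity. This contradiction gives $A=+\infty$.

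\textbf{Main obstacle.} The delicate step is the point selection, which must give curvature control on a full parabolic neighbourhood of fixed rescaled size and not just at the final time slice. If the naive choice of maximizing over $M\times[0,t_i]$ is insufficient, we will instead use a Moser iteration for $|{\rm Rm}|$. Under small $L^{\frac{n+2}{2}}$ space-time norm, the evolution inequality
$$\partial_t|{\rm Rm}|\le\Delta|{\rm Rm}|+C|{\rm Rm}|^2$$
yields an $\varepsilon$-regularity estimate $|{\rm Rm}|\le C r^{-2}$ on $P(x,t,r/2)$. This uses the Sobolev constant control that comes from $\kappa$-noncollapsing. Such an estimate gives the contradiction directly near $T$, bypassing blow-up limits.
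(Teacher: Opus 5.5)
The survey does not prove this statement; it only quotes it from Wang and Ye, so there is no in-paper argument to compare against. Your main argument is a correct blow-up proof. The exponent $\frac{n+2}{2}$ makes $\int\!\!\int|{\rm Rm}|^{\frac{n+2}{2}}dV\,dt$ scale invariant, and Perelman's noncollapsing together with Shi's estimates gives Hamilton compactness. The resulting limit is flat on $B_{g_\infty(0)}(x_\infty,1)\times[-1,0]$ while $|{\rm Rm}_{g_\infty}|(x_\infty,0)=1$, which is the contradiction.

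Your fallback is essentially Ye's route: an $\varepsilon$-regularity estimate via Moser iteration. It gives a quantitative local curvature bound rather than just a contradiction. However, its Sobolev input is the uniform Sobolev inequality along the flow, which comes from the monotonicity of Perelman's entropy (Ye, Q.~Zhang). It does not come from $\kappa$-noncollapsing, because noncollapsing requires a pointwise curvature bound as a hypothesis, and that is exactly what an $\varepsilon$-regularity argument does not yet have.

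Two points in your write-up need fixing.

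\emph{The fixed-$\delta$ claim is false.} You say the integral over $[t_i-\delta,t_i]$ tends to $0$ for fixed $\delta$, but it tends to $\int_{T-\delta}^{T}\int_M|{\rm Rm}|^{\frac{n+2}{2}}dV\,dt>0$. What you actually need, and what is true, concerns the rescaled slab $s\in[-1,0]$, which corresponds to $t\in[t_i-Q_i^{-1},t_i]$. For any fixed $\epsilon>0$ this interval lies in $[T-\epsilon,T)$ once $i$ is large. So its integral is bounded by the tail $\int_{T-\epsilon}^{T}\int_M|{\rm Rm}|^{\frac{n+2}{2}}dV\,dt$, which tends to $0$ as $\epsilon\to0$ because $A<+\infty$.

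\emph{Your ``main obstacle'' is not one.} Let $t_i$ be the first time at which $\max_M|{\rm Rm}_{g(t)}|=i$, and let $x_i$ be a maximum point. Then $|{\rm Rm}_{g_i}|\le1$ on all of $M\times[-t_iQ_i,0]$ with $t_iQ_i\to+\infty$. This global bound is exactly what Hamilton's compactness theorem needs; a bound only on $P(x_i,0,1)$ would not yield a complete limit. You can even skip the limit. Shi's estimates give $|{\rm Rm}_{g_i}|\ge\frac12$ on $B_{g_i(0)}(x_i,c)\times[-c,0]$ for some fixed $c>0$, and noncollapsing bounds the volume of this ball from below, uniformly in $s$ and $i$. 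Hence the rescaled integral stays bounded away from $0$, which is already a contradiction.
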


Furthermore, Wang \cite{Wang08} provided an alternative extension for the Ricci flow, demonstrating that
\begin{theorem}[\cite{Wang08}]
     Suppose $(M,g(t))_{t\in [0,T)}$ is a solution to Ricci flow. If {$T$ is the maximal time}, $T< +\infty$ and ${\rm Ric}_{g(t)}\geq-C$ {with a positive constant $C$}, then
     $$\int^{T}_{0}\int_{M}|R_{g(t)}|^{\frac{n+2}{2}}_{g(t)}
dV_{g(t)}dt= + \infty.$$
\end{theorem}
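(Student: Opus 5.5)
We argue by contradiction. Suppose $T<+\infty$ is maximal, ${\rm Ric}_{g(t)}\geq -C$, and
$$\int_0^T\int_M |R_{g(t)}|^{\frac{n+2}{2}}\,dV_{g(t)}\,dt<+\infty.$$
We aim to show that $|{\rm Rm}|$ stays bounded up to $T$, which contradicts Theorem \ref{thm2.1}. Alternatively, we aim to show that $\int_0^T\int_M|{\rm Rm}|^{\frac{n+2}{2}}<\infty$, which contradicts the Wang--Ye theorem stated above.

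\textbf{Step 1: pointwise control by $R$.} The lower bound ${\rm Ric}\geq -C$ lets us control the whole Ricci tensor by the scalar curvature. Indeed, ${\rm Ric}+Cg\geq 0$, so
$$|{\rm Ric}+Cg|\leq {\rm tr}({\rm Ric}+Cg)=R+nC,$$
and therefore $|{\rm Ric}|\leq |R|+2nC$. Hence the hypothesis gives
$$\int_0^T\int_M|{\rm Ric}|^{\frac{n+2}{2}}<\infty.$$
The reduction is then from Ricci to full curvature, with the Ricci lower bound playing the role of the $L^{\frac{n+2}{2}}$ hypothesis.

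\textbf{Step 2: blow-up analysis.} If $\sup_M|{\rm Rm}|\to\infty$ as $t\to T$, pick points $(x_i,t_i)$ with $Q_i=|{\rm Rm}|(x_i,t_i)\to\infty$, chosen by a point-picking argument so that $|{\rm Rm}|\leq 4Q_i$ on a backward parabolic neighbourhood. Rescale by setting
$$g_i(s)=Q_i\,g\!\left(t_i+s/Q_i\right).$$
Perelman's $\kappa$-noncollapsing on the finite interval $[0,T)$ gives an injectivity radius bound. Hamilton's compactness theorem then yields a limit ancient flow $g_\infty(s)$, $s\in(-\infty,0]$ (or at least defined on $[-1,0]$), with $|{\rm Rm}_{g_\infty}|(x_\infty,0)=1$.

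\textbf{Step 3: the limit is Ricci flat.} Under this rescaling the quantity $\int\!\!\int |{\rm Ric}|^{\frac{n+2}{2}}\,dV\,dt$ is scale invariant, because the exponent $\frac{n+2}{2}$ is exactly the parabolic critical one. Moreover, the integral over the shrinking parabolic neighbourhoods tends to zero by absolute continuity of the finite integral. So $\int\!\!\int |{\rm Ric}_{g_\infty}|^{\frac{n+2}{2}}=0$ on compact sets, i.e.\ ${\rm Ric}_{g_\infty}\equiv 0$. The Ricci lower bound also rescales to ${\rm Ric}_{g_i}\geq -C/Q_i\to 0$, which is consistent with this.

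\textbf{Step 4: contradiction.} A Ricci flat limit is static, so we obtain a complete, $\kappa$-noncollapsed Ricci-flat manifold with bounded curvature and $|{\rm Rm}|=1$ at a point. This alone is not contradictory. We must also exclude it using either a volume argument or the finiteness of the Wang--Ye quantity for ${\rm Rm}$. The approach I expect to work is to follow Wang's original argument instead. Using ${\rm Ric}\geq -C$ and Bishop--Gromov, obtain uniform volume ratio control, and then run Moser iteration on the evolution inequality
$$\partial_t R\leq \Delta R+2|{\rm Ric}|^2\leq \Delta R+ c(R+C)^2.$$
The $L^{\frac{n+2}{2}}$ smallness of the potential $R$ on small parabolic cylinders (the critical exponent) makes the iteration close, giving a uniform bound on $R$. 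Hence $|{\rm Ric}|$ is bounded, contradicting Theorem \ref{thm2.2}.

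The main obstacle is this Moser iteration step. It needs a uniform local Sobolev inequality along the flow, which I would derive from the Ricci lower bound together with noncollapsing, with constants independent of $t$ near $T$. It also needs careful handling of the time-dependent volume form, using $\partial_t dV=-R\,dV$.
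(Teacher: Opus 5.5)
The paper gives no proof of this statement; it only cites Wang. So your argument has to stand on its own, and it has a genuine gap exactly where you flag the main obstacle. Step 1 is correct. The final skeleton is also right: bound $R$ by Moser iteration on $\partial_t R\le \Delta R+c(R+C)^2$, then use $|{\rm Ric}|\le |R|+2nC$ and Theorem \ref{thm2.2}. Steps 2--3 are a dead end, as you note, since rescaling by $|{\rm Rm}|$ only produces a Ricci-flat limit. So everything rests on a uniform Sobolev inequality, and the source you propose for it is circular.

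On the unrescaled flow, Perelman's noncollapsing gives ${\rm Vol}\,B_{g(t)}(x,r)\ge \kappa r^n$ only when $R\le r^{-2}$ (or $|{\rm Rm}|\le r^{-2}$) on that ball. Uniformly in $x$, that means only for $r\le (\max_M R)^{-1/2}$, which tends to $0$ if $R$ blows up — precisely the situation you are trying to exclude. Bishop--Gromov under ${\rm Ric}\ge -C$ only pushes volume lower bounds from larger balls down to smaller ones, so it cannot upgrade this to a fixed scale. The diameter is bounded, because $\partial_t g\le 2Cg$, so a uniform positive lower bound on ${\rm Vol}_{g(t)}(M)$ would suffice. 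Nothing in your argument supplies one.

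The missing idea is to choose the scale by the scalar curvature rather than by $|{\rm Rm}|$. If $R$ is unbounded above, pick $(x_i,t_i)$ with $Q_i=R(x_i,t_i)=\max_{M\times[0,t_i]}R\to\infty$ and set $g_i(s)=Q_i\,g(t_i+s/Q_i)$. Then $R_{g_i}\le 1$ and ${\rm Ric}_{g_i}\ge -(C/Q_i)g_i$, so $|{\rm Ric}_{g_i}|\le 2$ for $s\in[-1,0]$. This gives three things on that interval:
\begin{itemize}
\item the metrics are uniformly equivalent;
\item Perelman's noncollapsing, in the version requiring only $R\le r^{-2}$, gives ${\rm Vol}\ge\kappa$ for unit balls;
\item together with the Ricci lower bound, this yields a uniform local Sobolev inequality at unit scale.
\end{itemize}
The function $u=R_{g_i}+2nC/Q_i\ge 0$ satisfies $\partial_s u\le \Delta u+2u^2$. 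Its $L^{\frac{n+2}{2}}$ norm on the unit backward cylinder at $(x_i,0)$ is controlled by $\int_{t_i-Q_i^{-1}}^{t_i}\int_M |R|^{\frac{n+2}{2}}\,dV\,dt$ plus a term tending to $0$. By scale invariance and absolute continuity this tends to $0$. Moser iteration with this small critical potential then forces $u(x_i,0)\to 0$, contradicting $u(x_i,0)\ge 1$. No smooth limit or Hamilton compactness is needed, and none is available, since $|{\rm Rm}_{g_i}|$ is uncontrolled.

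Alternatively, you can use the Zhang--Ye uniform Sobolev inequality along Ricci flow on a closed manifold over a finite time interval. It needs no curvature hypothesis; its extra $\frac14\int R v^2$ term is absorbed like the potential. With it, a global-in-space Moser iteration on $M\times[t_0,T)$, with $t_0$ close to $T$, closes your Step 4 directly.
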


A sufficient condition for extending the Ricci flow beyond time $T$ is the uniform boundedness of either the Riemann or Ricci curvature. Nevertheless, the optimal curvature condition for such an extension remains unknown. This gap in understanding has led to a widely studied conjecture concerning Ricci flow singularities, which asserts the following:
\begin{conj}[\cite{Cao11,Wang08}]\label{conj2.5}
Suppose $(M,g(t))_{t\in [0,T)}$ is a solution to Ricci flow. If {$T$ is the maximal time} and $T< +\infty$, then
$$\limsup_{t\to T}\left(\max_{M}R_{g(t)}\right)
= +\infty.$$
\end{conj}

Up to now, this conjecture has been resolved in several important settings: the three-dimensional case was established independently by Hamilton \cite{Ham82} and Ivey \cite{Ivey93}; the K\"ahler{-}Ricci flow case by Zhang \cite{zhan10}; and the Type-I Ricci flow case by Enders, M\"uller, and Topping \cite{EMT11}.

${}$

By extending Hamilton's technique introduced in \cite{Ham82}, Cao \cite{Cao11} established a curvature pinching estimate for the traceless Ricci curvature tensor and applied it to obtain a partial result  of the conjecture in the general case.
\begin{theorem}[\cite{Cao11}]\label{thm2.7}
Suppose $(M^{n},g(t))_{t\in [0,T)}$ is a solution to Ricci flow with $n\geq 3$, {$T$ is the maximal time}. Let $c$ be a positive constant such that $R_{g(t)}+c>0$ on $M\times[0,T)$. If there exists a positive constant $C$ such that the Weyl tensor satisfies
$$\frac{|W_{g(t)}|_{g(t)}}{R_{g(t)}+c}\leq C$$
and $T< +\infty$, then
$$
\limsup_{t\to T}\left(\max_{M}R_{g(t)}\right)
= +\infty.$$
\end{theorem}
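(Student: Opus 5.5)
We argue by contradiction. Suppose $T<+\infty$ and $\max_M R_{g(t)}\le K$ on $[0,T)$ for some constant $K$. By Theorem \ref{thm2.2}, to reach a contradiction it suffices to show that $|{\rm Ric}_{g(t)}|_{g(t)}$ stays uniformly bounded on $M\times[0,T)$. Write ${\rm Ric}=\mathring{\rm Ric}+\frac{R}{n}g$. Since $R$ is bounded above by $K$ and below by $-c$ (from $R_{g(t)}+c>0$), the trace part is controlled, so everything reduces to bounding the traceless Ricci tensor $\mathring{\rm Ric}$. The approach follows Hamilton's pinching argument from \cite{Ham82}: we look for a bound of the form
\begin{align*}
|\mathring{\rm Ric}_{g(t)}|^{2}_{g(t)}\le C_{1}\,(R_{g(t)}+c)^{2-\delta}+C_{2}
\end{align*}
for some small $\delta\in(0,1)$. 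With $R+c\le K+c$, such a bound gives the desired Ricci bound at once.

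The first step is to compute the evolution of $|\mathring{\rm Ric}|^2$ under Ricci flow. It has the form
\begin{align*}
\partial_t|\mathring{\rm Ric}|^2=\Delta|\mathring{\rm Ric}|^2-2|\nabla\mathring{\rm Ric}|^2+4W(\mathring{\rm Ric},\mathring{\rm Ric})+(\text{cubic terms in }\mathring{\rm Ric}\text{ and }R),
\end{align*}
where the reaction terms come from the decomposition ${\rm Rm}=W+\frac{1}{n-2}\mathring{\rm Ric}\owedge g+\frac{R}{2n(n-1)}g\owedge g$. Alongside this we use $\partial_t R=\Delta R+2|{\rm Ric}|^2$, which gives $\partial_t(R+c)=\Delta(R+c)+2|\mathring{\rm Ric}|^2+\frac{2}{n}R^2$. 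Next we consider the quotient $f=|\mathring{\rm Ric}|^2/(R+c)^{\gamma}$ with $\gamma=2-\delta$, and compute
\begin{align*}
(\partial_t-\Delta)f\le \frac{2(\gamma-1)}{R+c}\langle\nabla R,\nabla f\rangle+\frac{1}{(R+c)^{\gamma}}\Big(4|W||\mathring{\rm Ric}|^2+C|\mathring{\rm Ric}|^3+C|R||\mathring{\rm Ric}|^2\Big)-\frac{2\gamma|\mathring{\rm Ric}|^4}{(R+c)^{\gamma+1}}.
\end{align*}
Here the gradient terms are absorbed in the standard way, using that $-2|\nabla\mathring{\rm Ric}|^2$ dominates the cross terms when $\gamma$ is close to $2$.

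The key step, and the main obstacle, is handling the reaction terms. The hypothesis $|W|\le C(R+c)$ turns the Weyl term into $\le C(R+c)\,|\mathring{\rm Ric}|^2/(R+c)^{\gamma}=C(R+c)f$. The good term $-2\gamma|\mathring{\rm Ric}|^4/(R+c)^{\gamma+1}=-2\gamma f^2(R+c)^{\gamma-1}$ has to beat both this and the cubic term $|\mathring{\rm Ric}|^3/(R+c)^\gamma=f^{3/2}(R+c)^{\gamma/2}$. When $f$ is large, the quadratic-in-$f$ negative term wins, because $f^2(R+c)^{1-\delta}$ dominates $f(R+c)$ and $f^{3/2}(R+c)^{1-\delta/2}$ once $f\ge C'(R+c)^{\delta}$. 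This is where the extra room from $\delta>0$ and the lower bound $R+c>0$ are used. The delicate points are exactly this balancing of powers and keeping the constants independent of $t$.

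Granting that, the maximum principle shows that $f$ stays bounded on $M\times[0,T)$ by a constant depending on $f(0)$, $C$, $c$, $n$ and the upper bound $K$. Hence $|\mathring{\rm Ric}|^2\le C''(K+c)^{2-\delta}$, so $|{\rm Ric}_{g(t)}|$ is uniformly bounded. This contradicts Theorem \ref{thm2.2}, and therefore $\limsup_{t\to T}\max_M R_{g(t)}=+\infty$.
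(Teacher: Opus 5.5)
Your proposal is correct and follows essentially the same route as Cao's argument described in the paper: a maximum-principle pinching estimate for the traceless Ricci tensor relative to $R_{g(t)}+c$, with the Weyl hypothesis absorbing the $W(\mathring{\rm Ric},\mathring{\rm Ric})$ term and the $-|\mathring{\rm Ric}|^{4}$ term from the evolution of $R$ winning, followed by \v{S}e\v{s}um's Theorem \ref{thm2.2}. Two small differences. First, the exponent $2-\delta$ is unnecessary: at exponent exactly $2$ every reaction term carries one factor of $R+c$, so the estimate closes without the upper bound $K$, which gives Cao's a priori pinching inequality. Second, the $R|\mathring{\rm Ric}|^2$ term should be handled either by its sign or by the uniform bound $R_{g(t)}+c\geq\min_M R_{g(0)}+c>0$, which holds because $\min_M R$ is nondecreasing.
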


\begin{remark}
By setting
    $$f=\frac{|W_{g(t)}|_{g(t)}^{2}}{(R_{g(t)}+c)^{2}},\quad h=\frac{|{\rm Ric}_{g(t)}|_{g(t)}^{2}}{(R_{g(t)}+c)^{2}}{,}$$
    {given an arbitrary positive number $\epsilon$, we choose a positive constant $C$ such that $\displaystyle \min_{M}R_{g(t)}+C\geq \epsilon$.} Cao's proof for Theorem \ref{thm2.7} implies that  for any $T^{\prime}\in(0,T)$, the inequality
    $$h\leq C_{1}+\frac{1}{\epsilon}\max_{M\times[0,T^{\prime}]}f^{\frac{1}{2}}$$
    holds on $M\times[0,T^{\prime}]$, where $C_{1}$ is a universal constant depending only on $M,g,\epsilon$, and $n$.
\end{remark}

    {Motivated by Cao's result, we make the following assumption:}   for any $T^{\prime}\in(0,T)$, the inequality
    \begin{align}\label{2.2}
        h\geq C_{n,\epsilon}\max_{M\times[0,T^{\prime}]}f^{\frac{1}{2}}-C_{2}
    \end{align}
    holds on $M\times[0,T^{\prime}]$, for some $\epsilon$ and universal constant $C_{2}$ and $C_{n,\epsilon}$ with
    $$
    \frac{1}{\epsilon}>C_{n,\epsilon}>\frac{1}{4}C(n), 
    $$
    $C(n)$ is a universal constant depending only on $n$.

{Based on the above assumption, the second author  proves that Conjecture \ref{conj2.5} is true.}

\begin{theorem}[\cite{Li24}]
Suppose $(M^{n},g(t))_{t\in [0,T)}$ is a solution to Ricci flow with $n\geq 3$, {$T$ is the maximal time}. Let $c$ be a positive constant such that $R_{g(t)}+c>0$ on $M\times[0,T)$. If there exists a positive constant $C(n)$, depending only on $n$, such that
$$\frac{|W_{g(t)}|_{g(t)}}{R_{g(t)}+c}\leq {C_{n,\epsilon}\frac{|{\rm Ric}_{g(t)}|^{2}_{g(t)}}{(R_{g(t)}+c)^{2}}}$$
and $T< +\infty$, then
$$
\limsup_{t\to T}\left(\max_{M}R_{g(t)}\right)
= +\infty.$$
When $n=4$, {we can take $C(4)=1$ and then any constant $C_{4,\epsilon}$ in $(1/4,1/\epsilon)$.}
\end{theorem}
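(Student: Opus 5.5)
We argue by contradiction, following the structure of Cao's proof of Theorem \ref{thm2.7}. Suppose $T<+\infty$ but $R_{g(t)}\le K$ on $M\times[0,T)$. By Theorem \ref{thm2.2} it suffices to prove a uniform bound on $|{\rm Ric}_{g(t)}|_{g(t)}$. Since $R+c>0$ and $R\le K$, it is enough to bound
$$h=\frac{|{\rm Ric}_{g(t)}|^2_{g(t)}}{(R_{g(t)}+c)^2}$$
uniformly on $M\times[0,T)$. The remark after Theorem \ref{thm2.7} gives, for every $T'<T$,
$$h\le C_1+\frac{1}{\epsilon}\max_{M\times[0,T']}f^{1/2}.$$
The task is therefore to control $\max f^{1/2}$ in terms of $h$ itself and then absorb.

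First, the hypothesis $|W|/(R+c)\le C_{n,\epsilon}h$ gives $f^{1/2}\le C_{n,\epsilon}h$ pointwise. Write $H(T')=\max_{M\times[0,T']}h$. Evaluating Cao's inequality at a point where $h$ attains $H(T')$ yields
$$H(T')\le C_1+\frac{C_{n,\epsilon}}{\epsilon}H(T').$$
If $C_{n,\epsilon}<\epsilon$ this absorbs immediately. However, the admissible range is $C_{n,\epsilon}<1/\epsilon$, so simple absorption is not enough. I would therefore redo Cao's maximum-principle computation rather than use the remark as a black box. Following Hamilton's pinching argument, one computes
$$\partial_t h\le \Delta h+\frac{2}{R+c}\langle\nabla R,\nabla h\rangle+\frac{1}{R+c}\Big(C(n)|W||{\rm Ric}|\,\text{-terms}\Big)-\frac{c\,(\dots)}{(R+c)^2}.$$
The Weyl contribution is bounded by $C(n)f^{1/2}h$ times $R+c$. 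The negative reaction term, coming from $\partial_t R=\Delta R+2|{\rm Ric}|^2$, is of size $-4h^2(R+c)$ up to lower-order terms. This is where the threshold $\tfrac14 C(n)$ should enter.

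At a maximum point of $h$, use $f^{1/2}\le C_{n,\epsilon}h$ together with assumption \eqref{2.2} in the form $\max f^{1/2}\le (h+C_2)/C_{n,\epsilon}$. The reaction terms then become a quadratic inequality in $h$ whose leading coefficient has the sign of $4C_{n,\epsilon}-C(n)$. The condition $C_{n,\epsilon}>\tfrac14 C(n)$ makes this leading coefficient favourable, so the maximum principle gives $H(T')\le C_3$ with $C_3$ independent of $T'$. The upper bound $C_{n,\epsilon}<1/\epsilon$ is what keeps the constants consistent with the remark's estimate. Letting $T'\to T$ gives $|{\rm Ric}|\le \sqrt{C_3}\,(K+c)$ on $[0,T)$, which contradicts Theorem \ref{thm2.2}. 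For $n=4$, the specific algebra of the Weyl term gives $C(4)=1$, and the same argument then works for any $C_{4,\epsilon}\in(1/4,1/\epsilon)$.

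\textbf{Main obstacle.} The hard step is the precise bookkeeping in the evolution inequality for $h$. One must verify that the Weyl term enters with constant exactly $C(n)$ against the good term $4h^2$. One must also check how \eqref{2.2} and the hypothesis combine to squeeze $f^{1/2}$ between multiples of $h$. I would first try to reproduce Cao's computation explicitly, tracking the constant $C(n)$ in $|\langle W*{\rm Ric},{\rm Ric}\rangle|\le C(n)|W||{\rm Ric}|^2$. After that, I would check the maximum-principle step at a point where $h$ attains its maximum.
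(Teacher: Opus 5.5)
Your overall route is the one the paper has in mind: run Cao's maximum-principle computation for $h=|{\rm Ric}|^2/(R+c)^2$, absorb the Weyl term (at most $C(n)f^{1/2}h(R+c)$) into the good term $-4h^2(R+c)$, and conclude from Theorem \ref{thm2.2}. The reduction from bounded $h$ and $R\le K$ to bounded Ricci is also fine.

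\textbf{The gap: which hypothesis does the absorption.} At the maximum point you say you use $f^{1/2}\le C_{n,\epsilon}h$ ``together with'' \eqref{2.2}. These two inequalities bound $f^{1/2}$ by $h$ with reciprocal constants, and only the second one does any work. With $f^{1/2}\le (h+C_2)/C_{n,\epsilon}$, the $h^2$-coefficient of the reaction (divided by $R+c$) is $C(n)/C_{n,\epsilon}-4$. This is negative exactly when $C_{n,\epsilon}>\tfrac14 C(n)$. With the printed hypothesis $f^{1/2}\le C_{n,\epsilon}h$ alone, the coefficient is instead $C(n)C_{n,\epsilon}-4$. That fails whenever $C(n)C_{n,\epsilon}\ge 4$, which the range $(\tfrac14 C(n),1/\epsilon)$ permits as soon as $\epsilon<C(n)/4$. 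The printed hypothesis also does not imply \eqref{2.2}.

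So, as written, you do not derive the conclusion from the theorem's displayed hypothesis. You derive it from \eqref{2.2}, or from the pointwise condition $|{\rm Ric}|^2\ge C_{n,\epsilon}(R+c)|W|$ in the remark after the theorem. The displayed inequality has $C_{n,\epsilon}$ on the wrong side compared with that remark. You should say explicitly that this pointwise condition is the hypothesis in force and drop the printed one. Once you do, several pieces become unnecessary:
\begin{itemize}
\item the detour through Cao's black-box estimate;
\item the attempted absorption $H\le C_1+\frac{C_{n,\epsilon}}{\epsilon}H$;
\item the upper bound $C_{n,\epsilon}<1/\epsilon$, which plays no role in your maximum-principle argument.
\end{itemize}

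\textbf{Remaining bookkeeping.} With that correction the argument closes, and the steps you flagged are routine:
\begin{itemize}
\item Since $(\partial_t-\triangle)R=2|{\rm Ric}|^2\ge 0$, $\min_M R(t)$ is nondecreasing. Hence $R+c\ge \min_M R(0)+c>0$ uniformly, and together with $R\le K$ this bounds $|R|/(R+c)$.
\item Decomposing ${\rm Rm}$, the non-Weyl part of $4{\rm Rm}({\rm Ric},{\rm Ric})/(R+c)^2$ is $(R+c)\,O(h^{3/2}+1)$. The worst piece is the ${\rm tr}\,{\rm Ric}^3$ term. This is dominated by $(C(n)/C_{n,\epsilon}-4)h^2(R+c)$ for large $h$.
\item The gradient terms are exactly $\frac{2}{R+c}\langle\nabla R,\nabla h\rangle$ minus a square.
\end{itemize}
The maximum principle then bounds $h$ independently of $T'$.

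\textbf{The case $n=4$.} The claim $C(4)=1$ is only asserted. It requires an explicit bound $|W_{ijkl}E_{ik}E_{jl}|\le \tfrac14|W||E|^2$ in dimension four, in the normalization you are using, and you do not supply it.
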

\begin{remark}        
    We observe that $\eqref{2.2}$ can be replaced by the following condition
    $$|{\rm Ric}_{g(t)}|_{g(t)}^{2}\geq C_{n,\epsilon}(R_{g(t)}+c)|W_{g(t)}|_{g(t)}$$
    along the Ricci flow. 
\end{remark}

%%%%%%%%%%%%%

%%%%%%%%%%%%%

%%%%%%%%%%%%%

%%%%%%%%%%%%%

%%%%%%%%%%%%%%%%%%%%%%%%%%%%%%%%%%%%%%%%%%%%%%%%%%%%%%%%%%%%%%%%%%%%%%%%%%%%%%
%{Partial results on Hamilton's conjecture: continued}
%%%%%%%%%%%%%%%%%%%%%%%%%%%%%%%%%%%%%%%%%%%%%%%%%%%%%%%%%%%%%%%%%%%%%%%%%%%%

Additionally, Buzano and Matteo \cite{BD23} also provided a partial result for Conjecture \ref{conj2.5}.
{\begin{theorem}[\cite{BD23}]
Suppose $(M,g(t))_{t\in [0,T)}$ is a solution to Ricci flow on a closed manifold $M$ of dimension $n<8$, $T$ is the maximal time. If $T< +\infty$ and the injectivity radius satisfies
$$
{\rm inj}(M,g(t))\geq \alpha\left(\max_{M\times[0,t]}
|{\rm Ric}_{g(s)}|_{g(s)}\right)^{-1/2}
$$
for a positive constant $\alpha$, then
$$\limsup_{t\to T}\left(\max_{M}R_{g(t)}\right)
= +\infty.$$
%Let $(M,g(t))$ be a Ricci flow on a closed manifold $M$ of dimension $n<8$, defined on $[0,T)$, {$T$ is the maximal time and}  $T<+\infty$. Assume that the scalar curvature is uniformly bounded, and  is bounded from below by
%Suppose $(M^{n},g(t))_{t\in [0,T)}$ is a solution to Ricci flow with $n\geq 3$. If there exists a constant $C$ such that 
%for some $\alpha>0$. Then the flow can be smoothly extended past time $T$.
\end{theorem}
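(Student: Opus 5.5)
We argue by contradiction. Suppose $T<+\infty$ is maximal but $R_{g(t)}\le K$ on $M\times[0,T)$. Since the minimum of $R$ is nondecreasing up to a controlled term along Ricci flow ($\partial_t R=\Delta R+2|{\rm Ric}|^2\ge\Delta R+\frac{2}{n}R^2$), we get $|R_{g(t)}|\le K'$ on $M\times[0,T)$. By Theorem \ref{thm2.2} it is enough to show that $|{\rm Ric}_{g(t)}|$ stays bounded. Suppose instead it does not. Then we can pick points $(x_i,t_i)$ with $t_i\to T$ and $Q_i:=|{\rm Ric}|(x_i,t_i)\to\infty$, with $Q_i$ comparable to $\max_{M\times[0,t_i]}|{\rm Ric}|$ (a standard point-picking). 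We rescale parabolically, setting $g_i(s)=Q_i\,g(t_i+s/Q_i)$. These flows have $|{\rm Ric}_{g_i}|\le C$ on backward parabolic regions of size tending to infinity, $|{\rm Ric}_{g_i}|(x_i,0)=1$, and $R_{g_i}\to0$ uniformly since $|R|\le K'$.

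The injectivity radius hypothesis is scale invariant in exactly the right form. It gives ${\rm inj}(M,g_i(0))\ge\alpha$ (up to the comparison constant) at the base point, and likewise at earlier rescaled times. Combined with the Ricci bound, this gives uniform $C^{1,\beta}$ harmonic coordinates (Anderson) on balls of fixed size. Here we use the key fact that along Ricci flow with bounded Ricci, the metric is equivalent on short time intervals, and that $\partial_t g=-2{\rm Ric}$ is bounded.

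Next comes the step I expect to be the main obstacle: upgrading to smooth convergence with only Ricci bounds, and no curvature bounds. My first attempt is to use the elliptic-parabolic structure of Ricci flow in harmonic coordinates, namely ${\rm Ric}=-\frac12 g^{kl}\partial_k\partial_l g+Q(g,\partial g)$. Together with the evolution equation, this makes $g_{ij}$ solve a parabolic system with bounded right-hand side. Parabolic $L^p$ and Schauder bootstrapping then yields local smooth bounds, perhaps after Shi-type local estimates once a $C^{1,\beta}$ bound on the metric gives an $L^p$ curvature bound. Using Hamilton compactness, we extract a smooth limit ancient flow $(M_\infty,g_\infty(s),x_\infty)$, $s\le0$, with $R\equiv0$, bounded Ricci, $|{\rm Ric}|(x_\infty,0)=1$ and positive injectivity radius.

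Finally we derive the contradiction. From $\partial_sR=\Delta R+2|{\rm Ric}|^2$ and $R\equiv0$ we obtain ${\rm Ric}\equiv0$, which contradicts $|{\rm Ric}|(x_\infty,0)=1$. This last step is immediate, so the whole difficulty lies in obtaining the compactness. The restriction $n<8$ presumably enters exactly there. It could come through an $\varepsilon$-regularity or codimension-four type estimate for Ricci-bounded spaces (Cheeger–Naber), used to control the curvature in $L^p$ for some $p>n/2$ so that the bootstrap closes. I would look for such an $L^p$ curvature estimate, e.g. $p=2<\ldots$, which fails unless $n/2<4$.
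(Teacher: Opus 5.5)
Your skeleton is sound: the lower bound on $R$, the reduction to bounded Ricci via Theorem \ref{thm2.2}, Ricci-scale point-picking at record times, the scale invariance of the injectivity radius hypothesis (which gives ${\rm inj}(g_i(s))\ge\alpha$ for all $s\le0$), and the final contradiction from $R_\infty\equiv0$ are all correct. But the step you flag as the main obstacle is left open, and the route you suggest for it fails as stated. Harmonic coordinates of $g_i(s_0)$ are not harmonic for $g_i(s)$ when $s\neq s_0$. Ricci flow is also only weakly parabolic, so the identity ${\rm Ric}=-\frac12 g^{kl}\partial_k\partial_l g+Q(g,\partial g)$ on a single slice does not make $g_{ij}$ solve a parabolic system; the uncontrolled second-order gauge terms reappear at other times. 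You would need a DeTurck or heat-coordinate gauge, and controlling that gauge is exactly what is in question. In addition, a $C^{1,\beta}$ bound alone gives no $L^p$ bound on ${\rm Rm}$, and Shi's local estimates need a pointwise curvature bound as input. So neither ingredient closes the argument.

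The missing idea is that Anderson's theorem gives more than $C^{1,\beta}$. Apply interior $L^p$ theory to $g^{kl}\partial_k\partial_l g_{ij}=-2R_{ij}+Q(g,\partial g)$ in harmonic coordinates of each slice. This gives $W^{2,p}$ control for every $p<\infty$, hence $\sup_s\int_B|{\rm Rm}_{g_i(s)}|^p\,dV_{g_i(s)}\le C(n,\alpha,p)$ on balls of a fixed size. Since $|\partial_s g_i|\le2$, the metrics on $[s-1,s]$ are uniformly equivalent, and the harmonic charts give a uniform Sobolev inequality. The function $u=|{\rm Rm}|^2$ satisfies $(\partial_s-\Delta)u\le C|{\rm Rm}|\,u$, with potential in $L^\infty_sL^p_x$ for some $p>n/2$. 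A parabolic Moser iteration then gives $|{\rm Rm}_{g_i}|\le C$, after which Shi's estimates and Hamilton's compactness theorem produce your limit flow. This step uses no dimensional restriction.

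Your closing speculation about where $n<8$ enters is therefore off. Cheeger--Naber type bounds under bounded Ricci control $|{\rm Rm}|$ in $L^p$ only for $p\le2$, which exceeds $n/2$ only when $n<4$; and under your hypothesis every $L^p$ bound comes for free from Anderson anyway. The paper gives no proof of this statement beyond citing \cite{BD23}. It attributes the restriction $n<8$ to Bamler's codimension-four convergence theory for flows with bounded scalar curvature \cite{Bam18}. That is an input your argument never uses and, once repaired as above, does not appear to need.
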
}

\begin{remark}
    The restriction for the dimension to manifolds less than $8$ is based on Bamler's work \cite{Bam18}. In fact, under the condition of boundedness of scalar curvature and finite $T$, Bamler \cite{Bam18} proved that there exists an open subset $\Sigma$
 of $M$ such that $g(t)$ converges in $C^{\infty}(\Sigma)$ to a Riemannian metric $g(T)$ on $\Sigma$ as $t\rightarrow T$, and the Hausdorff dimension of $M\backslash \Sigma$, with respect to some pseudo-length metric $d_{T}$ (i.e., the limit of the induced length metric 
$d_{t}$ of $g(t)$) on $M$, is not greater than $n-4$.
\end{remark}

In fact, the evolution equation for scalar curvature under the Ricci flow{:}
$$
\left(\partial_{t}-\triangle_{g(t)}\right)R_{g(t)}
=2|{\rm Ric}_{g(t)}|^{2}_{g(t)}{,}
$$
and the maximum principle imply that $R_{g(t)}$ must have a {lower} bound. Therefore, uniformly bounded scalar curvature corresponds to the condition $R_{g(t)}\leq C$ for some positive constant $C$.
In the case where $(M,g)$ is a closed $4$-dimensional Riemannian manifold, Simon \cite{Sim20} and Bamler{-}Zhang \cite{BZ17} independently established the following result:
\begin{theorem}[\cite{BZ17,Sim20}]
Suppose $(M^{4},g(t))_{t\in [0,T)}$ is a solution to Ricci flow with the maximal time $T$. If there exists a constant $c$ such that $R_{g(t)}\leq c$ and $T< +\infty$, then there exists a constant $C$, such that
    $$
\int_{M}|{\rm Rm}_{g(t)}|^{2}_{g(t)}dV_{g(t)}
\leq C.
$$
\end{theorem}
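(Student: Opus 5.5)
We need to bound $\int_M|{\rm Rm}|^2\,dV$ on a closed $4$-manifold along Ricci flow with $R\le c$ and $T<+\infty$. The plan has three steps: reduce the full curvature norm to the Ricci norm using Chern--Gauss--Bonnet, then bound the spacetime integral of the Ricci norm using the scalar curvature equation, and finally upgrade this to a pointwise-in-time bound.

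\textbf{Step 1: reduction to Ricci curvature.} In dimension $4$ the Chern--Gauss--Bonnet formula reads
$$8\pi^{2}\chi(M)=\int_{M}\Big(|W|^{2}-\tfrac12|{\rm Ric}^{\circ}|^{2}+\tfrac{1}{24}R^{2}\Big)dV ,$$
with normalisation $|{\rm Rm}|^{2}=|W|^{2}+2|{\rm Ric}^{\circ}|^{2}+\tfrac16R^{2}$ (up to convention-dependent constants). Here ${\rm Ric}^{\circ}$ is the traceless Ricci tensor. Eliminating $|W|^2$ gives
$$\int_M|{\rm Rm}|^2\,dV\le C\chi(M)+C\int_M|{\rm Ric}|^2\,dV+C\int_M R^2\,dV .$$
By the maximum principle, $R$ is bounded below along the flow, so $|R|\le C$. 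Since $\partial_t{\rm Vol}=-\int_M R\,dV$, the volume is bounded above and below on $[0,T)$. Hence the $R^2$ term is harmless, and everything reduces to bounding $\int_M|{\rm Ric}_{g(t)}|^2\,dV_{g(t)}$ uniformly in $t$.

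\textbf{Step 2: spacetime $L^2$ bound on Ricci.} Integrate the evolution equation $(\partial_t-\Delta)R=2|{\rm Ric}|^2$ over $M$, using $\partial_t dV=-R\,dV$:
$$\frac{d}{dt}\int_M R\,dV=\int_M\big(2|{\rm Ric}|^2-R^2\big)dV .$$
Because $|R|\le C$, the volume is controlled and $T<\infty$, integrating in time gives
$$\int_0^T\!\!\int_M|{\rm Ric}|^2\,dV\,dt\le C .$$

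\textbf{Step 3: bound at each time.} Step 2 only gives control on average in time, so one more estimate is needed. Using the Ricci evolution $\partial_t{\rm Ric}=\Delta{\rm Ric}+{\rm Rm}*{\rm Ric}$, the quantity $\int_M|{\rm Ric}|^2\,dV$ has a derivative containing the cubic term $\int_M {\rm Rm}*{\rm Ric}*{\rm Ric}$, which cannot be controlled directly. I plan to follow the route of Bamler--Zhang. Perelman's $\kappa$-noncollapsing, together with $|R|\le C$ on a finite time interval, yields distance distortion estimates. It also yields a backward pseudolocality (parabolic regularity) statement: at scale $r$ around any point there is a region where $|{\rm Rm}|\le r^{-2}$, and such regions cover all but a set that is small in measure. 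This should give a local estimate of the form
$$\int_{M}|{\rm Ric}|^{2}\,dV\le C\,r^{-2}\int_{t-r^2}^{t}\int_M|{\rm Ric}|^2\,dV\,ds+C .$$
The key input is an integral bound on $|{\rm Ric}|^2$ along a heat-kernel cutoff, obtained from the integrated $R$-equation with a cutoff function; the bounded scalar curvature controls the time derivative of that cutoff. Combined with Step 2 and Step 1, this gives the required bound $\int_M|{\rm Rm}|^2\,dV\le C$.

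\textbf{Main obstacle.} The hard part is Step 3, that is, turning the spacetime estimate into a bound at each fixed time. I expect this to require genuinely non-elementary tools: the Bamler--Zhang heat-kernel and $\epsilon$-regularity machinery, or Simon's local estimates. I would try first to prove a weighted local version of Step 2 with cutoffs adapted to the heat kernel.
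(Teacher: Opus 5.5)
Your Steps~1 and~2 are correct. In dimension four, Chern--Gauss--Bonnet gives $\int_M|{\rm Rm}|^2dV=32\pi^2\chi(M)+\int_M(4|{\rm Ric}|^2-R^2)\,dV$. Integrating $(\partial_t-\triangle)R=2|{\rm Ric}|^2$ gives $\int_0^T\!\int_M|{\rm Ric}|^2dV\,dt\le C$. However, $|{\rm Ric}|^2\le C|{\rm Rm}|^2$ pointwise, so the theorem is \emph{equivalent} to the uniform time-slice bound $\sup_{t<T}\int_M|{\rm Ric}_{g(t)}|^2dV_{g(t)}<+\infty$. Steps~1--2 therefore only reformulate the problem and prove its easy time-averaged version. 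The whole content of the theorem sits in Step~3, which you leave as a plan. The survey has no proof of its own; it cites Bamler--Zhang and Simon, and in both works this passage from spacetime to time-slice control is the substance. Simon, for instance, obtains it through integral estimates special to dimension four rather than through pointwise regularity.

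The mechanism you sketch for Step~3 does not close. Backward pseudolocality is a conditional statement: \emph{if} $|{\rm Rm}|\le r^{-2}$ on $B_{g(t)}(x,r)$, then $|{\rm Rm}|\le Cr^{-2}$ on a backward parabolic neighbourhood of $(x,t)$ of size $\epsilon r$. It gives nothing at points whose curvature scale $r_{\rm Rm}(x,t)$ is below $r$, and it does not say that such points have small measure. On the good set your averaging inequality can be proved. The heat-kernel representation of the $R$-equation bounds $\int\!\int K\,|{\rm Ric}|^2$, which gives $\int_{P^-(x,t,\rho)}|{\rm Ric}|^2\le C\rho^{4}$ there. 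Moser iteration on a backward neighbourhood of controlled geometry then even gives $|{\rm Ric}|(x,t)\le C\,r_{\rm Rm}(x,t)^{-1}$. On the bad set, though, $|{\rm Ric}|(\cdot,t)$ is a priori unbounded, so ``small measure'' controls nothing. Summing over dyadic curvature scales needs a quantitative estimate such as ${\rm Vol}_{g(t)}\{r_{\rm Rm}(\cdot,t)<s\}\le Cs^{2+\delta}$, i.e.\ uniform integrability of $r_{\rm Rm}^{-2}$ on time slices. Nothing in your sketch produces this. The standard four-dimensional $\epsilon$-regularity argument that would give such a bound already uses a time-slice $L^2$ bound on ${\rm Rm}$, which is what you are trying to prove. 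Finally, for fixed $r$ your displayed inequality combined with Step~2 is equivalent to the theorem itself, so as written the proposal reduces the statement to itself. The missing idea is an actual control of $|{\rm Ric}|^2(\cdot,t)$ on the high-curvature region at a fixed time.
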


%%%%%%%%%%%%%%%%%%%%%%%%%%%%%%%%%%%%%%%%%%%%%%%%%%%%%%%%%%%%%%%%%%%%%%%%%%%%%%
%%%%%%%%%%%%%%%%%%%%%%%%%%%%%%%%%%%%%%%%%%%%%%%%%%%%%%%%%%%%%%%%%%%%%%%%%%%%%%
\section{A flow on cscK metrics in K\"ahler geometry}\label{sec3}
%%%%%%%%%%%%%%%%%%%%%%%%%%%%%%%%%%%%%%%%%%%%%%%%%%%%%%%%%%%%%%%%%%%%%%%%%%%%%%
%%%%%%%%%%%%%%%%%%%%%%%%%%%%%%%%%%%%%%%%%%%%%%%%%%%%%%%%%%%%%%%%%%%%%%%%%%%%%%

{Let $(\mathcal{X}, \omega)$ be a closed K\"ahler manifold of complex dimension $n$.} 
% To investigate the existence problem of constant scalar curvature K\"ahler (cscK) metrics, Li, Yuan, and Zhang \cite{LYZ20} introduced a new parabolic flow:
In this section, we continue study the K\"ahler-Ricci flow coupled with $(1,1)$-forms:

\begin{equation}\label{2}
  \left \{
       \begin{array}{rl}
          \partial_{t}\omega(t)&=-\rho(\omega(t))-\omega(t)+\alpha(t), \\
          \partial_{t}\alpha(t)&=\overline{\Box}_{\omega(t)}\alpha(t), \\
           (\omega(0),\alpha(0))&=(\omega,\alpha),
       \end{array}
  \right.
\end{equation}
%where $\alpha$ is {a} closed real $(1,1)$-form on $\mathcal{X}$, $\rho(\cdot)$ is the Ricci form of $\omega(t)$, $\overline{\Box}_{\omega(t)}$ is the complex Hodge-Laplace operator.
%This flow is a modified K\"ahler-Ricci flow coupled with a heat flow for $(1,1)$-forms, whose stationary solution is a cscK metric, coupled with a harmonic $(1,1)$-form. 

Taking the cohomology class of (\ref{2}), we have
\begin{equation*}
\frac{d}{dt}[\omega(t)]=-2\pi c_{1}(\mathcal{X})
-[\omega(t)]+[\alpha], \ \ \ \frac{d}{dt}[\alpha(t)]=0,
\end{equation*}
which implies
\begin{equation*}
[\omega(t)]+2\pi c_{1}(\mathcal{X})-[\alpha]
=e^{-t}\left([\omega]+2\pi c_{1}(\mathcal{X})-[\alpha]\right), \ \ \
[\alpha(t)]=[\alpha].
\end{equation*}
To get a scalar calculus, we should impose a topological condition:
{\begin{equation}
\omega\in-2\pi c_{1}(\mathcal{X})+[\alpha].\label{5}
\end{equation}}
{Suppose that $-2\pi c_{1}(\mathcal{X})+[\alpha]\in\mathcal{K}_{\mathcal{X}}$, where $\mathcal{K}_{\mathcal{X}}$ is the K\"ahler cone of $\mathcal{X}$. Assume that $\omega\in-2\pi c_{1}(\mathcal{X})+[\alpha]$, one can show that the solution $\omega(t)$ also
lies in the class $-2\pi c_{1}(\mathcal{X})+[\alpha]$.}

Under (\ref{5}), one has
\begin{equation*}
\alpha(t)=\alpha+\sqrt{-1}
\partial\bar{\partial}f(t), \ \omega(t)=\omega+\sqrt{-1}\partial\bar{\partial}
\varphi(t), \ \omega=\alpha+\sqrt{-1}\partial\bar{\partial}
\ln\Omega
\end{equation*}
where $f(t), \varphi(t)\in C^{\infty}(\mathcal{X},\mathbb{R})$ and $\Omega$
is a smooth volume form. Then flow (\ref{2}) is equivalent to the parabolic {complex Monge-Amp\`ere} equation coupled with a heat equation:

\begin{equation}
  \left \{
       \begin{array}{rl}
          \partial_{t}\varphi(t)&=\displaystyle{\ln\frac{(\omega+\sqrt{-1}
\partial\bar{\partial}\varphi(t))^{n}}{\Omega}
-\varphi(t)+f(t)},\nonumber \\
          \partial_{t}f(t)&=\Delta_{\omega(t)}f(t)
+{\rm tr}_{\omega(t)}\alpha \ \ = \ \ {\rm tr}_{\omega(t)}\alpha(t),
\\
          (\varphi(0),f(0))&=(0,0){.}
       \end{array}
  \right.
\end{equation}

Note that if $c_{1}(\mathcal{X})<0$ and $\alpha=0$, then the flow (\ref{2}) reduces to the normalized K\"ahler-Ricci flow on $\mathcal{X}$, which converges exponentially to the negative K\"ahler-Einstein metric coupled with $\alpha_{\infty}=0$. For long-time existence result, Li,Yuan and Zhang \cite{LYZ20} established the following result using Shi-type estimates.

\begin{theorem}[\cite{LYZ20}]\label{thm3.1}
    Suppose that $(\mathcal{X},\omega)$ is a closed K\"ahler manifold and $\alpha$ is a closed nonnegative $(1,1)$-form such that
    \begin{align}\label{3.3}
        \omega\in-2\pi c_{1}(\mathcal{X})+[\alpha]{.}
    \end{align}
    Let $(\omega(t),\alpha(t))$ be the solution to the flow (\ref{2}) on the maximal time interval $[0,T)$ for $T< +\infty$ with the initial condition $(\omega,\alpha)$. Then
    $$\limsup_{t\to T}\left(\max_{\mathcal{X}}|{\rm Rm}_{\omega(t)}|_{\omega(t)}\right)=+\infty{.}$$
\end{theorem}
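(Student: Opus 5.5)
We argue by contradiction, following Hamilton's strategy for Theorem \ref{thm2.1}. Suppose $T<+\infty$ and $\sup_{\mathcal{X}\times[0,T)}|{\rm Rm}_{\omega(t)}|_{\omega(t)}\leq K$. We will show that $(\omega(t),\alpha(t))$ converges smoothly as $t\to T$ to a pair $(\omega(T),\alpha(T))$ with $\omega(T)$ Kähler and $\alpha(T)$ a closed $(1,1)$-form. Restarting the flow from $(\omega(T),\alpha(T))$ by the short-time existence theory then contradicts the maximality of $T$. The cohomological condition \eqref{3.3} is preserved, so we may work throughout with the scalar system for $(\varphi,f)$ stated above.

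\textbf{Step 1: metric equivalence and control of $\alpha$.} From $|{\rm Rm}|\leq K$ we get $|\rho(\omega(t))|_{\omega(t)}\leq CK$. The next step is to bound $\alpha(t)$ in terms of $\omega(t)$. Since $\alpha\geq 0$ and the evolution $\partial_t\alpha=\overline{\Box}\alpha=\Delta\alpha+{\rm Rm}*\alpha$ on $(1,1)$-forms is a Lichnerowicz-type heat equation, the maximum principle gives two things. First, nonnegativity of $\alpha(t)$ is preserved. Second, applying the maximum principle to $|\alpha|^2_{\omega(t)}$ (or to ${\rm tr}_{\omega(t)}\alpha(t)$), whose evolution involves only the curvature terms $\rho\ast\alpha\ast\alpha$ and $-\omega+\alpha$, gives $\partial_t|\alpha|^2\leq\Delta|\alpha|^2+C(K)|\alpha|^2+C|\alpha|^3$. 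The cubic term comes from the metric evolution $\partial_t\omega\ni\alpha$.

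To avoid that cubic term I would instead use ${\rm tr}_{\omega}\alpha$ together with $\alpha\geq0$: then $|\alpha|\leq{\rm tr}_\omega\alpha$. Moreover $\partial_t f={\rm tr}_{\omega(t)}\alpha(t)$, and the trace evolves by a heat equation whose reaction terms are bounded by $C(K)\,{\rm tr}\alpha+|\alpha|^2$. The term $-\langle\alpha,\alpha\rangle$ coming from $\partial_t\omega^{-1}$ has a favorable sign. This should give ${\rm tr}_{\omega(t)}\alpha(t)\leq C e^{CT}$. It follows that $|\partial_t\omega|_{\omega(t)}\leq C$ on $[0,T)$, so the metrics $\omega(t)$ are uniformly equivalent to $\omega$ and converge in $C^0$ to a positive definite continuous limit.

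\textbf{Step 2: Shi-type derivative estimates.} Next we derive Bernstein–Shi estimates for the coupled system. Consider the quantities $|\nabla^k{\rm Rm}|^2$ and $|\nabla^k\alpha|^2$. Their evolution is of the form
$$\partial_t|\nabla^k{\rm Rm}|^2\leq\Delta|\nabla^k{\rm Rm}|^2-2|\nabla^{k+1}{\rm Rm}|^2+\sum_{i+j=k}\nabla^i{\rm Rm}*\nabla^j{\rm Rm}*\nabla^k{\rm Rm}+\nabla^{k+2}\alpha*\nabla^k{\rm Rm}+\cdots,$$
where the $\nabla^{k+2}\alpha$ term arises because $\partial_t\omega$ contains $\alpha$. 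This loss of two derivatives is handled by a combined quantity: the heat equation for $\alpha$ supplies the good term $-|\nabla^{k+3}\alpha|^2$ in the evolution of $|\nabla^{k+2}\alpha|^2$. So for each $k$ I would apply the maximum principle to
$$F_k=t^{k}\bigl(|\nabla^k{\rm Rm}|^2+|\nabla^{k+2}\alpha|^2\bigr)+A\bigl(|\nabla^{k-1}{\rm Rm}|^2+|\nabla^{k+1}\alpha|^2\bigr)$$
with $A$ large, and prove bounds by induction on $k$. This yields uniform $C^k$ bounds on ${\rm Rm}$ and $\alpha$ on $[T/2,T)$.

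\textbf{Main obstacle.} Step 2 is the main difficulty. The coupling term must be bounded, and the base case needs $|\nabla\alpha|$ and $|\nabla^2\alpha|$ bounded a priori. I would obtain these from a first Shi step for $\alpha$ alone, using the bounds ${\rm Rm}\leq K$ and $|\alpha|\leq C$ from Step 1.

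\textbf{Step 3: smooth limit and contradiction.} With uniform bounds on $\nabla^k\partial_t\omega$ and $\nabla^k\partial_t\alpha$, integrating in $t$ shows that $\omega(t)\to\omega(T)$ and $\alpha(t)\to\alpha(T)$ in $C^\infty$. Here $\omega(T)$ is Kähler, by the uniform equivalence of Step 1, and $\alpha(T)$ is closed and nonnegative. The class condition \eqref{3.3} also passes to the limit. Short-time existence for \eqref{2} starting from $(\omega(T),\alpha(T))$ then extends the flow beyond $T$, which contradicts the maximality of $T$.
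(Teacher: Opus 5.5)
Your overall plan (contradiction, control of $\alpha(t)$, Shi-type derivative estimates, smooth limit, restart) is the route the paper attributes to \cite{LYZ20}, but Step 1 rests on a false claim: nonnegativity of $\alpha(t)$ is not preserved. Since $\partial_t\alpha(t)=\sqrt{-1}\partial\bar\partial\,{\rm tr}_{\omega(t)}\alpha(t)$, the reaction term $R_{i\bar jk\bar l}\alpha_{l\bar k}$ in the associated Lichnerowicz equation has a sign at a null eigenvector only under a curvature sign condition such as nonnegative bisectional curvature. The two-sided bound $|{\rm Rm}|\le K$ gives no such sign. Concretely, take $\mathcal{X}=Y\times Z$ with $Y,Z$ compact curves of genus at least $2$, let $\alpha=\pi_Y^{*}\omega_Y$, and let $\omega$ be a generic non-product K\"ahler metric in the K\"ahler class $[\alpha]-2\pi c_1(\mathcal{X})$, so that \eqref{3.3} holds. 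For $v$ tangent to a fibre $\{y\}\times Z$ one has $\alpha(v,\bar v)\equiv 0$. On the other hand, $\partial_t\alpha(v,\bar v)|_{t=0}$ is the Levi form of the restriction of ${\rm tr}_{\omega}\alpha$ to that compact fibre. For such $\omega$ this restriction is nonconstant, so its Levi form is negative somewhere, and $\alpha(t)$ immediately acquires negative eigenvalues. Hence $|\alpha(t)|\le{\rm tr}_{\omega(t)}\alpha(t)$ is unavailable, and so is the nonnegativity of $\alpha(T)$ you claim in Step 3.

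What survives, with no sign assumption, is the upper bound on the trace: $(\partial_t-\Delta){\rm tr}_{\omega(t)}\alpha(t)=\langle\rho,\alpha(t)\rangle+{\rm tr}_{\omega(t)}\alpha(t)-|\alpha(t)|^2$, and $-|\alpha|^2$ absorbs $\langle\rho,\alpha\rangle$. There is no matching lower bound. The smallest eigenvalue $\mu$ of $\alpha(t)$ satisfies only $(\partial_t-\Delta)\mu\ge -C(1+|\mu|)-\mu^2$, where $-\mu^2$ comes exactly from $\partial_t\omega\ni\alpha$. Comparison with this Riccati inequality controls $\mu$ only for a short time, not on an arbitrary $[0,T)$. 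Without a bound on $|\alpha(t)|_{\omega(t)}$ you get neither uniform equivalence of the metrics nor Shi estimates with bounded coefficients. This is the point where $\alpha\ge 0$ and \eqref{3.3} have to enter, through the scalar system; for instance, $\partial_t f\ge\Delta_{\omega(t)}f$ gives $f\ge 0$. Compare \cite{LY21}, whose blow-up criterion without \eqref{3.3} explicitly includes $|\alpha(t)|_{\omega(t)}$. Your proposal never uses these hypotheses at this step.

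A secondary issue is that the pairing in $F_k$ does not close. Differentiating $\partial_t\alpha=\Delta\alpha+{\rm Rm}*\alpha$ puts $\nabla^{k+2}{\rm Rm}*\alpha*\nabla^{k+2}\alpha$ into the evolution of $|\nabla^{k+2}\alpha|^2$, while $F_k$ only supplies the good term $-|\nabla^{k+1}{\rm Rm}|^2$. For the same reason, a Shi step for $\alpha$ alone runs into $\nabla{\rm Rm}*\alpha*\nabla\alpha$. Pairing $\nabla^k{\rm Rm}$ with $\nabla^{k+1}\alpha$ does close: the sources $\nabla^{k+2}\alpha*\nabla^k{\rm Rm}$ and $\nabla^{k+1}{\rm Rm}*\alpha*\nabla^{k+1}\alpha$ are absorbed by $-|\nabla^{k+2}\alpha|^2$ and $-|\nabla^{k+1}{\rm Rm}|^2$. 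That pairing needs only $|\alpha|\le C$ to start.
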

 By employing a blow-up argument and the Cheeger{-}Gromov compactness theorem, they also established {the following.}
 \begin{theorem}[\cite{LYZ20}]
      Assume that $\alpha$ is a closed nonnegative $(1,1)$-form such that
    $$\omega\in-2\pi c_{1}(\mathcal{X})+[\alpha]{.}$$
    Let $(\omega(t),\alpha(t))$ be the solution to the flow (\ref{2}) on $[0,T)$ for $T< +\infty$ with the initial condition $(\omega,\alpha)$. Suppose that the Ricci curvature of $\omega(t)$ and $|\alpha(t)|_{\omega(t)}$ are uniformly bounded on $[0,T)$. Then the solution $(\omega(t),\alpha(t))$ can be extended past
    time $T$.
 \end{theorem}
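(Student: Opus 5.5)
Here is the plan. We argue by contradiction, following the Ricci-flow template of \v{S}e\v{s}um: assume $T<+\infty$, $|{\rm Ric}_{\omega(t)}|_{\omega(t)}\le K$ and $|\alpha(t)|_{\omega(t)}\le K$ on $[0,T)$, and show that $|{\rm Rm}_{\omega(t)}|_{\omega(t)}$ stays bounded. Theorem \ref{thm3.1} then lets the flow be extended past $T$.

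\emph{Preliminary bounds.} The evolution equation is
\begin{align*}
\partial_t\omega=-\rho(\omega)-\omega+\alpha .
\end{align*}
Under the assumed bounds its right-hand side is bounded in $\omega(t)$-norm by $CK$. Hence the metrics $\omega(t)$ are uniformly equivalent on $[0,T)$ (since $T<\infty$), and the volume form and diameter are controlled. These bounds also give a uniform positive lower bound on the volume of unit balls, i.e. noncollapsing at all scales $\le 1$. This follows from uniform equivalence to $\omega(0)$ together with the diameter bound via Bishop--Gromov, using Ric bounded below. A Perelman-type entropy is not needed.

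\emph{Blow-up.} Suppose $\sup|{\rm Rm}|=+\infty$. Pick $(x_i,t_i)$ with $t_i\to T$ and $Q_i=|{\rm Rm}|(x_i,t_i)\approx\max_{\mathcal X\times[0,t_i]}|{\rm Rm}|\to\infty$. Rescale:
\begin{align*}
\omega_i(s)=Q_i\,\omega(t_i+s/Q_i),\qquad \alpha_i(s)=Q_i\,\alpha(t_i+s/Q_i),\qquad s\in[-Q_it_i,0].
\end{align*}
The rescaled pair satisfies
\begin{align*}
\partial_s\omega_i=-\rho(\omega_i)-Q_i^{-1}\omega_i+\alpha_i .
\end{align*}
Here $|\alpha_i|_{\omega_i}=|\alpha|_\omega\le K$ (norms of $(1,1)$-forms are scale-invariant in this sense), but in the equation the term $\alpha_i$ enters at scale $Q_i$. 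To handle this, I would instead rescale so that the flow is correctly normalized: write $\partial_s\omega_i=Q_i^{-1}(\ldots)$ in the original tensor sizes. Then $|{\rm Ric}_{\omega_i}|_{\omega_i}\le K/Q_i\to0$, $|{\rm Rm}_{\omega_i}|\le 1$ with equality at $(x_i,0)$, and $\partial_s\omega_i\to0$ in norm. Shi-type derivative estimates for this coupled system, as used in \cite{LYZ20} for Theorem \ref{thm3.1}, control all derivatives of ${\rm Rm}_{\omega_i}$ and $\alpha_i$. Combined with the noncollapsing, Cheeger--Gromov--Hamilton compactness gives a limit $(\mathcal X_\infty,\omega_\infty,x_\infty)$. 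This limit is a complete, noncompact (the diameter blows up) static, Ricci-flat K\"ahler manifold with $|{\rm Rm}|(x_\infty)=1$, maximal volume growth, and ${\rm Rm}$ bounded.

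\emph{Contradiction.} I would use that the curvature of a Ricci-flat metric is controlled by an $L^{n}$-type (real-dimension-$2n/2$) integral of ${\rm Rm}$. Uniform equivalence of the metrics plus the Chern--Weil/Gauss--Bonnet-type bound from the Ricci bound give
\begin{align*}
\int_{\mathcal X}|{\rm Rm}_{\omega(t)}|^{n}\,dV\le C .
\end{align*}
This is scale-invariant in real dimension $2n$. It passes to the limit and yields $\int|{\rm Rm}_{\omega_\infty}|^{n}<\infty$ on a Ricci-flat manifold with Euclidean volume growth. Then Bando--Kasue--Nakajima's $\epsilon$-regularity forces curvature decay, and a pointwise-concentration argument at $x_\infty$ gives a contradiction. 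More simply, following \v{S}e\v{s}um, one can use that a Ricci-flat blow-up limit obtained from a flow with bounded Ricci is actually flat near the basepoint, by evolution of $|{\rm Rm}|^2$. This contradicts $|{\rm Rm}|(x_\infty)=1$.

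The main obstacle is the Shi-type estimates in the blow-up that carry the coupling term $\alpha$, together with justifying the integral curvature bound in real dimension $2n>4$. The Chern--Weil route only controls $\int|{\rm Rm}|^2$ directly when $n=2$. For $n>2$, I would first try \v{S}e\v{s}um's direct route: bound ${\rm Rm}$ via the evolution inequality $(\partial_t-\Delta)|{\rm Rm}|\le C|{\rm Rm}|^2+\ldots$ together with a Moser iteration using the Ricci bound and noncollapsing.
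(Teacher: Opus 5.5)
Your setup is fine: uniform equivalence of $\omega(t)$, noncollapsing inherited from $\omega(0)$, blow-up at almost-maximal curvature points, Shi-type estimates, and a complete static Ricci-flat K\"ahler limit with $|{\rm Rm}_\infty|(x_\infty)=1$. This is the blow-up/Cheeger--Gromov route the paper attributes to \cite{LYZ20}. Your rescaling of $\alpha$ is muddled, but with $\alpha_i(s)=\alpha(t_i+s/Q_i)$ one gets $|\alpha_i|_{\omega_i}\le K/Q_i\to0$, which is what you end up using.

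The gap is the final contradiction: none of your three routes produces one. What you keep about the limit is Ricci-flat K\"ahler, bounded curvature, and volume ratio bounded below by some $c>0$. That is not contradictory. Calabi's complete Ricci-flat K\"ahler metric on the total space of $K_{\mathbb{CP}^{n-1}}$ (Eguchi--Hanson when $n=2$) has all these properties, is not flat, and even satisfies $\int|{\rm Rm}|^{n}<\infty$. Consequences for each route:
\begin{itemize}
\item An $L^{n}$ bound, which is unavailable for $n>2$ anyway as you note, would pass to the limit with no contradiction.
\item No local identity from the evolution of $|{\rm Rm}|^2$ can force flatness, since a static Ricci-flat metric satisfies it automatically. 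That is also not \v{S}e\v{s}um's argument.
\item The Moser-iteration fallback needs an a priori bound on $\int|{\rm Rm}|^p$ for some $p>n$ (real dimension $2n$). This does not follow from $(\partial_t-\Delta)|{\rm Rm}|\le C|{\rm Rm}|^2+\cdots$. Getting it requires a Kotschwar--Munteanu--Wang-type integration by parts, i.e.\ the different proof of \cite{LY21}, which you do not supply.
\end{itemize}

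The missing idea is that the volume ratio of the limit is \emph{exactly} Euclidean. Since $|\partial_t\omega|_{\omega(t)}\le C$ and $T<+\infty$, $\omega(t)$ converges uniformly to a continuous positive $(1,1)$-form $\omega(T)$, with $e^{-C(T-t)}\omega(T)\le\omega(t)\le e^{C(T-t)}\omega(T)$. Uniform continuity of $\omega(T)$ in finitely many charts then gives, for every $\epsilon>0$, constants $r_\epsilon,\delta_\epsilon>0$ such that
\begin{align*}
{\rm Vol}_{\omega(t)}\bigl(B_{\omega(t)}(x,r)\bigr)\ge(1-\epsilon)\,v_{2n}\,r^{2n}\quad\text{for all }x\in\mathcal{X},\ r\le r_\epsilon,\ t\in[T-\delta_\epsilon,T),
\end{align*}
where $v_{2n}$ is the volume of the Euclidean unit ball. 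This inequality is scale invariant. After rescaling by $Q_i$ it holds for $r\le r_\epsilon\sqrt{Q_i}\to\infty$, so in the limit ${\rm Vol}(B(x_\infty,r))\ge v_{2n}r^{2n}$ for all $r>0$. Bishop--Gromov with ${\rm Ric}\equiv0$ gives the reverse inequality, and its equality case makes the limit isometric to flat $\mathbb{C}^n$. This contradicts $|{\rm Rm}_\infty|(x_\infty)=1$.

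This rigidity step is what closes \v{S}e\v{s}um's argument. Your ``maximal volume growth'' has to be upgraded to it.
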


In analogy with the Ricci flow, a natural goal is to determine the optimal condition under which flow (\ref{2}) can be extended. Toward this end, the authors proposed the following conjecture:
\begin{conj}
Under the same topological condition in Theorem \ref{thm3.1}, the flow (\ref{2}) exists for all time as along
as $R_{\omega(t)}$ and ${\rm tr}_{\omega(t)}\alpha(t)$ are
uniformly bounded.
\end{conj}

\begin{remark}
    For the Calabi flow, Chen and Cheng \cite{CC21A, CC21B, CC18} made a breakthrough on this conjecture using the elliptic approach; however, the parabolic approach remains open.
\end{remark}

Taking the cohomology classes on both sides of (\ref{2}), we have
%$$
%\frac{d}{dt}[\omega(t)]=-2\pi c_{1}(\mathcal{X})-[\omega(t)]
%+[\alpha(t)], \ \ \ [\alpha(t)]=[\alpha].
%$$
%Hence
%From $\eqref{3.2}$, we have
$$
[\omega(t)]=(1-e^{-t})\left([\alpha]-[\omega]-2\pi c_{1}(
\mathcal{X})
\right)+[\omega].
$$
Motivated by K\"ahler-Ricci flow, the authors gave the following conjecture:
\begin{conj}\label{conj3.5}
Let $(\omega(t),\alpha(t))$ be the solution to the flow (\ref{2}) on the maximal time interval $[0,T)$. The maximal time $T$ can be characterized as
$$
T=\sup\left\{t\in(0,+\infty]:
(1-e^{-t})([\alpha]-[\omega]-2\pi c_{1}(\mathcal{X}))+[\omega]>0\right\}.
$$
In particular,
\begin{itemize}
    \item[(i)] If ${[\omega]\leq[\alpha]-2\pi c_{1}(\mathcal{X})}$, then time $T$ satisfies $T=+\infty$.
    \item[(ii)] If ${[\omega]>[\alpha]-2\pi c_{1}(\mathcal{X})} $ \ and \ $[\alpha]\geq 2\pi c_{1}(\mathcal{X})$, then time $T$ satisfies $T=+\infty$.
     \item[(iii)] If ${[\omega]>[\alpha]-2\pi c_{1}(\mathcal{X})} $ \ and \ $[\alpha]< 2\pi c_{1}(\mathcal{X})$, then time $T$ satisfies
     $$T
=\ln\left(1+\frac{[\omega]}{2\pi c_{1}(\mathcal{X})-[\alpha]}\right).$$
\end{itemize}
\end{conj}
\begin{remark}
    When $\alpha(t)\equiv0$, flow (\ref{2}) will reduce to the K\"ahler-Ricci flow. For this case, Conjecture \ref{conj3.5} has been proved by  Cao \cite{Cao85} for the first case $(i)$, Tsuji \cite{Ts88} for the second case $(ii)$, Tian and Zhang \cite{TZ06} for the third case $(iii)$.
\end{remark}

Using the local curvature estimate, the second author and Yuan generalized the long time existence of flow (\ref{2}) {without the condition $\eqref{3.3}$.}
\begin{theorem}[\cite{LY21}]
Let $(\omega(t),\alpha(t))$ be the solution to the flow (\ref{2}) on the maximal time interval $[0,T)$. %for $T<\infty$ 
%under the condition 
%\begin{align}\label{3.3}
%    -2\pi c_{1}(\mathcal{X})+[\alpha]-[\omega]=0.
%\end{align}
If $T<+\infty$, then
$$
\limsup_{t\to T}\left(\max_{\mathcal{X}}\left\{|\rho(\omega(t))|_{\omega(t)},
|\alpha(t)|_{\omega(t)}\right\}\right)=+\infty.
$$
\end{theorem}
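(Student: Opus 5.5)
We argue by contradiction and reduce to Theorem \ref{thm3.1}, which says that a finite maximal time forces $|{\rm Rm}_{\omega(t)}|$ to blow up. The reduction itself is not special to the class condition $\eqref{3.3}$, because the Shi-type argument behind Theorem \ref{thm3.1} only uses local parabolic estimates. So suppose $T<+\infty$ and that there is a constant $K$ with
$$|\rho(\omega(t))|_{\omega(t)}+|\alpha(t)|_{\omega(t)}\leq K \quad \text{on } \mathcal{X}\times[0,T).$$
The goal is to show that $|{\rm Rm}_{\omega(t)}|$ and all its covariant derivatives stay bounded on $[0,T)$, and likewise for $\alpha(t)$. Then $(\omega(t),\alpha(t))$ converges smoothly to a limit as $t\to T$, and short-time existence restarts the flow past $T$, contradicting maximality.

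\textbf{Step 1: uniform equivalence of the metrics.} Since $\partial_t\omega=-\rho-\omega+\alpha$, the bound gives $|\partial_t\omega|_{\omega(t)}\leq 2K+C$. Integrating over the finite interval $[0,T)$ yields $e^{-C'T}\omega\leq\omega(t)\leq e^{C'T}\omega$. Hence the flow stays uniformly equivalent to the initial metric, and Sobolev constants and injectivity radius at the original scale are controlled.

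\textbf{Step 2: curvature bound (main obstacle).} We need to bound $|{\rm Rm}|$ knowing only $\rho$ and $\alpha$. I would follow the local curvature estimate in the spirit of Kotschwar--Munteanu--Wang and \v{S}e\v{s}um, adapted to the coupled system.

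First, from the evolution equations compute
$$(\partial_t-\Delta)|{\rm Rm}|^2\leq -2|\nabla{\rm Rm}|^2+C|{\rm Rm}|^3+C|{\rm Rm}|\,|\nabla^2\alpha|.$$
Here the $\alpha$-term enters through $\partial_t\omega\ni\alpha$, and $\partial_t\alpha=\overline{\Box}\alpha$ is a heat equation whose Weitzenb\"ock terms involve ${\rm Rm}\ast\alpha$.

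Second, use the K\"ahler identity $\Delta R_{i\bar j k\bar l}\sim\nabla\bar\nabla\rho+{\rm Rm}\ast{\rm Rm}$ (the Bianchi identity expresses the Laplacian of curvature through the Hessian of Ricci). Then run an $L^p$ Moser-type scheme: integrate $|{\rm Rm}|^{p}$ against a cutoff, and integrate by parts so that the cubic term is traded for $|\rho|$ times $|\nabla{\rm Rm}|\,|{\rm Rm}|^{p-1}$. Because $\rho$ is bounded, this is absorbable, giving $\int|{\rm Rm}|^p\,dV\leq C(p)$ for all $p$ on $[0,T)$.

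Third, the heat equation for $\alpha$ with bounded $|\alpha|$ and $L^p$-bounded curvature gives $L^p$ bounds on $\nabla\alpha$ and $\nabla^2\alpha$ by the same method.

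Finally, Moser iteration on the differential inequality above, using the uniform Sobolev constant from Step 1, upgrades the $L^p$ bounds to $\sup|{\rm Rm}|\leq C$. The delicate point is closing the coupled estimate: the $\nabla^2\alpha$ term must be handled jointly with ${\rm Rm}$, by estimating $\int(|{\rm Rm}|^p+|\nabla^2\alpha|^p)$ together.

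\textbf{Step 3: higher derivatives and extension.} With $|{\rm Rm}|$ and $|\alpha|$ bounded, Shi-type derivative estimates, as used in Theorem \ref{thm3.1}, give bounds on all $\nabla^k{\rm Rm}$ and $\nabla^k\alpha$ on $[T/2,T)$. Combined with Step 1, this gives smooth convergence $\omega(t)\to\omega(T)$ (a K\"ahler metric) and $\alpha(t)\to\alpha(T)$, and extending the flow contradicts the maximality of $T$.
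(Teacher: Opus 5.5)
Your overall architecture (uniform equivalence of the metrics, a local $L^p$ curvature estimate from the bounds on $\rho$ and $\alpha$, an $L^\infty$ upgrade, Shi-type estimates, restarting the flow) matches the route the paper indicates with ``the local curvature estimate''. However, the step you flag as the main obstacle does not work as written.

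You claim that, via $\Delta{\rm Rm}=\nabla\bar\nabla\rho+{\rm Rm}\ast{\rm Rm}$ and an integration by parts, the cubic term $|{\rm Rm}|^{p+1}$ can be traded for $|\rho|\,|\nabla{\rm Rm}|\,|{\rm Rm}|^{p-1}$ and absorbed because $\rho$ is bounded. This fails on scaling grounds. Count ${\rm Rm}$ and $\rho$ with weight $2$ and each derivative with weight $1$. Then $|{\rm Rm}|^{p+1}$ has weight $2p+2$, while $|\rho|\,|\nabla{\rm Rm}|\,|{\rm Rm}|^{p-1}$ has weight $2p+3$. Integrating $\int\langle\nabla\bar\nabla\rho,{\rm Rm}\rangle|{\rm Rm}|^{p-2}\phi$ by parts actually gives $\int|\nabla\rho|\,|\nabla{\rm Rm}|\,|{\rm Rm}|^{p-2}\phi$. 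Moving both derivatives instead gives terms with $\nabla^2{\rm Rm}$. Either way, $\nabla\rho$ or $\nabla^2{\rm Rm}$ is not controlled by $\sup|\rho|\leq K$.

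There is a second problem: you cannot have both the good term $-2|\nabla{\rm Rm}|^2|{\rm Rm}|^{p-2}$ and the removal of the cubic term. The good term comes from writing $\partial_t{\rm Rm}=\Delta{\rm Rm}+{\rm Rm}\ast{\rm Rm}+\cdots$. The cubic term disappears only if you instead use the variation formula $\partial_t{\rm Rm}=\nabla\bar\nabla(\rho-\alpha)+{\rm Rm}\ast(\rho,\alpha,\omega)$. These are two expressions for the same quantity, and once you use the second, no gradient term is left to absorb anything into.

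The missing ingredient is the Kotschwar--Munteanu--Wang mechanism, on which a local curvature estimate of this kind is built.

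\textbf{First term.} After using the variation formula and integrating by parts once, you face $\int(|\nabla\rho|+|\nabla\alpha|)\,|\nabla{\rm Rm}|\,|{\rm Rm}|^{p-2}\phi$. By Cauchy--Schwarz this splits into $\iint(|\nabla\rho|^2+|\nabla\alpha|^2)|{\rm Rm}|^{p-1}\phi$ and $\iint|\nabla{\rm Rm}|^2|{\rm Rm}|^{p-3}\phi$. To control the first, multiply the heat-type evolution equations of $|\rho|^2$ and $|\alpha|^2$ by $|{\rm Rm}|^{p-1}\phi$ and integrate in space-time. These equations carry good gradient terms, and the second-order coupling term in $\alpha$ in the equation for $\rho$ is integrated by parts as well. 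In this way every cubic curvature term that arises carries a factor $|\rho|^2$ or $|\alpha|^2\leq K^2$, and is only of size $K^2|{\rm Rm}|^p$.

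\textbf{Second term.} This comes from evolving the lower power $\int|{\rm Rm}|^{p-1}\phi$ in reaction-diffusion form. Its cubic term is only $|{\rm Rm}|^{p}$, and its $\nabla\bar\nabla\alpha$ forcing is again integrated by parts. The two estimates then close in a Gronwall inequality.

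In this scheme $\alpha$ is treated on the same footing as $\rho$, and only $\nabla\alpha$ enters the $L^p$ estimate. This removes the circularity in your third sub-step, where you need $\nabla^2\alpha\in L^p$ to bound ${\rm Rm}$ in $L^p$ and ${\rm Rm}\in L^p$ to bound $\nabla^2\alpha$. The upgrade from $L^p$ to $L^\infty$, however, still has to handle the forcing term $\nabla^2\alpha$ in the equation for ${\rm Rm}$. The unspecified joint estimate of $\int(|{\rm Rm}|^p+|\nabla^2\alpha|^p)$ that you mention is not yet an argument.
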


As a consequence, they {obtain the following} 

\begin{theorem}[\cite{LY21}]
Let $(\omega(t),\alpha(t))$ be the solution to the flow (\ref{2}) for $t \in [0, T)$ with the initial condition $(\omega,\alpha)$. 
%Under the condition $\eqref{3.3}$. 
Suppose that the Ricci curvature of $\omega(t)$ and $|\alpha(t)|_{\omega(t)}$ are uniformly bounded on $[0, T)$. If 
 $T<+\infty$, then the solution $(\omega(t),\alpha(t))$ can be extended past time $T$.
\end{theorem}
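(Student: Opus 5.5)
The plan is to argue by contradiction and reduce to the preceding theorem of \cite{LY21}. Suppose $T<+\infty$ and that there is a constant $K$ with
$$|\rho(\omega(t))|_{\omega(t)}+|\alpha(t)|_{\omega(t)}\leq K \quad\text{on }\mathcal{X}\times[0,T).$$
The preceding theorem says that if $T$ is the maximal time and $T<+\infty$, then $\max_{\mathcal{X}}\{|\rho(\omega(t))|_{\omega(t)},|\alpha(t)|_{\omega(t)}\}$ is unbounded as $t\to T$. So if $T$ is maximal, this bound is impossible. It follows that $T$ is not the maximal time, and the solution extends past $T$.

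To make the extension concrete, rather than only a formal consequence, I would also verify that $(\omega(t),\alpha(t))$ converges smoothly as $t\to T$ to a limit pair $(\omega(T),\alpha(T))$ with $\omega(T)$ Kähler. Short-time existence (standard parabolic theory, as recalled in the introduction) then restarts the flow from $(\omega(T),\alpha(T))$, and uniqueness glues the two solutions. The steps would be as follows.

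\textbf{Uniform equivalence of metrics.} From $\partial_t\omega=-\rho-\omega+\alpha$ and the bound $K$, we get $|\partial_t\omega|_{\omega(t)}\leq 2K+n$. Integrating over the finite interval $[0,T)$ gives
$$e^{-C T}\omega\leq\omega(t)\leq e^{CT}\omega,$$
so the metrics are uniformly equivalent and $\omega(t)$ converges in $C^0$ to a continuous positive form.

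\textbf{Higher regularity.} I would invoke the local curvature (Shi-type) estimates underlying \cite{LY21}, which show that bounded Ricci curvature and bounded $|\alpha|$ on a finite time interval control $|{\rm Rm}_{\omega(t)}|$ and all derivatives $|\nabla^k{\rm Rm}|$, $|\nabla^k\alpha|$. With these in hand, the time derivatives $\partial_t\omega$ and $\partial_t\alpha=\overline{\Box}_{\omega(t)}\alpha$ are bounded in every $C^k(\omega)$ norm, so the limits exist in $C^\infty$.

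\textbf{Main obstacle.} The key difficulty is the passage from Ricci plus $\alpha$ bounds to full curvature bounds; that is precisely the content of the preceding theorem. Relying on it, the remaining work is the routine convergence-and-restart argument above. If one wants to avoid reproving regularity, the contradiction with the maximality of $T$ in the previous theorem already suffices.
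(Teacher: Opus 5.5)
Your proposal is correct and matches the paper, which presents this statement simply as a consequence of the preceding blow-up criterion of \cite{LY21}. Your contradiction argument is exactly that step: uniform bounds on $|\rho(\omega(t))|_{\omega(t)}$ and $|\alpha(t)|_{\omega(t)}$ rule out $T$ being a finite maximal time. The convergence-and-restart sketch you add is optional and not needed.
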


%%%%%%%%%%%%%%%%%%%%%%%%%%%%%%%%%%%%%%%%%%%%%%%%%%%%%%%%%%%%%%%%%%%%%%%%%%%%%%
\section{Laplacian flow}
%%%%%%%%%%%%%%%%%%%%%%%%%%%%%%%%%%%%%%%%%%%%%%%%%%%%%%%%%%%%%%%%%%%%%%%%%%%%

%%%%%%%%%%%%%%%%%%%%%%%%%%%%%%%%%%%%%%%%%%%%%%%%%%%%%%%%%%%%%%%%%%%%%%%%%%%%%%
%%%%%%%%%%%%%%%%%%%%%%%%%%%%%%%%%%%%%%%%%%%%%%%%%%%%%%%%%%%%%%%%%%%%%%%%%%%%%%
\subsection{$G_{2}$-structures on $7$-manifolds}
%%%%%%%%%%%%%%%%%%%%%%%%%%%%%%%%%%%%%%%%%%%%%%%%%%%%%%%%%%%%%%%%%%%%%%%%%%%
Let $\mathbb{O}$ be the octonions (exceptional division algebra). {From} the vector cross product ``$\times$" on ${\rm Im}\ \mathbb{O}$, we can define the 3-form by
$$\phi(a,b,c):=\frac{1}{2}\langle {[}a,b{]},c\rangle=\langle a\times b,c\rangle\quad\quad\quad\text{for}\ a,b,c\in {\rm Im}\ \mathbb{O}.$$
Let $\{e_{1},e_{2},\cdots,e_{7}\}$ denote the standard basis of $\mathbb{R}^{7}$ and $\{e^{1},e^{2},\cdots,e^{7}\}$ be its dual basis. Using the octonion multiplication table, one can show that
\begin{align}\label{phi}
    \phi=e^{123}+e^{145}+e^{167}+e^{246}-e^{257}-e^{347}-e^{356},
\end{align}
where $e^{ijk}:=e^{i}\wedge e^{j}\wedge e^{k}$. The subgroup {fixing $\phi$} of ${\rm GL}(7,\mathbb{R})$ is the exceptional Lie group $G_{2}$, which is a compact, connected, simple $14$-dimensional Lie subgroup of ${\rm SO}(7)$. In fact, {Lie group} $G_{2}$ acts irreducibly on $\mathbb{R}^{7}$ and preserves the metric and orientation for which $\{e_{1},e_{2},\cdots,e_{7}\}$ is an oriented orthonormal basis. Note that {Lie group} $G_{2}$ also preserves the $4$-form
$$\ast_{\phi}\phi=e^{4567}+e^{2367}+e^{2345}+e^{1357}-e^{1346}-e^{1256}-e^{1247},$$
where $\ast_{\phi}$ is the Hodge star operator determined by the metric and orientation.

\begin{remark}
    The vector cross product ``$\times$" is an algebraic structure defined in {the imaginary part of} a normed division algebra. Therefore, the $G_{2}$-structure can only be defined in the 7-dimensional manifold. For more details, see \cite{Kar20}.
\end{remark}

For a smooth $7$-manifold $M$ and a point $x\in M$, we define 
\begin{align}
    \wedge_{+}^{3}(T_{x}^{\ast}M):=\left\{\varphi_{x}\in\wedge^{3}(T_{x}^{\ast}M)\ \Big{|}\ h^{\ast}\phi=\varphi_{x},\
    \text{for\ invertible}\ h\in\text{Hom}_{\mathbb{R}}(T_{x}^{\ast}M,\mathbb{R}^{7})\right\}\notag
\end{align}
and the bundle
\begin{align}
    \wedge_{+}^{3}(T^{\ast}M):=\bigcup_{x\in M}\wedge_{+}^{3}(T_{x}^{\ast}M)\notag.
\end{align}
We call a section $\varphi$ of $\wedge_{+}^{3}(T^{\ast}M)$ a {\it positive $3$-form} on $M$ or a {\it $G_{2}$-structure} on $M$, and denote the space of positive 3-form{s} by $\Omega^{3}_{+}(M)$. The existence of $G_{2}$-structure is equivalent to the
property that $M$ is orientable and spinnable, which is equivalent to the vanishing of the first two Stiefel-Whitney classes $w_{1}(TM)$ and $w_{2}(TM)$. For more details, see Theorem 10.6 in \cite{LW89}.

For a $3$-form $\varphi$, we define a $\Omega^{7}(M)$-valued bilinear form $\text{B}_{\varphi}$ by
$$\text{B}_{\varphi}(u,v)=\frac{1}{6}(u\lrcorner\varphi)\wedge(v\lrcorner\varphi)\wedge\varphi,$$
where $u, v$ are tangent vectors on $M$ and $``\lrcorner"$ is the interior multiplication operator (\textit{Here we use the orientation in} \cite{Bry05}, {which refers to $\eqref{phi}$}). Then we can see that any $\varphi\in\Omega^{3}_{+}(M)$ determines a Riemannian metric $g_{\varphi}$ and an orientation $d V_{\varphi}$, hence the Hodge star operator $\ast_{\varphi}$ and the associated $4$-form
$$\psi:=\ast_{\varphi}\varphi$$
can also be uniquely determined by $\varphi$. 

The group $G_{2}$ acts irreducibly on $\mathbb{R}^{7}$ (and hence on $\wedge^{1}(\mathbb{R}^{7})^{\ast}$ and $\wedge^{6}(\mathbb{R}^{7})^{\ast}$), but it
acts reducibly on $\wedge^{k}(\mathbb{R}^{7})^{\ast}$ for $2\leq k\leq 5$. Hence a $G_{2}$ structure $\varphi$ induces splittings
of the bundles $\wedge^{k}(T^{\ast}M)(2\leq k\leq5)$ into direct summands, which we denote by
$\wedge^{k}_{l}(T^{\ast}M,\varphi)$ with $l$ being the rank of the bundle. We let the space of sections
of $\wedge^{k}_{l}(T^{\ast}M,\varphi)$ be $\Omega^{k}_{l}(M)$. Define the natural projections
$$\pi^{k}_{l}:\Omega^{k}(M)\longrightarrow \Omega^{k}_{l}(M),\ \ \alpha\longmapsto \pi^{k}_{l}(\alpha).$$
Then we have
\begin{align}
    \Omega^{2}(M)&=\Omega^{2}_{7}(M)\oplus\Omega^{2}_{14}(M),\notag\\
    \Omega^{3}(M)&=\Omega^{3}_{1}(M)\oplus\Omega^{3}_{7}(M)\oplus\Omega^{3}_{27}(M)\notag{,}
\end{align}
where each component is determined by
\begin{align}
    \Omega^{2}_{7}(M)&=\{X\lrcorner\varphi:X\in C^{\infty}(TM)\}=\{\beta\in\Omega^{2}(M):\ast_{\varphi}(\varphi\wedge\beta)=2\beta\},\notag\\
    \Omega^{2}_{14}(M)&=\{\beta\in\Omega^{2}(M):\psi\wedge\beta=0\}=\{\beta\in\Omega^{2}(M):\ast_{\varphi}(\varphi\wedge\beta)=-\beta\},\notag
\end{align}
and
\begin{align}
    \Omega^{3}_{1}(M)&=\{f\varphi:f\in C^{\infty}(M)\},\notag\\
    \Omega^{3}_{7}(M)&=\{\ast_{\varphi}(\varphi\wedge\alpha):\alpha\in\Omega^{1}(M)\}=\{X\lrcorner\psi:X\in C^{\infty}(TM)\},\notag\\
    \Omega^{3}_{27}(M)&=\{\eta\in\Omega^{3}(M):\eta\wedge\varphi=\eta\wedge\psi=0\}.\notag
\end{align}

\begin{remark}\label{remark2.1}
    $\Omega^{4}$ and $\Omega^{5}$ have the corresponding decompositions by Hodge duality. {For} more details about $G_{2}$-decompositions, see \cite{Bry05, Kar20}.
\end{remark}

By the definition  of $G_{2}$ decompositions, we can find unique differential forms
$\tau_{0}\in\Omega^{0}(M),\tau_{1},\widetilde{\tau}_{1}\in\Omega^{1}(M),\tau_{2}\in\Omega^{2}_{14}(M)$ and $\tau_{3}\in\Omega^{3}_{27}(M)$ such that (see \cite{Bry05})
\begin{align}
    d\varphi&=\tau_{0}\psi+3\!\ \tau_{1}\wedge\varphi+\ast_{\varphi}\tau_{3},\\
    d\psi&=4\!\ \widetilde{\tau}_{1}\wedge\psi+\tau_{2}\wedge\varphi.
\end{align}
In fact, Bryant \cite{Bry05} proved that $\tau_{1}=\widetilde{\tau}_{1}$. We call $\tau_{0}$ the \textit{scalar torsion}, $\tau_{1}$ the \textit{vector torsion}, $\tau_{2}$ the \textit{Lie algebra torsion}, and $\tau_{3}$ the \textit{symmetric traceless torsion}. We also call $\tau_{\varphi}:=\{\tau_{0},\tau_{1},\tau_{2},\tau_{3}\}$ the intrinsic torsion forms of the $G_{2}$-structure $\varphi$.

{From \cite{Kar20}, the covariant derivative $\nabla\varphi$ is a smooth section of $T^{\ast}M\otimes \Lambda^{3}_{7}(T^{\ast}M)$. Thus we can define the full torsion tensor as 
\begin{definition}
    Let $X$ be a vector field on $M$. $\nabla_{X}\varphi$ can be written as
    $$\nabla_{X}\varphi=\mathbf{T}(X)\lrcorner \psi$$
    for some vector field $\mathbf{T}(X)$ on $M$. We call $\mathbf{T}=\mathbf{T}_{ij}dx^{i}\otimes dx^{j}$ the full torsion tensor of $\varphi$, which satisfies
    \begin{align}\label{2.7}
    \nabla_{i}\varphi_{jkl}&=\mathbf{T}_{i}^{\ m}\psi_{mjkl},\\
    \mathbf{T}_{i}^{\ j}&=\frac{1}{24}\nabla_{i}\varphi_{lmn}\psi^{jlmn},
\end{align}
and 
\begin{align}
    \nabla_{m}\psi_{ijkl}=-(\mathbf{T}_{mi}\varphi_{jkl}-\mathbf{T}_{mj}\varphi_{ikl}-\mathbf{T}_{mk}\varphi_{jil}-\mathbf{T}_{ml}\varphi_{jki}).
\end{align}
\end{definition}
%the full torsion tensor $\mathbf{T}=\mathbf{T}_{ij}dx^{i}\otimes dx^{j}$ satisfies the followings:
The full torsion tensor $\mathbf{T}_{ij}$ is related to the
intrinsic torsion forms by the following:
\begin{align}
\label{Def of T}\mathbf{T}_{ij}=\frac{\tau_{0}}{4}g_{ij}-(\tau_{3})_{ij}-(\tau_{1}^{\#}\lrcorner\varphi)_{ij}-\frac{1}{2}(\tau_{2})_{ij}
\end{align}
or as $2$-tensors,
$$\mathbf{T}=\frac{\tau_{0}}{4}g_{\varphi}-\tau_{3}-\tau_{1}^{\#}\lrcorner\varphi-\frac{1}{2}\tau_{2},$$
where $(\tau_{1}^{\#}\lrcorner\varphi)_{ij}=(\tau_{1}^{\#})^{l}\varphi_{lij}$ and $\#$ is the isomorphism from $1$-form to vector fields.}

${}$

If $\varphi$ is closed, which means $d\varphi=0$, then $\tau_{0},\tau_{1},\tau_{3}$ are all zero, so the only nonzero torsion form is 
$$\tau\equiv\tau_{2}=(\tau_{2})_{ij}dx^{i}\otimes dx^{j}=\frac{1}{2}\tau_{ij}dx^{i}{\wedge} dx^{j}.$$
 Then from {\cite{LW17}}, the full torsion tensor $\mathbf{T}$ satisfies the following.
$$
\mathbf{T}_{ij}=-\mathbf{T}_{ji}=-\frac{1}{2}(\tau_{2})_{ij}\ \ \text{or equivalently}\ \ \mathbf{T}=-\frac{1}{2}\tau,
$$
so that $\mathbf{T}$ is a skew-symmetric 2-tensor or a $2$-form.

%%%%%%%%%%%%%%%%%%%%%%%%%%%%%%%%%%%%%%%%%%%%%%%%%%%%%%%%%%%%%%%%%%%%%%%%%%%%%%
\subsection{Laplacian flow}
%%%%%%%%%%%%%%%%%%%%%%%%%%%%%%%%%%%%%%%%%%%%%%%%%%%%%%%%%%%%%%%%%%%%%%%%%%%

In this section, we study   the following Laplacian flow for closed $ G_{2}$-structure{s} introduced by Bryant \cite{Bry05}.
%\begin{equation}
%  \left \{
%       \begin{array}{rl}
%          \partial_{ t}\varphi(t)&=\Delta_{\varphi(t)}\varphi(t),\\
%           d\varphi(t)&=0,\\
 %          \varphi(0)&=\varphi,
%       \end{array}
%  \right.
%  \label{The closed Laplacian flow}
%\end{equation}
{\begin{equation}
  \left \{
       \begin{array}{rl}
          \partial_{ t}\varphi(t)&=dd^{\ast}\varphi(t),\\
           %{d\varphi(t)}&{=0,}\\
           \varphi(0)&=\varphi,
       \end{array}
  \right.
  \label{The closed Laplacian flow}
\end{equation}}
where %$\Delta_{\varphi(t)}\varphi(t)=dd^{\ast}_{\varphi(t)}\varphi(t)+d^{\ast}_{\varphi(t)}d\varphi(t)$ is the Hodge Laplacian of $g(t)$ and 
$\varphi$ is an initial  closed $G_{2}$-structure(a positive closed three-form). 
%Here $g(t)$ is the Riemannian metric {algebraically determined by} $\varphi(t)$. Since $\Delta_{\varphi}\varphi=dd^{\ast}_{\varphi}\varphi$ for a closed $G_{2}$-structure $\varphi$, we see that the closedness of $\varphi(t)$ is preserved along the Laplacian flow $\eqref{The closed Laplacian flow}$. 
%The flow $\eqref{The closed Laplacian flow}$ can be viewed as the gradient flow for the Hitchin functional when the variations are restricted to the cohomology class of the closed $G_{2}$-structure, where {the} Hitchin functional introduced by Hitchin \cite{Hit00} {is}
%$$\mathcal{H}:[\overline{\varphi}]_{+}\longrightarrow\mathbb{R}^{+},\ \varphi\longmapsto\frac{1}{7}\int_{M}\varphi\wedge\psi=\int_{M}\ast_{\varphi}1.$$
%Here $\overline{\varphi}$ is a closed $G_{2}$-structure on $M$ and  $[\overline{\varphi}]_{+}$ is the open subset of the cohomology class $[\overline{\varphi}]$ consisting of $G_{2}$-structures. Any critical point of $\mathcal{H}$ gives rise to a torsion-free $G_{2}$-structure when {the} $7$-manifold $M$ is compact.
The motivation for Bryant {to} pose {the Laplacian flow} is from a reasonable {expectation}, which is based on the work of Joyce \cite{Joy00}. {In this survey, we call it reasonable conjecture.}

\begin{conj} If the initial $G_{2}$-structure $\varphi$ on a closed $7$-manifold is closed and has {sufficiently small torsion}, then the flow \eqref{The closed Laplacian flow} will exist for all time and converge to a torsion-free $G_{2}$-structure.
\end{conj}

{Motivated by ``Ricci-flat metrics are dynamically stable along the Ricci flow", Lotay and Wei \cite{LW19A} gave a positive answer for the above conjecture.}

%%%%%%%%%%%%%%%%%%%%%%%%%%%%%%%%%%%%%%%%%%%%%%%%%%%%%%%%%%%%%%%%%%%%%%%%%%%%%%
\subsection{Existence results for the solution}
%%%%%%%%%%%%%%%%%%%%%%%%%%%%%%%%%%%%%%%%%%%%%%%%%%%%%%%%%%%%%%%%%%%%%%%%%%%
When studying a geometric flow, a fundamental question is the existence of its solutions. In 2011, Bryant and Xu \cite{BX11} established the short-time existence and uniqueness of solutions to the Laplacian flow.

\begin{theorem}[\cite{BX11}]
For a {closed} $7$-manifold $M$, the initial value problem \eqref{The closed Laplacian flow} has a unique solution for a short time $[0,T)$ with $T$ depending on $\varphi$.
\end{theorem}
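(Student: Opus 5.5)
The plan is to reduce the Laplacian flow \eqref{The closed Laplacian flow} to a strictly parabolic system using a DeTurck-type trick, as in Bryant--Xu. The operator $\varphi\mapsto \Delta_\varphi\varphi = dd^{\ast}_\varphi\varphi$ on closed positive $3$-forms is only weakly parabolic. Its linearization has a degenerate symbol because of diffeomorphism invariance, and parabolicity also fails in the directions that do not preserve closedness.

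First, I will compute the linearization of $\Delta_\varphi\varphi$ at a closed $G_2$-structure. Using the decomposition $\Omega^3=\Omega^3_1\oplus\Omega^3_7\oplus\Omega^3_{27}$, the principal symbol acts on a variation $\eta$ like $-\Delta\eta$ plus terms of the form $\mathcal{L}_V\varphi$. Here $V$ is a vector field depending on first derivatives of $\eta$. The degenerate directions are the $\Omega^3_7$ ones coming from diffeomorphisms, together with the directions in which $d\eta\neq 0$.

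Next, I will choose a vector field $V(\varphi)$, built from the torsion and a fixed background connection, and consider the modified flow
\[
\partial_t\varphi=\Delta_\varphi\varphi+\mathcal{L}_{V(\varphi)}\varphi .
\]
Because we stay in the fixed cohomology class, we write $\varphi(t)=\varphi_0+d\sigma(t)$ with $\sigma$ a $2$-form. The equation then becomes an equation for $\sigma$ of the form $\partial_t\sigma=\Delta_\varphi\sigma+(\text{lower order})+d(\cdots)$. After adding a gauge term to handle the exact directions, this is a strictly parabolic quasilinear equation.

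The main obstacle is verifying this strict parabolicity. The symbol computation must show that, on closed forms, the modified operator's linearization equals $-\Delta$ up to lower order. The exact part is handled by Hodge decomposition. If direct symbol ellipticity fails, I will apply the Nash--Moser framework of Hamilton, as Bryant--Xu do. Concretely, I would check the integrability condition $d(\Delta_\varphi\varphi)=0$ and invoke Hamilton's weakly parabolic theorem with integrability condition, where the integrability condition is closedness.

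Given short-time existence for the modified flow from standard parabolic theory, I will recover a solution of \eqref{The closed Laplacian flow} by pulling back. Solve the ODE $\partial_t\Phi_t=-V(\varphi(t))\circ\Phi_t$ with $\Phi_0=\mathrm{id}$, and set $\Phi_t^{\ast}\varphi(t)$. Uniqueness follows by the usual argument: given two solutions, solve the associated harmonic-map-type heat flow for the diffeomorphisms, which is strictly parabolic. This shows that both solutions correspond to solutions of the modified strictly parabolic flow with the same data, so uniqueness for that flow gives uniqueness for \eqref{The closed Laplacian flow}. The existence time $T$ depends on $\varphi$ through the $C^{k,\alpha}$ norms used in the parabolic theory.
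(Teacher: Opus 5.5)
The survey gives no proof of this statement; it only cites Bryant--Xu. Their argument has three parts: DeTurck gauge fixing; the computation that the gauge-fixed operator $\varphi\mapsto\Delta_\varphi\varphi+\mathcal{L}_{V(\varphi)}\varphi$ is strictly parabolic \emph{only in the direction of closed forms}; and a Nash--Moser argument in the spirit of Hamilton's original Ricci flow paper. Nash--Moser is needed because standard parabolic theory does not apply to an operator that is parabolic only along closed forms.

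Your main route is genuinely different. Passing to a primitive $\varphi=\varphi_0+d\sigma$ and gauge-fixing the redundancy $\sigma\mapsto\sigma+d\beta$ turns the problem into an honest quasilinear parabolic system on $2$-forms. Ordinary parabolic theory then replaces Nash--Moser and tame estimates. With the adjustments below this works, and it is the more elementary proof. The Bryant--Xu route works with $\varphi$ directly and needs no primitive or second gauge, at the price of the Nash--Moser setup. Both rest on the same lemma, which you defer: along closed $\eta$, the linearization of $\Delta_\varphi\varphi$ is $-\Delta_\varphi\eta+\mathcal{L}_{V(\eta)}\varphi$ modulo lower order.

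Adjustments.

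(i) With the paper's convention $\Delta=dd^{\ast}+d^{\ast}d\ge 0$, your model equation must read $\partial_t\sigma=-\Delta\sigma+\cdots$. With the metric frozen, $d^{\ast}_\varphi\varphi$ linearizes to $d^{\ast}d\dot\sigma$, which has the backward sign. The forward sign comes only from the dependence of $\ast_\varphi$ on $\varphi$.

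(ii) Make the gauge explicit. With $Q(\varphi)=d^{\ast}_\varphi\varphi+V(\varphi)\lrcorner\varphi$, the gauge-fixed flow on closed forms is $\partial_t\varphi=dQ(\varphi)$. You then solve $\partial_t\sigma=Q(\varphi_0+d\sigma)-\lambda\,dd^{\ast}_{\bar g}\sigma$, whose gauge term is killed by $d$. The Bryant--Xu lemma determines the symbol of $\sigma\mapsto Q(\varphi_0+d\sigma)$ only modulo $\xi\wedge\Lambda^1$. So relative to $\Lambda^2=\xi\wedge\Lambda^1\oplus\ker(\xi\lrcorner)$ the full symbol is block triangular:
\begin{itemize}
\item on $\xi\wedge\Lambda^1$ it is that of $-\lambda\Delta$, since only the gauge term acts there;
\item on $\ker(\xi\lrcorner)$ it is that of $-\Delta$;
\item there is an off-diagonal piece $\xi\wedge\gamma(\xi,\cdot)$.
\end{itemize}
It is therefore not ``$-\Delta$ plus lower order''. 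It is, however, Petrovskii parabolic, and Legendre--Hadamard for $\lambda$ large, which suffices.

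(iii) For uniqueness, a gauge-fixed solution $\varphi(t)$ does not determine $\sigma(t)$, so you must lift it. Set $\sigma=d^{\ast}_{\bar g}G_{\bar g}(\varphi-\varphi_0)+\mu$, where $\mu$ is closed and solves $\partial_t\mu=-\lambda\Delta_{\bar g}\mu+(1-\pi_{\mathrm{coex}})Q(\varphi)$ with $\mu(0)=0$. This $\sigma$ solves your system with $\sigma(0)=0$, and standard uniqueness then applies.

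(iv) Your Nash--Moser fallback would fail as stated. With $L=d$, Hamilton's theorem needs strict positivity of the symbol on $\ker\sigma(d)=\xi\wedge\Lambda^2$. That space contains the diffeomorphism directions $\xi\wedge(X\lrcorner\varphi)$, where the symbol of $D(\Delta_\varphi\varphi)$ vanishes. These directions generally have components in all three summands, not only in $\Lambda^3_7$. The theorem must instead be applied to the gauge-fixed operator written as $E(\varphi)=d(\cdots)$, so that $dE\equiv 0$ on all positive $3$-forms. By contrast, $d(\Delta_\varphi\varphi)=dd^{\ast}_\varphi d\varphi$ is third order away from the closed ones.
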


As for the long-time solution, Lotay and Wei \cite{LW17} established a key result under the assumption that either the Riemannian curvature or the Ricci curvature remains bounded.

\begin{theorem} [\cite{LW17}] \label{thm4.5}
Let $M$ be a {closed} $7$-manifold and $(\varphi_{t})_{t\in[0,T)}$ be a solution to
the flow \eqref{The closed Laplacian flow} for closed $G_{2}$-structures {with $T< +\infty$.}

\begin{itemize}

\item[(a)] If the Riemann curvature satisfies 
$$\sup_{M\times[0,T)}\left(|{\rm Rm}_{\varphi(t)}|_{\varphi(t)}
+|\nabla_{\varphi(t)}{\bf T}(t)|_{\varphi(t)}\right)< +\infty.$$ then $\varphi(t)$ can be extended past time $T$.

\item[(b)] If the Ricci curvature satisfies 
$$\sup_{M\times[0,T)}|{\rm Ric}_{\varphi(t)}|_{\varphi(t)}< +\infty,$$ then $\varphi(t)$ can be extended past time $T$.
\end{itemize}
\end{theorem}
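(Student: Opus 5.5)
Part (a) is a blow-up contradiction argument that uses Shi-type derivative estimates. Part (b) is reduced to (a).

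For (a), set $\Lambda:=\sup_{M\times[0,T)}(|{\rm Rm}|+|\nabla{\bf T}|)<+\infty$. I would first use the structure of the flow for closed $G_2$-structures. Since $\Delta_\varphi\varphi=i_\varphi(h)$ with
$$h_{ij}=-{\rm Ric}_{ij}-\tfrac13|{\bf T}|^2g_{ij}-2{\bf T}_i^{\ k}{\bf T}_{kj},$$
the induced metric evolves by
$$\partial_t g_{ij}=-2{\rm Ric}_{ij}-\tfrac23|{\bf T}|^2g_{ij}-4{\bf T}_i^{\ k}{\bf T}_{kj}.$$
For closed structures, ${\rm Ric}$ is quadratic in ${\bf T}$ plus linear in $\nabla{\bf T}$, and $|{\bf T}|^2=-R$ (scalar curvature nonpositive). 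A bound on ${\rm Rm}$ therefore bounds $|{\bf T}|^2$ and hence the speed $|\partial_t g|$. This gives uniform equivalence of the metrics $g(t)$ for $t\in[0,T)$, because $T<\infty$.

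Next I would prove Shi-type estimates for all higher derivatives. The pair $({\rm Rm},\nabla{\bf T})$ satisfies a reaction-diffusion system:
$$\partial_t{\rm Rm}=\Delta{\rm Rm}+{\rm Rm}*{\rm Rm}+{\rm Rm}*{\bf T}*{\bf T}+\nabla{\bf T}*\nabla{\bf T}+\nabla^2{\bf T}*{\bf T}+\dots,$$
and similarly for $\nabla{\bf T}$. Using the Bernstein-type quantity $|\nabla^k{\rm Rm}|^2+|\nabla^{k+1}{\bf T}|^2$ and induction on $k$, I would get bounds $|\nabla^k{\rm Rm}|+|\nabla^{k+1}{\bf T}|\le C_k$ on $[T/2,T)$. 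Because the time interval is finite and the metrics are equivalent, these bounds convert into bounds on $\partial_t$-derivatives of $\varphi$ and on all coordinate derivatives. Then $\varphi(t)\to\varphi(T)$ smoothly. The limit is still positive, by metric equivalence, and closed. Restarting with the short-time existence theorem of Bryant–Xu extends the flow past $T$.

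For (b), assume $|{\rm Ric}|\le K$. Then $|{\bf T}|^2=-R\le CK$, so the metric speed is bounded and the metrics are again uniformly equivalent. The main obstacle is promoting the Ricci bound to a bound on ${\rm Rm}$ and $\nabla{\bf T}$, and I would argue by contradiction with blow-up. Suppose $\Lambda(t):=\sup_{M\times[0,t]}(|{\rm Rm}|^2+|\nabla{\bf T}|^2)^{1/2}\to\infty$. Pick points $(x_i,t_i)$ realizing it and rescale the flow parabolically:
$$\varphi_i(s)=\lambda_i^{3}\varphi(t_i+\lambda_i^{-2}s),\qquad \lambda_i^2=\Lambda(t_i).$$
The rescaled flows satisfy the hypotheses of (a) uniformly, so the Shi estimates hold for them. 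For injectivity radius I need a noncollapsing input, which I would take from the volume/injectivity control of Lotay–Wei. Given that, Cheeger–Gromov–Hamilton compactness for the Laplacian flow yields a limit flow $\varphi_\infty(s)$, $s\le0$, with two properties:
\begin{itemize}
\item ${\rm Ric}\equiv0$ and ${\bf T}\equiv0$, since the rescaled Ricci and torsion tend to zero;
\item $|{\rm Rm}|^2+|\nabla{\bf T}|^2=1$ at the base point.
\end{itemize}
With ${\bf T}\equiv0$ we have $\nabla{\bf T}=0$, so $|{\rm Rm}|=1$ at the base point. The limit is a complete Ricci-flat $G_2$ manifold. I would derive the contradiction by volume: noncollapsing together with a nonflat Ricci-flat limit should contradict Euclidean volume growth, or the blow-up argument could compare against asymptotic volume. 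The rigidity step is delicate. An alternative first attempt is a direct estimate: rewrite $\nabla{\bf T}$ via the identity ${\rm Ric}=\nabla{\bf T}*\varphi+{\bf T}*{\bf T}$. For closed structures the Ricci tensor determines the relevant part of $\nabla{\bf T}$, which gives $|\nabla{\bf T}|\le C(|{\rm Ric}|+|{\bf T}|^2)$ modulo a term controlled by an integral estimate. One would then bound ${\rm Rm}$ from the evolution of $|{\rm Rm}|$ with ${\rm Ric}$ bounded, as in Šešum's argument (Theorem \ref{thm2.2}), using the noncollapsing blow-up above.
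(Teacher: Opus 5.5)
Your part (a) and the blow-up set-up in (b) follow the Lotay--Wei argument that the paper cites. The gap is that the step which actually closes (b) is missing, and the mechanism you suggest for it is false.

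A complete, noncollapsed, nonflat Ricci-flat $7$-manifold, even one with Euclidean volume growth, is not a contradiction. The Bryant--Salamon asymptotically conical torsion-free $G_2$-manifolds on $\Lambda^2_-(S^4)$, $\Lambda^2_-(\mathbb{CP}^2)$ and the spinor bundle of $S^3$ are exactly such spaces, and all of them satisfy a lower bound on volume ratios.

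Your fallback does not close the gap either. The part of ${\rm Ric}$ that is linear in $\nabla\mathbf{T}$ is a projection onto the $28$-dimensional space of symmetric $2$-tensors. But $\nabla\mathbf{T}$ is a section of the $98$-dimensional bundle $T^*M\otimes\Lambda^2_{14}$. So there is no pointwise bound $|\nabla\mathbf{T}|\le C(|{\rm Ric}|+|\mathbf{T}|^2)$; what holds is control of $\nabla\mathbf{T}$ by ${\rm Rm}$ and $\mathbf{T}$, as in Remark~\ref{rem4.7}. Also, \v{S}e\v{s}um's theorem is itself proved by a blow-up, not by a maximum-principle bound on $|{\rm Rm}|$.

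The missing idea is to use $t_i\to T$, together with the fact that bounded velocity gives more than uniform equivalence. The argument runs as follows.

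\begin{enumerate}
\item Since $|\partial_t g|_{g(t)}\le C$ and $T<+\infty$, $g(t)$ converges uniformly to a continuous metric $g(T)$ with $e^{-C(T-t)}g(T)\le g(t)\le e^{C(T-t)}g(T)$.
\item Since $\Lambda$ is finite on each $M\times[0,t]$, your blow-up times satisfy $t_i\to T$.
\item The rescaled ball $B_{g_i(0)}(x_i,r)$ is the ball $B_{g(t_i)}(x_i,r/\lambda_i)$ of a metric that is $e^{C(T-t_i)}$-close to $g(T)$. This is at a scale $r/\lambda_i\to0$, on which the continuous metric $g(T)$ is almost constant in coordinates. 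Hence ${\rm Vol}\,B_{g_i(0)}(x_i,r)/(\omega_7r^7)\to1$ for each fixed $r>0$.
\item This gives the injectivity radius bound you need, via Cheeger--Gromov--Taylor with $|{\rm Rm}_{g_i(0)}|\le1$. No Perelman-type noncollapsing input for this flow is required.
\item In the limit, ${\rm Vol}\,B_{g_\infty}(x_\infty,r)=\omega_7r^7$ for every $r>0$: an exact Euclidean ratio, not merely a lower bound.
\item Since ${\rm Ric}_{g_\infty}=0$, the equality case of Bishop--Gromov forces $(M_\infty,g_\infty)$ to be isometric to $\mathbb{R}^7$. This contradicts $|{\rm Rm}_{g_\infty}|(x_\infty)=1$.
\end{enumerate}

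The survey also points to Li's local $L^p$ estimate for ${\rm Rm}$ under bounded Ricci curvature as an alternative, integral route to (b).
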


{
\begin{remark}\label{rem4.7}
    From Proposition 2.4 in \cite{LW17} or $(2.33)$ in \cite{Li21}, when $G_{2}$-structure is closed, $\nabla\mathbf{T}$ can be controlled by Riemann curvature ${\rm Rm}$ and torsion $|\mathbf{T}|$. Note that $|\mathbf{T}|^{2}=-R$, the norm of torsion can be controlled by Riemann curvature. Hence the assumption of $(a)$ can be replaced by uniformly bounded Riemann curvature.
\end{remark}}

In a series of papers \cite{LW17,LW19A,LW19B}, they also established many foundational results, including the stability of torsion-free $G_2$-structures, the real-analyticity of closed $G_2$-structures under the Laplacian flow, among others. Based on the following curvature estimates, the second author \cite{Li21} later provided a new and simplified proof of part $(b)$ in Theorem \ref{thm4.5}.

\begin{theorem}[\cite{Li21}]
Let $M$ be a smooth $7$-manifold and $(\varphi(t))_{t\in[0,T)}$ be a solution to the flow \eqref{The closed Laplacian flow} for closed $G_{2}$-structures {with $T< +\infty$}. Assume there exist constants $A, K>0$ and a point $x_{0}\in M$ such that the geodesic ball $B_{g}(x_{0}, A/\sqrt{K})$ is compactly contained in $M$ { with $g=g_{\varphi(0)}$} and that
\begin{equation*}
|{\rm Ric}_{\varphi(t)}|_{\varphi(t)}\leq K \ \ \ \text{on} \
{B_{g}(x_{0},A/\sqrt{K})}
\times[0,T].
\end{equation*}
Then, for any {$p\geq5$}, there exists $c=c(p)>0$ so that, for
all $t\in[0,T]$,
$$
\int_{{B_{g}(x_{0},A/2\sqrt{K})}}
|{\rm Rm}_{\varphi(t)}|^{p}_{\varphi(t)}dV_{\varphi(t)} \ \leq \ c(1+K)
e^{cKT}\int_{{B_{g}(x_{0},A/\sqrt{K})}}
|{\rm Rm}_{\varphi}|^{p}_{\varphi}dV_{\varphi}
$$
$$
+ \ c K^{p}\left(1+A^{-2p}\right) e^{cKT}
{\rm Vol}_{\varphi(t)}({B_{g}
(x_{0},A/\sqrt{K})}).
$$
\end{theorem}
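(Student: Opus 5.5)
The plan is to run a localized $L^{p}$ energy argument in the style of Kotschwar--Munteanu--Wang \cite{KMW16}, adapted to the closed Laplacian flow. Along \eqref{The closed Laplacian flow} the metric evolves by
$$\partial_t g_{ij}=-2{\rm Ric}_{ij}-\tfrac{2}{3}|\mathbf{T}|^2g_{ij}-4\mathbf{T}_i^{\ k}\mathbf{T}_{kj},$$
and we have $|\mathbf{T}|^2=-R\le \sqrt{7}\,|{\rm Ric}|\le CK$. So the hypothesis $|{\rm Ric}|\le K$ gives $|\partial_t g|\le CK$ on the ball. Consequences: the metrics $g(t)$, $t\in[0,T]$, are uniformly equivalent to $g$ with constant $e^{CKT}$; volume forms satisfy $|\partial_t dV|\le CK\,dV$; and a cutoff built from $d_g(x_0,\cdot)$ has $g(t)$-gradient controlled by $e^{CKT}$. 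Concretely, take $\eta=\phi(d_g(x_0,x)\sqrt{K}/A)$ with $\phi=1$ on $[0,1/2]$ and $\phi=0$ on $[1,\infty)$, so that $|\nabla\eta|_{g(t)}^2\le C K A^{-2}e^{CKT}$.

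Next we need the evolution of ${\rm Rm}$ in the form used by Lotay--Wei \cite{LW17}:
$$\partial_t {\rm Rm}=\Delta {\rm Rm}+{\rm Rm}*{\rm Rm}+{\rm Rm}*\mathbf{T}*\mathbf{T}+\nabla\mathbf{T}*\nabla\mathbf{T}+\nabla^2\mathbf{T}*\mathbf{T}.$$
By Remark \ref{rem4.7}, for closed structures $\nabla\mathbf{T}={\rm Rm}*\varphi+\mathbf{T}*\mathbf{T}*\psi$ schematically, and $\nabla^2\mathbf{T}$ is expressed through $\nabla{\rm Rm}$ and lower-order terms. Rather than the full Riemann tensor, the right quantity is the divergence identity $\nabla^{l}R_{ijkl}=\nabla_i{\rm Ric}_{jk}-\nabla_j{\rm Ric}_{ik}$. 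Following \cite{KMW16}, we will evolve ${\rm Rm}$ but integrate by parts so that only terms like ${\rm Ric}*{\rm Rm}$ and $\nabla{\rm Ric}$ appear where a $|{\rm Rm}|^2$ factor would be fatal.

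The core computation is then
$$\frac{d}{dt}\int \eta^{2p}|{\rm Rm}|^p\,dV.$$
We would carry it out in three steps.
\begin{itemize}
\item[(1)] Write the evolution of $|{\rm Rm}|^2$ using $\partial_t g$. The quadratic term ${\rm Rm}*{\rm Rm}*{\rm Rm}$ should cancel into ${\rm Ric}*{\rm Rm}*{\rm Rm}$ after using the second Bianchi identity. This is precisely the KMW mechanism: one writes $\Delta{\rm Rm}=\nabla\nabla{\rm Ric}+{\rm Rm}*{\rm Rm}$, and the bad cubic term is traded for $\nabla{\rm Ric}$ terms via integration by parts.
\item[(2)] Handle the torsion terms. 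Using $|\mathbf{T}|^2\le CK$ and the Remark \ref{rem4.7} expression of $\nabla\mathbf{T}$, every torsion contribution becomes $CK|{\rm Rm}|^p+CK^2|{\rm Rm}|^{p-1}+\ldots$, which Young's inequality absorbs into $CK|{\rm Rm}|^p+CK^p$.
\item[(3)] Handle the $\nabla{\rm Ric}$ and cutoff terms. The good gradient term $-\int\eta^{2p}|{\rm Rm}|^{p-2}|\nabla{\rm Rm}|^2$ absorbs them, leaving $CKA^{-2}\int\eta^{2p-2}|{\rm Rm}|^{p-1}$. This is then bounded by Young's inequality by $\int\eta^{2p}|{\rm Rm}|^p+CK^pA^{-2p}{\rm Vol}$.
\end{itemize}
The restriction $p\ge5$ should enter exactly where the coefficient of the gradient term must dominate the constants produced by the divergence trick.

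Collecting these gives
$$\frac{d}{dt}\int\eta^{2p}|{\rm Rm}|^p\le cK\int\eta^{2p}|{\rm Rm}|^p+cK^p(1+A^{-2p}){\rm Vol}_{\varphi(t)}(B_g(x_0,A/\sqrt K)).$$
Gronwall's inequality, together with comparability of the volumes over time, yields the stated bound. The extra factor $(1+K)$ in front of the initial term comes from the time-dependence of the cutoff's norm and of the volume element.

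The main obstacle is step (1)–(3): verifying that after integrating by parts, no uncontrolled term of the form $|{\rm Rm}|^{p+1}$ survives. Also, the mixed terms $\nabla^2\mathbf{T}*\mathbf{T}$ contribute $\mathbf{T}*\nabla{\rm Rm}$, which we must pair with the good gradient term. This costs $K|{\rm Rm}|^{p}$, fine, but the algebra requires care to see that the constants close for $p\ge5$. First I would try to follow \cite{KMW16} verbatim, treating every torsion term as a lower-order perturbation of size $\sqrt K$ per $\mathbf{T}$.
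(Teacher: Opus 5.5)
Your proposal has a genuine gap: the bookkeeping in steps (1)--(3) cannot close as written. The cubic term disappears only if you use the first-variation identity $\partial_t{\rm Rm}=\nabla^2h+{\rm Rm}*h$ with $h=\partial_tg$. For the Ricci flow this gives $\nabla^2{\rm Ric}+{\rm Rm}*{\rm Ric}$ exactly; nothing is ``traded by integration by parts''. In that form there is no Laplacian, and hence no good term $-\int\eta^{2p}|{\rm Rm}|^{p-2}|\nabla{\rm Rm}|^2$. If you keep $\Delta{\rm Rm}$ to produce that term, $\int\eta^{2p}|{\rm Rm}|^{p+1}$ comes back with it. Step (3) uses both at once.

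There is a second problem in the same step. Integrating $\langle|{\rm Rm}|^{p-2}{\rm Rm},\nabla^2{\rm Ric}\rangle$ by parts leaves $\int\eta^{2p}|{\rm Rm}|^{p-2}|\nabla{\rm Rm}||\nabla{\rm Ric}|$. Nothing in your outline controls $|\nabla{\rm Ric}|^2$, and the hypotheses give no bound on it.

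The missing idea from \cite{KMW16} is to add weighted auxiliary functionals:
\begin{itemize}
\item $K\int\eta^{2p}|{\rm Rm}|^{p-1}$, whose Laplacian-form evolution is harmless one level down (the cubic term costs only $CK\int\eta^{2p}|{\rm Rm}|^{p}$) and supplies $-K\int\eta^{2p}|{\rm Rm}|^{p-3}|\nabla{\rm Rm}|^2$;
\item a quantity such as $K^{-1}\int\eta^{2p}|{\rm Ric}|^2|{\rm Rm}|^{p-1}$, whose heat-type equation supplies $-K^{-1}\int\eta^{2p}|\nabla{\rm Ric}|^2|{\rm Rm}|^{p-1}$.
\end{itemize}
The mixed term is then split between these two. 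Constraints on $p$ come from requiring exponents like $p-3$ in these auxiliary terms to be nonnegative. They do not come from comparing coefficients, which can always be arranged by weighting.

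Separately, the torsion terms are not lower order, so step (2) fails. $\nabla\mathbf{T}*\nabla\mathbf{T}$ contains no undifferentiated $\mathbf{T}$. By Remark \ref{rem4.7} it equals ${\rm Rm}*{\rm Rm}$ up to terms involving $\mathbf{T}$, so it contributes $\int\eta^{2p}|{\rm Rm}|^{p+1}$, which is exactly the term the method exists to avoid. Likewise $\mathbf{T}*\nabla^2\mathbf{T}$ produces $\sqrt{K}\,|{\rm Rm}|^{p-1}|\nabla{\rm Rm}|$. Against the gradient term that actually exists, $K|{\rm Rm}|^{p-3}|\nabla{\rm Rm}|^2$, Young's inequality again leaves $|{\rm Rm}|^{p+1}$. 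Counting $\sqrt K$ per $\mathbf{T}$ does not rescue this.

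What is needed is the following:
\begin{itemize}
\item Apply the first-variation formula with the full $h=-2{\rm Ric}+\mathbf{T}*\mathbf{T}$, so that every second-order torsion term sits inside $\nabla^2h$.
\item Integrate by parts once, so that only $\nabla h=-2\nabla{\rm Ric}+\mathbf{T}*\nabla\mathbf{T}$ appears. This produces $\sqrt K\,|\nabla\mathbf{T}||\nabla{\rm Rm}||{\rm Rm}|^{p-2}$ plus cutoff terms.
\item Close this with a further coercive term $\int\eta^{2p}|\nabla\mathbf{T}|^2|{\rm Rm}|^{p-1}$, for instance from an auxiliary functional built on the heat-type equation for $|\mathbf{T}|^2$, which has a good term $-c|\nabla\mathbf{T}|^2$.
\end{itemize}
The same torsion terms reappear in the evolution of ${\rm Ric}$ used in the ${\rm Ric}$-functional and must be treated the same way. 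This extra layer is what the passage from the Ricci flow to the Laplacian flow actually requires, and your outline does not supply it.
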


Moreover, Chen \cite{Chen18} investigated the finite-time singularities of the Laplacian flow and established the following blow-up rate for curvatures.

\begin{theorem}[\cite{Chen18}]\label{thm4.9}
Let $M$ be a closed $7$-manifold and $(\varphi(t))_{t\in[0,T)}$ be a solution to the
flow \eqref{The closed Laplacian flow} for closed $G_{2}$-structures {on the maximal time interval $[0,T)$}. If  $T< +\infty$, then
$$
\max_{M}\left(|{\rm Rm}_{\varphi(t)}|^{2}_{\varphi(t)}+|{\bf T}(t)|^{4}_{\varphi(t)}+|\nabla_{\varphi(t)}{\bf T}(t)|^{2}_{\varphi(t)}\right)^{1/2}\geq\frac{C}{T-t}
$$
for some constant $C>0$. Consequently,
$$
\int^{T}_{0}\max_{M}\left(|{\rm Ric}_{\varphi(t)}|_{\varphi(t)}+|{\bf T}(t)|^{2}_{\varphi(t)}\right)dt=+\infty.
$$
\end{theorem}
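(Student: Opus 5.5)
The plan is the standard ODE-comparison argument for blow-up rates of parabolic flows, applied to
$$\Lambda(x,t):=\left(|{\rm Rm}_{\varphi(t)}|^{2}_{\varphi(t)}+|{\bf T}(t)|^{4}_{\varphi(t)}+|\nabla_{\varphi(t)}{\bf T}(t)|^{2}_{\varphi(t)}\right)^{1/2},$$
which is scale-invariant of weight $-2$ (like curvature) under the parabolic rescaling $\varphi\mapsto\lambda^{3}\varphi$, $t\mapsto\lambda^{2}t$.

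\emph{Step 1 (doubling-time estimate).} Compute the evolution equations of ${\rm Rm}$, ${\bf T}$ and $\nabla{\bf T}$ along the closed Laplacian flow, following Lotay--Wei \cite{LW17}. Schematically, ${\bf T}$ evolves by a heat-type equation
$$\partial_t {\bf T}=\Delta{\bf T}+{\rm Rm}*{\bf T}+{\bf T}*{\bf T}*{\bf T}+\nabla{\bf T}*{\bf T},$$
and ${\rm Rm}$ evolves by
$$\partial_t{\rm Rm}=\Delta{\rm Rm}+{\rm Rm}*{\rm Rm}+\text{(terms in }\nabla^2{\bf T},\ \nabla{\bf T}*{\bf T},\ {\rm Rm}*{\bf T}*{\bf T},\dots).$$
Each reaction term has total weight $-3$ in the quantities making up $\Lambda^{2}$. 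The $\nabla^{2}{\bf T}$ terms have to be absorbed by the good gradient terms $-2|\nabla{\rm Rm}|^{2}$ and $-2|\nabla^{2}{\bf T}|^{2}$, using the identity in Remark \ref{rem4.7} that expresses $\nabla{\bf T}$ (and hence $\nabla^{2}{\bf T}$, modulo lower order) through ${\rm Rm}$ and ${\bf T}$ for closed structures. The goal is the differential inequality
$$\left(\partial_t-\Delta\right)\Lambda^{2}\leq C_0\Lambda^{3}$$
for a universal constant $C_0$, possibly after adding suitable multiples so that the cross terms close up. With the maximum principle this gives $\frac{d}{dt}\Lambda_{\max}\le \frac{C_0}{2}\Lambda_{\max}^{2}$ in the barrier sense, where $\Lambda_{\max}(t)=\max_M\Lambda(\cdot,t)$.

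\emph{Step 2 (integration).} Integrating from $t$ to $s<T$ gives
$$\frac{1}{\Lambda_{\max}(t)}-\frac{1}{\Lambda_{\max}(s)}\le \frac{C_0}{2}(s-t).$$
Since $T<\infty$ is maximal, Theorem \ref{thm4.5}(a) forces $\limsup_{s\to T}\Lambda_{\max}(s)=\infty$: otherwise ${\rm Rm}$ and $\nabla{\bf T}$ stay bounded and the flow extends. Letting $s\to T$ along a sequence yields $\Lambda_{\max}(t)\ge \frac{2}{C_0(T-t)}$.

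\emph{Step 3 (the integral statement).} We need to bound $\Lambda$ by the Ricci curvature and the torsion. For a closed $G_2$-structure, ${\rm Ric}$ is given in terms of $\nabla{\bf T}$ and ${\bf T}*{\bf T}$ (Lotay--Wei), so $|\nabla{\bf T}|\lesssim|{\rm Ric}|+|{\bf T}|^{2}$. The full ${\rm Rm}$, however, is not pointwise controlled by ${\rm Ric}$. The cleaner route is therefore to argue by contradiction. Suppose
$$\int_0^T\max_M\left(|{\rm Ric}|+|{\bf T}|^{2}\right)dt<\infty.$$
Since $\partial_t g=-2{\rm Ric}+{\bf T}*{\bf T}$, the metrics $g(t)$ are uniformly equivalent on $[0,T)$. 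Then, mimicking the Ricci-flow argument of Šešum and Wang, a blow-up at the rate from Step 2 produces a limit flow with vanishing Ricci curvature and vanishing torsion at the final time yet with $|{\rm Rm}|=1$ somewhere. Here one needs compactness for Laplacian flows (Lotay--Wei) and noncollapsing coming from the uniform metric equivalence. This contradicts the integral bound combined with the Shi-type estimates.

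The main obstacle is Step 1: closing the evolution inequality for the combined quantity. The $\nabla^{2}{\bf T}$ contributions in the evolution of ${\rm Rm}$ must be handled via the gradient terms with the right weights, and I would attempt this first by working with $|{\rm Rm}|^{2}+A|\nabla{\bf T}|^{2}+B|{\bf T}|^{4}$ with tuned constants $A,B$. A secondary difficulty is the contradiction argument in Step 3, which relies on compactness theory for Laplacian flows.
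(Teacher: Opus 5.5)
Your Steps 1 and 2 are fine: this is the standard doubling-time and ODE-comparison argument behind the cited blow-up rate. Step 1 is easier than you fear in the closed case. Since $|{\bf T}|^{2}=-R\le C|{\rm Rm}|$ and $|\nabla{\bf T}|\le C(|{\rm Rm}|+|{\bf T}|^{2})$ (Remark \ref{rem4.7}), $\Lambda$ is comparable to $|{\rm Rm}|$. The top-order cross terms $\nabla^{2}{\bf T}*{\bf T}*{\rm Rm}$ and $\nabla{\rm Rm}*{\bf T}*\nabla{\bf T}$ are then absorbed by Cauchy--Schwarz into $-2|\nabla{\rm Rm}|^{2}-2|\nabla^{2}{\bf T}|^{2}$.

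The genuine gap is in Step 3. Your blow-up limit is a complete, static, torsion-free (hence Ricci-flat) $G_2$-manifold with $|{\rm Rm}_\infty|(x_\infty,0)=1$. Uniform equivalence of the metrics also gives it ${\rm Vol}\,B(x,r)\ge c\,r^{7}$ for all $r$. None of this is contradictory. There are complete, non-flat, torsion-free $G_2$-manifolds with bounded curvature and Euclidean volume growth, for example the Bryant--Salamon metrics, or $\mathbb{R}^{3}$ times the Eguchi--Hanson space. Invoking ``the integral bound combined with the Shi-type estimates'' does not help, because the integral bound has already been used to make ${\rm Ric}_\infty$ and ${\bf T}_\infty$ vanish. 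What is missing is an argument that the limit is flat.

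The missing idea is that the finite integral gives more than uniform equivalence. Since $|\partial_t g|\le C(|{\rm Ric}|+|{\bf T}|^{2})$ and $\int_0^T\max_M(|{\rm Ric}|+|{\bf T}|^{2})\,dt<\infty$, you get $g(t)\to g(T)$ uniformly, with $g(T)$ a \emph{continuous} Riemannian metric. By continuity and compactness of $M$, for each $\epsilon>0$ there is $\rho>0$ such that ${\rm Vol}_{g(t)}B_{g(t)}(x,r)\ge(1-\epsilon)\omega_7 r^{7}$ for all $x\in M$, all $r\le\rho$, and all $t$ close to $T$. After rescaling by $Q_i\to\infty$ this holds for $r\le\rho\sqrt{Q_i}$. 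Hence the limit satisfies ${\rm Vol}\,B(x_\infty,r)\ge\omega_7 r^{7}$ for every $r$. Bishop--Gromov rigidity with ${\rm Ric}_\infty=0$ then forces the limit to be flat, contradicting $|{\rm Rm}_\infty|(x_\infty,0)=1$.

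To set up the blow-up, you do not need the rate from Step 2. Choose $(x_i,t_i)$ with $\Lambda(x_i,t_i)=\max_{M\times[0,t_i]}\Lambda=:Q_i$. Then use that $\int\max_M(|{\rm Ric}|+|{\bf T}|^{2})\,dt$ is invariant under parabolic rescaling, so its contribution from $[t_i-A/Q_i,t_i]$ tends to zero. This is what yields ${\rm Ric}_\infty\equiv0$ and ${\bf T}_\infty\equiv0$ on the whole limit interval.
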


When the $G_{2}$-structure is closed, from Remark \ref{rem4.7}, some assumptions of Theorem \ref{thm4.9} can be replaced by Riemann curvature or Ricci curvature.
%the Riemann curvature and Ricci curvature are related to the first covariant derivative of the torsion tensor, while the scalar curvature is related to the norm of the torsion tensor. 
To find the optimal condition under which Laplacian flow can be extended. Lotay and Wei \cite{LW17} propose the following {Question}:

\begin{question}[\cite{LW17}]\label{conj4.8}
Let $M$ be a closed $7$-manifold and $(\varphi(t))_{t\in[0,T)}$ be a solution to the
flow \eqref{The closed Laplacian flow} for closed $G_{2}$-structures {on the maximal time interval $[0,T)$}.
    If $T< +\infty$, then 
    $$\liminf_{t\to T}(\min_{M}R_{\varphi(t)})=-\infty.$$
\end{question}

{There are some positive answers about this question on some special manifolds.}
Fine and Yao \cite{FY18} proved the case where $M^{7}=\mathbb{T}^{3}\times X^{4}$ where $X^{4}$ is equipped with a hypersymplectic structure, $\mathbb{T}^{3}$ is the standard $3$-dimensional torus.
For the Laplacian flow on $\mathbb{ T}^{3}\times\mathcal{X}^{4}$, or on $\mathbb{S}^{1}\times\mathcal{X}^{6}$ (where $\mathcal{X}^{2n}$ is a Calabi-Yau $n$-fold), Picard and Suan \cite{PS24} established a relation between the Laplacian flow and complex Monge-Amp\`ere flow
    $$   \partial_{t}u(t)=3\left[e^{-2\ln|\Omega|_{\omega}}\frac{\det(\omega+\sqrt{-1}\partial\overline{\partial}u(t))}{
    \det\omega}\right]^{1/3}, \ \ \ u(0)=0,
    $$
    where $(\omega,\Omega)$ is the (K\"ahler metric, holomorphic volume form) on $\mathcal{X}^{4}$ or $\mathcal{X}^{6}$. {In this case, they proved the convergence of the Laplacian flow.}

The evolution equation for $g(t)$ is given by
$$
\partial_{t}g(t)=-2{\rm Ric}_{\varphi(t)}-\frac{4}{3}|{\bf T}(t)|^{2}_{\varphi(t)}g(t)
-4{\bf T}_{i}{}^{k}{\bf T}_{kj}.
$$
For the general case, the second author \cite{Li21} calculated the evolution equation of scalar curvature.
\begin{prop} [\cite{Li21}]
Under the flow \eqref{The closed Laplacian flow}, the evolution equation of  scalar curvature $R_{\varphi(t)}$ satisfies
\begin{align}
(\partial_{t}-\triangle_{\varphi(t)})R_{\varphi(t)} = \ & \bigg\{2\left|R_{ij}+\frac{1}{3}
|{\bf T}(t)|^{2}_{\varphi(t)}g_{ij}\right|^{2}_{\varphi(t)}
+\frac{1}{2}\bigg|R_{ijab}R^{ij}{}_{mn}
-\psi_{abmn}\bigg|^{2}_{\varphi(t)}\notag\\
&+\frac{1}{2}
\left|2{\bf T}_{ia}{\bf T}_{jb}R^{ij}{}_{mn}
-\psi_{abmn}\right|^{2}_{\varphi(t)}
+\frac{1}{2}\left|2\widehat{{\bf T}}_{am}\widehat{{\bf T}}_{bn}
-\psi_{abmn}\right|^{2}_{\varphi(t)}\nonumber\notag\\
&+ \ 2|\widehat{{\bf T}}(t)|^{2}_{\varphi(t)}+4|\nabla_{\varphi(t)}
{\bf T}(t)|^{2}_{\varphi(t)}\bigg\}
-\bigg\{|{\rm Rm}_{\varphi(t)}|^{2}_{\varphi(t)}
+\frac{26}{9}R^{2}_{\varphi(t)}\notag\\
&+\frac{1}{2}
\left|R_{ijab}R^{ij}{}_{mn}\right|^{2}_{\varphi(t)}+2\left|{\bf T}_{ia}{\bf T}_{jb}R^{ij}{}_{mn}
\right|^{2}_{\varphi(t)}\notag\\
&+2|\widehat{{\bf T}}(t)|^{4}_{\varphi(t)}+210\bigg\}.\notag
\end{align}
Here $\triangle$ is the Beltrami-Laplace operator, $\psi=\ast_{\varphi}
\varphi$, and $\widehat{\boldsymbol{T}}_{ij}:=\boldsymbol{T}_{ik}
\boldsymbol{T}^{k}{}_{j}$.
\end{prop}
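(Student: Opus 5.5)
The plan is to compute $\partial_t R$ directly from the evolution of the metric, then reorganize the result into the stated sum of squares minus correction terms. For a closed $G_2$-structure we have $\mathbf{T}=-\frac12\tau$ with $\tau\in\Omega^2_{14}$. The scalar curvature is
$$R=-|\mathbf{T}|^2=-g^{ia}g^{jb}\mathbf{T}_{ij}\mathbf{T}_{ab}.$$
So I would rather differentiate this identity than apply the general variation formula for $R$. Differentiating it needs two inputs:
\begin{itemize}
\item the evolution of $g$, which is already stated: $\partial_t g_{ij}=-2R_{ij}-\frac43|\mathbf{T}|^2g_{ij}-4\widehat{\mathbf{T}}_{ij}$;
\item the evolution of $\mathbf{T}$ under the Laplacian flow, as computed by Lotay--Wei \cite{LW17}.
\end{itemize}
The second input has the form
$$\partial_t\mathbf{T}_{ij}=\Delta\mathbf{T}_{ij}+\mathrm{Rm}*\mathbf{T}+\mathrm{Ric}*\mathbf{T}+\nabla\mathbf{T}*\mathbf{T}*\varphi+\mathbf{T}*\mathbf{T}*\mathbf{T}*\psi.$$
I would write it out explicitly, using $\nabla\varphi=\mathbf{T}*\psi$ and the Bianchi-type identity $\nabla_i\mathbf{T}_{jl}-\nabla_j\mathbf{T}_{il}=\frac12R_{ijmn}\varphi^{mn}{}_l+(\mathbf{T}*\mathbf{T}*\varphi)$.

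Contracting gives $(\partial_t-\Delta)R=2|\nabla\mathbf{T}|^2+(\text{quartic and curvature--quadratic terms})$. Several contributions then need handling:
\begin{itemize}
\item The metric variation contributes $2R_{ij}\mathbf{T}^{i}{}_k\mathbf{T}^{jk}$-type terms, plus $\frac{8}{3}|\mathbf{T}|^4$ and $|\widehat{\mathbf{T}}|^2$ terms.
\item The $\mathrm{Rm}*\mathbf{T}*\mathbf{T}$ terms I would rewrite using the closed-$G_2$ identities $\mathrm{Ric}_{ij}=\nabla_l\mathbf{T}_{li}\cdot(\ldots)-\widehat{\mathbf{T}}$-type formulas from \cite{LW17}. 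In particular $R_{ij}=-(\nabla\mathbf{T})*\varphi-\widehat{\mathbf{T}}_{ij}$ and $\mathrm{div}\,\mathbf{T}=0$ (since $\tau$ is coclosed).
\item A second copy of $|\nabla\mathbf{T}|^2$ then appears: $\Delta|\mathbf{T}|^2$ produces $2|\nabla\mathbf{T}|^2$, and the Ricci identity produces an additional $2|\nabla\mathbf{T}|^2$. Together with the $\mathrm{Ric}$-term this explains the coefficient $4$ in the final formula.
\end{itemize}

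Next I would bring the expression into the stated form by completing squares. Every mixed term of the form $\langle X,\psi\rangle$ or $\langle X, g\rangle$ is rewritten as
$$\tfrac12|X-\psi|^2-\tfrac12|X|^2-\tfrac12|\psi|^2 .$$
The leftover constants are then summed. Here $|\psi|^2=7\cdot24=168$ in the full-tensor norm, or $7$ as a form, and the normalization must be tracked. The constant $210$ should arise from three such completions, $3\cdot\frac12|\psi|^2$ (with the tensor norm $|\psi|^2=140$?), plus the $|g|^2=7$ contribution from the Ricci square $2|R_{ij}+\frac13|\mathbf{T}|^2g_{ij}|^2$, which produces $-\frac{2}{9}\cdot7R^2$ and relates to the $\frac{26}{9}R^2$ coefficient. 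Concretely, for the terms involving $\mathbf{T}*\mathbf{T}*\mathrm{Rm}$, $\mathrm{Rm}*\mathrm{Rm}$ and $\widehat{\mathbf{T}}*\widehat{\mathbf{T}}$ contracted against $\psi$, I would pair each with $\psi_{abmn}$ exactly as written in the statement.

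The main obstacle is this bookkeeping step. The crucial identities are $\varphi_{ijk}\varphi_{abk}=g_{ia}g_{jb}-g_{ib}g_{ja}-\psi_{ijab}$ and $\tau\in\Omega^2_{14}$, i.e.\ $\mathbf{T}_{ij}\varphi^{ij}{}_k=0$ and $\mathbf{T}_{ab}\psi^{ab}{}_{ij}=2\mathbf{T}_{ij}$ up to sign. These let the contractions of $\psi$ with the quadratic expressions be identified, so that each $\psi$-contraction term matches the cross term of one of the squares. I would first verify the signs and constants on the flat-torus example with a linear closed $G_2$-structure, or on a nilmanifold example with explicit torsion, as a consistency check before fixing the constant $210$ and the coefficient $\frac{26}{9}$.
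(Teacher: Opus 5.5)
There is a genuine gap, and it lies in the step you defer as bookkeeping. The survey only quotes this identity from \cite{Li21}, so its whole content is the exact list of terms and coefficients. Your outline derives none of them: differentiating $R=-|\mathbf T|^2$ with $\partial_t g$ and Lotay--Wei's $\partial_t\mathbf T$, then completing squares against $\psi$, is a reasonable route, but it stops before any coefficient is fixed.

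Moreover, the sanity check you propose shows the computation cannot close on the displayed formula. Take the flat $\mathbb{T}^7$ with constant $\varphi$. It is torsion-free, hence stationary, so $R\equiv 0$ and the left side vanishes. On the right every term vanishes except the three completed squares, each equal to $\frac12|\psi|^2$, so the right side is $\frac32|\psi|^2-210$. With $|\psi|^2=\psi_{abmn}\psi^{abmn}=168$ (your value, which is correct) this equals $42$. It is also nonzero for the form norm ($|\psi|^2=7$) and for the $\Lambda^2\otimes\Lambda^2$ norm ($|\psi|^2=42$). Your ``$|\psi|^2=140$?'' is reverse-engineered from $3\cdot\frac12|\psi|^2=210$ and matches no normalization.

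Parabolic scaling gives the same conclusion. After expanding the squares, the $\frac12|X|^2$ pieces cancel against the subtracted terms, and everything else has the same scaling degree as $(\partial_t-\triangle)R$, so the leftover constant must be $0$. The Ricci square cannot supply a constant, since $2|R_{ij}+\frac13|\mathbf T|^2g_{ij}|^2=2|\mathrm{Ric}|^2+\frac29R^2$; the $|g|^2=7$ only enters the $R^2$ coefficient. So carrying out your plan correctly yields $\frac32|\psi|^2$ (i.e.\ $252$) where the statement has $210$; it cannot prove the statement as written.

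The second missing piece is where the $\mathbf T$-free terms $|\mathrm{Rm}|^2$ and $R_{ijab}R^{ij}{}_{mn}\psi^{abmn}$ come from. In your route $\mathrm{Rm}$ enters only through $\mathrm{Rm}*\mathbf T*\mathbf T$ and through $\mathrm{Ric}=\nabla\mathbf T*\varphi+\mathbf T*\mathbf T$. These two terms are therefore not cross terms of anything you compute. They can only be added and subtracted, which requires their combination to be expressible through $\mathbf T$ and $\nabla\mathbf T$.

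The identity $\nabla_i\mathbf T_{jk}-\nabla_j\mathbf T_{ik}=\mathbf T_{ia}\mathbf T_{jb}\varphi_{abk}+\frac12R_{ijab}\varphi_{abk}$ controls only the $\Lambda^2_7$-projection of $\mathrm{Rm}$; the $S^2(\Lambda^2_{14})$-part is unconstrained. The map $\beta_{ab}\mapsto\psi_{abmn}\beta^{mn}$ acts on $\Lambda^2_{14}$ by $\pm2$, depending on the orientation convention. Hence with full contractions, $|\mathrm{Rm}|^2+R_{ijab}R^{ij}{}_{mn}\psi^{abmn}=(1\pm2)|\mathrm{Rm}|^2\neq0$ on a non-flat torsion-free structure such as $K3\times\mathbb{T}^3$, where the left side is again $0$. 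Your claim that each $\psi$-contracted term ``matches the cross term of one of the squares'' is exactly what needs proof. It fails unless you first fix normalizations in which the right side vanishes identically on torsion-free structures.

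Two smaller points:
\begin{itemize}
\item $\frac12|X-\psi|^2-\frac12|X|^2-\frac12|\psi|^2=-\langle X,\psi\rangle$, not $+\langle X,\psi\rangle$.
\item The Ricci identity does not produce a second $2|\nabla\mathbf T|^2$. In your route the lower-order part of $\partial_t\mathbf T$ is at most linear in $\nabla\mathbf T$, so $|\nabla\mathbf T|^2$ arises only from $\triangle|\mathbf T|^2$. The coefficient $4$ can appear only after rewriting $2|\mathrm{Ric}|^2$ and the $\Lambda^2_7$-part of $\mathrm{Rm}$ via the identity above.
\end{itemize}
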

{
\begin{remark}
    The absolute constant $210$ on the right-hand side of the evolution equation is obtained from the contraction of $\psi\ast\psi$ by the metric $g$.
\end{remark}
}
{
The above proposition shows that the evolution equation for $R_{\varphi(t)}$ can be written as 
$$
(\partial_{t}-\triangle_{\varphi(t)})R_{\varphi(t)}=A(t)-B(t)
$$
For some suitable time-dependent, nonnegative functions $A(t)$ and $B(t)$. By the maximum principle, we obtain
\begin{eqnarray*}
    \max_{M}R_{\varphi(0)}+\int_{0}^{t}\max_{M}[A(\tau)-B(\tau)]d\tau&\geq&R_{\varphi(t)}\\
    &\geq&\min_{M}R_{\varphi(0)}+\int_{0}^{t}\min_{M}[A(\tau)-B(\tau)]d\tau.
\end{eqnarray*}}
Observed that the above well-arranged evolution equation can give us a {weak} lower bound for $R_{\varphi(t)}$, which can not prove or disprove the Question \ref{conj4.8} of Lotay and Wei. 

Therefore, {inspired} by the method in \cite{Cao11} and the curvature decomposition
$$
{\rm Rm}=\frac{R}{84}g\circ g+\frac{1}{5}E\circ g+W,
$$
where $W$ is the Weyl tensor, and $E$ is the Einstein tensor or traceless Ricci tensor {which} is defined as
$$
E_{ij}=R_{ij}-\frac{R}{7}g_{ij}.
$$
{We} obtain singularities {that} satisfy the following cases:

\begin{theorem}[\cite{LL23}]\label{thm4.10}
Let $M$ be a {closed} $7$-manifold and $(\varphi(t))_{t\in[0,T)}$ be a solution to the
flow \eqref{The closed Laplacian flow} for closed $G_{2}$-structures. If $T$  is the maximal time and $T< +\infty$, then either
$$
\liminf_{t\to T}\left(\min_{M}R_{\varphi(t)}\right)=-\infty,
$$
or there exists a universal positive constant {$c$} {such that}
$$
R_{\varphi(t)}+{c}>0 \ \ \ \text{and} \ \ \ \limsup_{t\to T}\left(\max_{M}\frac{|W_{\varphi(t)}|_{C^{1}(g(t))}}{R_{\varphi(t)}+{c}}\right)=+\infty.
$$
Here $W_{\varphi(t)}$ is the Weyl tensor of $g_{\varphi(t)}$.
\end{theorem}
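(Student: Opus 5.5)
We argue by contradiction. Suppose $T<+\infty$ is maximal, that $R_{\varphi(t)}\geq -C_0$ on $M\times[0,T)$, and that with $c$ fixed so that $R_{\varphi(t)}+c\geq 1$ we have
$$|W_{\varphi(t)}|_{C^1(g(t))}\leq C_1\,(R_{\varphi(t)}+c)$$
on $M\times[0,T)$. We aim to show that ${\rm Ric}_{\varphi(t)}$ stays uniformly bounded on $[0,T)$. Part (b) of Theorem \ref{thm4.5} then extends the flow past $T$, which contradicts maximality.

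First we use closedness. Since $|{\bf T}|^2=-R\geq 0$, the lower bound $R\geq -C_0$ gives $|{\bf T}|^2\leq C_0$. We also have $R\leq 0$, so $1\leq R+c\leq c$. The hypothesis therefore says $|W|+|\nabla W|\leq C$ uniformly. By the decomposition ${\rm Rm}=\frac{R}{84}g\circ g+\frac15 E\circ g+W$, the full curvature is then controlled by $|E|$ plus bounded terms.

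Following Cao \cite{Cao11}, the next step is a maximum principle argument for
$$h=\frac{|E|^2}{(R+c)^{2}},$$
or more simply for $|E|^2$ itself, since $R+c$ is now pinched between $1$ and $c$. We compute $(\partial_t-\triangle)|{\rm Ric}|^2$ from the evolution of $g$,
$$\partial_t g=-2{\rm Ric}-\tfrac43|{\bf T}|^2g-4\widehat{\bf T},$$
together with the proposition of \cite{Li21} for $(\partial_t-\triangle)R$. The result has the schematic form
$$(\partial_t-\triangle)|E|^2\leq -2|\nabla E|^2+C\,{\rm Rm}*E*E+C\,|{\bf T}|^2|\nabla{\bf T}||E|+C|\nabla^2{\bf T}|\,|E|+\cdots.$$

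The key structural point is the cubic term ${\rm Rm}*E*E$. Substituting the decomposition of ${\rm Rm}$ and arguing as in Hamilton and Cao, it splits into three pieces:
\begin{itemize}
\item a term $R|E|^2$, which is harmless because $|R|$ is bounded;
\item a term $W*E*E$, bounded by $C|E|^2$ thanks to the Weyl bound;
\item an algebraic cubic ${\rm tr}(E^3)$, which has no sign.
\end{itemize}
The torsion terms are handled with Remark \ref{rem4.7}: $\nabla{\bf T}$ is linear in ${\rm Rm}$ and ${\bf T}$, hence $|\nabla {\bf T}|\leq C(|E|+1)$. The second derivative $\nabla^2{\bf T}$ is controlled by $\nabla{\rm Rm}$, and hence by $\nabla E$ and $\nabla W$. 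The $\nabla E$ part is absorbed into the good term $-2|\nabla E|^2$, and $|\nabla W|$ is bounded by hypothesis. This is exactly why the $C^1$ norm of $W$ appears in the statement.

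The main obstacle is the cubic ${\rm tr}(E^3)$, which a priori could blow up $|E|$ in finite time. To handle it I would not estimate $|E|^2$ alone. Instead I would use the scalar curvature equation from \cite{Li21}, whose good part contains $2|R_{ij}+\frac13|{\bf T}|^2g_{ij}|^2\approx 2|E|^2$, and consider the combination
$$F=\frac{|E|^2}{(R+c)^{\gamma}}$$
or $|E|^2+K R$, following Cao's pinching trick. The quantity $R$ is bounded, while its evolution produces the positive quantity $|E|^2$; the goal is to turn this into an inequality of the form
$$(\partial_t-\triangle)F\leq C F+C$$
at a maximum point. From there ODE comparison on the finite interval $[0,T)$ bounds $F$, hence $|E|$, hence ${\rm Ric}$. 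If the cubic term cannot be absorbed this way, the fallback is a blow-up argument. Rescale at points where $|{\rm Rm}|$ is maximal, using Theorem \ref{thm4.9} for the rate and Li's local $L^p$ estimates (the theorem of \cite{Li21}) for compactness. In the limit, ${\bf T}\to0$ and $W\to0$ with $R\to0$, which forces a torsion-free, flat, nontrivial limit with $|{\rm Rm}|=1$ at the base point, a contradiction.
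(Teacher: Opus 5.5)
Your main line is essentially the paper's proof. The paper runs a Cao-type maximum principle on $|E|^2/(R+c)^{\gamma}$ with $\gamma=2$; the weight turns the positive $|E|^2$ reaction in the evolution of $R$ into a quartic good term that absorbs the cubic curvature terms, which bounds $|E|/(R+c)$ by the $C^1$ norm of $W$, and part (b) of Theorem~\ref{thm4.5} then extends the flow.

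A few caveats on the details. Keep only the multiplicative weight: the additive $|E|^2+KR$ gives at most a quadratic good term, which cannot dominate $|E|^3$. Derive $(\partial_t-\triangle)R\geq c|E|^2-C$ from $(\partial_t-\triangle)|\mathbf{T}|^2=-2|\nabla\mathbf{T}|^2+\cdots$ together with $|{\rm Ric}|\leq C(|\nabla\mathbf{T}|+|\mathbf{T}|^2)$, rather than from the good part of Li's formula alone, whose bad part also contains $|{\rm Rm}|^2$. Finally, your blow-up fallback rests on the wrong compactness input: Li's local $L^p$ estimate presupposes the very Ricci bound you are trying to prove, and one would instead need Lotay--Wei's Shi-type estimates plus a local lifting or non-collapsing argument.
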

\begin{proof}
If there exists a universal positive constant ${c}$ such that $R_{\varphi(t)}+{c}>0$, then by defining
    $$f=\frac{|E_{\varphi(t)}|_{g_{\varphi(t)}}^{2}}{(R_{\varphi(t)}+{c})^{2}}$$
    we can {calculate} the evolution equation of $f$. Using certain technical estimates, we show that the traceless Ricci curvature can be controlled by scalar curvature and the $C^{1}$  norm of the Weyl tensor. More precisely, there exist nonnegative constants $C_{1}$ and $C_{2}$, such that for all $t\in[0,T)$
    \begin{align}\label{4.4)}
        \frac{\displaystyle{\left|{\rm Ric}_{\varphi(t)}-\frac{1}{7}R_{\varphi(t)}\cdot g_{\varphi(t)}\right|_{g_{\varphi(t)}}}}{R_{\varphi(t)}+{c}}\leq C_{1}+C_{2}\max_{M\times[0,t]}\frac{|W_{\varphi(t)}|_{C^{1}(M,g_{\varphi(t)})}}{R_{\varphi(t)}+{c}}.
    \end{align}
    Combining this with the result of Lotay and Wei (Theorem \ref{thm4.5}), we obtain the desired conclusion.
\end{proof}

In fact, Cleyton and Ivanov \cite{CI08} showed that the  Weyl tensor can be decomposed into the following three parts by $G_{2}$-structures:(see also \cite{DGK25} for details)
$$W=W_{77}+W_{64}+W_{27}.$$
Thus, it is worth investigating which part, together with the scalar curvature,  controls the Ricci curvature in Theorem \ref{thm4.10}.

${}$

Building upon the curvature pinching estimate, we establish a partial result to Question \ref{conj4.8}.

\begin{corollary} [\cite{LL23}]
Let $M$ be a closed $7$-manifold and $(\varphi(t))_{t\in[0,T)}$ be a solution to the
flow \eqref{The closed Laplacian flow} for closed $G_{2}$-structures with $T< +\infty$. If the scalar curvature $R_{\varphi(t)}$ and the $C^{1}$  norm of Weyl tensor $W_{\varphi(t)}$ are both uniformly bounded, then the solution $\varphi(t)$ can be extended past time $T$.
\end{corollary}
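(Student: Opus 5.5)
The corollary is a direct consequence of Theorem \ref{thm4.10}, more precisely of the pinching estimate \eqref{4.4)} behind it, combined with part (b) of Theorem \ref{thm4.5}. I argue by contradiction. Suppose $T<+\infty$ is maximal, and suppose there are constants $K_{0},K_{1}$ with $|R_{\varphi(t)}|\leq K_{0}$ and $|W_{\varphi(t)}|_{C^{1}(g(t))}\leq K_{1}$ on $M\times[0,T)$. The plan is to show that ${\rm Ric}_{\varphi(t)}$ is uniformly bounded. Theorem \ref{thm4.5}(b) then extends the flow past $T$, contradicting maximality.

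\emph{Step 1: a positive lower bound for $R+c$.} Since $R$ is bounded below by $-K_{0}$, I choose $c=K_{0}+1$. This gives $R_{\varphi(t)}+c\geq 1>0$ on $M\times[0,T)$, so the first alternative of Theorem \ref{thm4.10} is ruled out.

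\emph{Step 2: control of the Weyl ratio.} From Step 1 and the assumption on $W$,
$$\frac{|W_{\varphi(t)}|_{C^{1}(g(t))}}{R_{\varphi(t)}+c}\leq K_{1},$$
so the second alternative of Theorem \ref{thm4.10} also fails. Hence $T=+\infty$.

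\emph{Step 3: making the Ricci bound explicit.} To see the mechanism concretely, I insert the Weyl bound into \eqref{4.4)}. For all $t\in[0,T)$ this gives
$$\left|{\rm Ric}_{\varphi(t)}-\tfrac{1}{7}R_{\varphi(t)}g_{\varphi(t)}\right|\leq (C_{1}+C_{2}K_{1})(R_{\varphi(t)}+c)\leq (C_{1}+C_{2}K_{1})(2K_{0}+1).$$
By the triangle inequality,
$$|{\rm Ric}_{\varphi(t)}|\leq |E_{\varphi(t)}|+\tfrac{\sqrt7}{7}|R_{\varphi(t)}|,$$
which is uniformly bounded. Theorem \ref{thm4.5}(b) then gives the extension past $T$.

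\emph{Main obstacle: the constant $c$.} The corollary itself needs no new estimate. The only delicate point is that Theorem \ref{thm4.10} speaks of a ``universal'' constant $c$. One has to check that \eqref{4.4)} holds for any $c$ with $\inf(R+c)>0$, in particular for $c=K_{0}+1$, and that the resulting $C_{1},C_{2}$ depend only on $M$, on $\varphi(0)$, on $T$ and on that lower bound. I would verify this by rereading the maximum-principle argument for $f=|E|^{2}/(R+c)^{2}$: there $c$ enters only through the positive lower bound on $R+c$, exactly as in Cao's argument recalled in the remark after Theorem \ref{thm2.7}.
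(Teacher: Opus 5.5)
Your proposal is correct and follows essentially the paper's route. You take $c$ with $R+c\geq 1$, as the paper itself does in the subsequent blow-up-rate theorem; you feed the bounded $C^{1}$ norm of the Weyl tensor into the pinching estimate \eqref{4.4)} to bound the traceless Ricci tensor, and hence ${\rm Ric}$; and you then invoke Theorem \ref{thm4.5}(b). Step 3 is the operative argument. Step 2, read literally off the existentially quantified $c$ in Theorem \ref{thm4.10}, only works once one knows (as you correctly flag) that \eqref{4.4)} holds for a $c$ with $R+c$ bounded away from zero.
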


%%%%%%%%%%%%%

Moreover, we derive a blow-up rate estimate for the 
$C^{1}$ norm of the Weyl tensor.

\begin{theorem} [\cite{LL23}]
Let $M$ be a closed $7$-manifold and $(\varphi(t))_{t\in[0,T)}$ be a solution to the
flow \eqref{The closed Laplacian flow} for closed $G_{2}$-structures. If $T$ is the maximal time and $T< +\infty$, then we have 

    \begin{itemize}
    
        \item[(1)] \text{either}
        $$
        \liminf_{t\rightarrow T}\left(\min_{M} R_{\varphi(t)}\right)=-\infty,
        $$

        \item[(2)] \text{or}
        $$
        \liminf_{t\rightarrow T}\left(\min_{M} R_{\varphi(t)}\right)>-\infty, 
        $$
        but for any positive constants $C>0$ and $\delta>0$, we have
    $$
    \limsup_{t\rightarrow T}\left(\max_{M}|W_{\varphi(t)}|_{C^{1}(M,g_{\varphi(t)})}\right)>\frac{C}{(T-t)^{1-\delta}}.
    $$

    \end{itemize}
\end{theorem}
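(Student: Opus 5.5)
We argue by contradiction, combining Chen's blow-up rate (Theorem \ref{thm4.9}) with the pinching estimate \eqref{4.4)} from the proof of Theorem \ref{thm4.10}. Suppose alternative (1) fails, so $\liminf_{t\to T}\min_M R_{\varphi(t)}>-\infty$. Since $R=-|\mathbf{T}|^{2}\le 0$ for closed $G_{2}$-structures, this gives $|R_{\varphi(t)}|+|\mathbf{T}(t)|^{2}\le C_0$ on $M\times[0,T)$. We can then fix a constant $c$ with $c\le R_{\varphi(t)}+c\le 2c$. Now suppose (2) also fails. Then there are $C,\delta>0$ with $\max_M|W_{\varphi(t)}|_{C^{1}}\le C(T-t)^{-(1-\delta)}$ for all $t$ near $T$. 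The limsup form in the statement should be read as saying that this bound cannot hold eventually; I would make the negation precise in exactly this way.

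\textbf{Step 1: bound the Ricci curvature.} Since $R+c$ is pinched between two positive constants, estimate \eqref{4.4)} yields
$$|E_{\varphi(t)}|\le C_1'+C_2'\max_{M\times[0,t]}|W|_{C^{1}}\le C_3\,(T-t)^{-(1-\delta)}.$$
Using $|{\rm Ric}|\le |E|+|R|/\sqrt7$ and the bound on $R$, this gives $|{\rm Ric}_{\varphi(t)}|\le C_4(T-t)^{-(1-\delta)}$.

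\textbf{Step 2: integrate.} The bound from Step 1 is integrable, since $\int_0^T (T-t)^{-(1-\delta)}dt=T^{\delta}/\delta<\infty$. Together with $|\mathbf{T}|^2\le C_0$, we obtain
$$\int_0^T\max_M\bigl(|{\rm Ric}_{\varphi(t)}|+|\mathbf{T}(t)|^{2}\bigr)\,dt<+\infty.$$
This contradicts the second conclusion of Theorem \ref{thm4.9}, because $T<+\infty$ is maximal. So one of (1), (2) must hold.

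\textbf{Main obstacle.} The delicate step is justifying \eqref{4.4)} uniformly up to $T$ with constants $C_1,C_2$ independent of $t$. This requires that the maximum-principle argument for $f=|E|^{2}/(R+c)^{2}$ in \cite{LL23} uses only the lower bound on $R$, which is what the dichotomy supplies. I would check that the reaction terms in the evolution of $f$ involving $\mathbf{T}$, $\nabla\mathbf{T}$ and $R$ are absorbed using $|\mathbf{T}|^{2}=-R\le C_0$. If instead the constants there depend on a $W$-bound, I would apply the estimate on $[0,t]$ with the running maximum of $|W|_{C^{1}}$ exactly as it appears in \eqref{4.4)}. That is already how Step 1 uses it.
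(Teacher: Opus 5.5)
Your argument is correct. Its first half coincides with the paper's proof. Both of you read the failure of (2) as a pointwise bound $\max_M|W_{\varphi(t)}|_{C^1}\le C(T-t)^{-(1-\delta)}$ for $t$ near $T$. Both use the lower bound on $R$ to normalize $R+c$ (the paper takes $R+c\ge 1$). Both then feed this into \eqref{4.4)} to get $|{\rm Ric}_{\varphi(t)}|\le C(T-t)^{-(1-\delta)}$.

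The two routes diverge in how the contradiction is reached.
\begin{itemize}
\item[(a)] The paper integrates $\partial_t g=-2{\rm Ric}-\frac43|\mathbf{T}|^2g-4\widehat{\mathbf{T}}$ in time to obtain $e^{-N}g_{\varphi(0)}\le g_{\varphi(t)}\le e^{N}g_{\varphi(0)}$. It then asserts that, with the bound on scalar curvature, an extension argument ``analogous'' to the earlier one continues the flow past $T$.
\item[(b)] You observe that the same time-integrability already contradicts the second conclusion of Theorem \ref{thm4.9}. That conclusion is precisely a blow-up criterion phrased in terms of $\int_0^T\max_M(|{\rm Ric}|+|\mathbf{T}|^2)\,dt$.
\end{itemize}

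Your route is shorter and more self-contained given the results already stated. The Ricci curvature here is only integrable in time, not bounded, so Theorem \ref{thm4.5}(b) does not apply as it stands. The paper's ``analogous argument'' therefore needs an extension result under a time-integrable Ricci bound, which is essentially what Chen's integral criterion supplies. The paper's route makes the geometric mechanism (uniform equivalence of metrics) explicit but leaves the final extension step as a sketch. Yours delegates that step to a cited theorem.

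Two small points.
\begin{itemize}
\item[(i)] Since $R=-|\mathbf{T}|^2\le 0$, you have $R+c\le c$. The pinching should therefore read $c/2\le R+c\le c$ for $c\ge 2C_0$, not $c\le R+c\le 2c$. This slip does not affect the argument.
\item[(ii)] Your ``main obstacle'' is not an obstacle at the level of this paper. \eqref{4.4)} is stated with constants $C_1,C_2$ valid for all $t\in[0,T)$ once $R+c>0$, and the paper's proof uses it in exactly the way your Step 1 does.
\end{itemize}
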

\begin{proof}
   Suppose there exists a universal positive constant ${c}$ such that 
    $$
    R_{\varphi(t)}+{c}\geq1.
    $$
    Assume that
    $$\limsup_{t\rightarrow T}\left(\max_{M}|W_{\varphi(t)}|_{C^{1}(M,g_{\varphi(t)})}\right)\leq\frac{C}{(T-t)^{1-\delta}}$$
    for some constant $C>0$ and $\delta>0$, Then, by inequality \eqref{4.4)}, it follows that
    \begin{align}
        |{\rm Ric}_{\varphi(t)}|_{g_{\varphi(t)}}\leq\frac{C}{(T-t)^{1-\delta}}\notag.
    \end{align}
    Integrating $g(t)$ in time, there exist{s} some constant $N$, such that
    \begin{align}
        e^{-N}g_{\varphi(0)}\leq g_{\varphi(t)}\leq e^{N}g_{\varphi(0)}\notag
    \end{align}
    for any $t\in [0,T)$. Since the scalar curvature is also uniformly bounded on $[0, T)$, an argument analogous to that in Theorem {\ref{4.4)}} shows that the solution of the Laplacian flow can be extended beyond time $T$, leading to a contradiction.
\end{proof}

%%%%%%%%%%%%%%%%%%%%%%%%%%%%%%%%%%%%%%%%%%%%%%%%gF
\subsection{Gradient estimates and parabolic frequency functionals }
%%%%%%%%%%%%%%%%%%%%%%%%%%%%%%%%%%%%%%%%%%%%%%%%%%%
Gradient estimates provide a versatile tool for studying the analytical, topological, and geometrical properties of manifolds. Therefore, {we} first consider the following Li-Yau type gradient estimate of the heat equation 
\begin{align}
    \label{heat equation}\partial_{t}u(t)=\triangle_{\varphi(t)}u(t)
\end{align}
under the Laplacian flow \eqref{The closed Laplacian flow}, where $\triangle_{\varphi(t)}={\rm tr}_{g_{\varphi(t)}}\left(\nabla^{2}_{g_{\varphi(t)}}\right)$ is the {trace} Laplacian induced by $g_{\varphi(t)}$.

\begin{theorem}[\cite{LLX25}]\label{theorem 1.1}
    Let $(M,\varphi(t))_{t\in(0,T]}$ be the solution of the flow \eqref{The closed Laplacian flow} on a closed $7$-manifold $M$ with $T<+\infty$ and $-Kg_{\varphi(t)}\leq {\rm Ric}_{\varphi(t)}\leq0$, where $g_{\varphi(t)}$ is the Riemannian metric associated with $\varphi(t)$ and $K$ is a positive constant. If $u(t)$ is a positive solution of the heat equation $(\ref{heat equation})$, then on $M\times(0, T]$, the following estimate
    \begin{equation}
        \frac{|\nabla_{g_{\varphi(t)}} u(t)|_{g_{\varphi(t)}}^{2}}{u^{2}(t)}-\alpha\frac{\partial_{t}u(t)}{u(t)}\leq \frac{7\alpha}{2at}+\left(\frac{49\alpha}{3a}+\frac{105\alpha^{2}-98\alpha}{2a(\alpha-1)}+\frac{7\sqrt{29}\alpha}{2\sqrt{ab}}\right)K
    \end{equation}
holds for any $\alpha>1$ {and} $a,b>0$ with $\displaystyle{a+2b=\frac{1}{\alpha}}$.
\end{theorem}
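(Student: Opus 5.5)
We follow the Hamilton/Li--Yau strategy adapted to a time-dependent metric, as in the Ricci flow case, using the evolution $\partial_t g=-2{\rm Ric}-\frac{4}{3}|{\bf T}|^2 g-4\widehat{{\bf T}}$ recorded above. Write $f=\ln u$, so that $(\partial_t-\triangle)f=|\nabla f|^2$. Consider the Harnack quantity
\begin{equation*}
F=t\left(|\nabla f|^{2}-\alpha\,\partial_{t}f\right)=t\left((1-\alpha)|\nabla f|^{2}-\alpha\triangle f\right)
\end{equation*}
(the second form uses the heat equation). The goal is to show that at an interior maximum point of $F$ on $M\times(0,T]$ (which exists since $M$ is closed and $F=0$ at $t=0$) one has a quadratic inequality in $F$, which yields the stated bound after dividing by $t$.

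The first step is to compute $(\partial_t-\triangle)F$. Two ingredients enter beyond the static Bochner formula $\triangle|\nabla f|^2=2|\nabla^2 f|^2+2\langle\nabla f,\nabla\triangle f\rangle+2{\rm Ric}(\nabla f,\nabla f)$.
\begin{itemize}
\item[(i)] The time derivative of $|\nabla f|^2$ picks up $-\partial_t g^{ij}$ terms, namely $2{\rm Ric}(\nabla f,\nabla f)+\frac43|{\bf T}|^2|\nabla f|^2+4\widehat{{\bf T}}(\nabla f,\nabla f)$.
\item[(ii)] $\partial_t(\triangle f)$ contains $-\partial_t g^{ij}\nabla_{ij}f$ plus a term $g^{ij}\partial_t\Gamma^k_{ij}\nabla_k f$, which involves the divergence of $\partial_t g$, i.e.\ $\nabla{\rm Ric}$, $\nabla|{\bf T}|^2$ and $\nabla\widehat{{\bf T}}$.
\end{itemize}

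The second step is to control all torsion terms by $K$ using closedness. Since $|{\bf T}|^2=-R$ and $-K g\le{\rm Ric}\le 0$, we get $0\le|{\bf T}|^2\le 7K$. Because ${\bf T}$ is skew, $\widehat{{\bf T}}={\bf T}{\bf T}$ is negative semidefinite with $|\widehat{{\bf T}}|\le|{\bf T}|^2$. The divergence term in $\partial_t\Gamma$ is reduced via the contracted Bianchi identity $\nabla^i R_{ij}=\frac12\nabla_j R$ and the closed-$G_2$ identity ${\rm div}\,{\bf T}=0$ together with the formula for ${\rm Ric}$ in terms of $\nabla{\bf T}$ and ${\bf T}*{\bf T}$ (Lotay--Wei). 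The expectation is that these combine into terms of the form $\langle\nabla f,\nabla R\rangle$, and that after using the flow structure the gradient-of-curvature terms cancel. This is the analogue of the well-known fact that for Ricci flow $\partial_t\triangle f$ contains no $\nabla{\rm Ric}$ term.

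The main obstacle is this cancellation. If it is not exact, the plan is to absorb the residual term into $|\nabla f|^2$ and $|\nabla^2 f|^2$ by Cauchy--Schwarz, which would consume part of the $|\nabla^2 f|^2$ budget. This is presumably where the split $a+2b=1/\alpha$ and the $\sqrt{29}/\sqrt{ab}$ term originate: $a$ weights the use of $|\nabla^2 f|^2\ge\frac17(\triangle f)^2$, and $b$ weights the absorption of a mixed term $\widehat{{\bf T}}*\nabla^2 f$ or ${\rm Ric}*\nabla^2 f$, whose coefficient norm is bounded by $\sqrt{29}K$ after the algebraic bounds of the second step.

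The final step is the maximum principle. At the maximum point $(x_0,t_0)$ of $F$, with $F>0$, we have $\nabla F=0$, $\triangle F\le0$ and $\partial_tF\ge0$. Using $|\nabla^2 f|^2\ge\frac17(\triangle f)^2$ and writing $\triangle f$ in terms of $F/t$ and $|\nabla f|^2$, we then apply Young's inequality with parameter $\alpha>1$ (which produces the $\frac{105\alpha^2-98\alpha}{2a(\alpha-1)}K$ term from cross terms $K|\nabla f|^2$). This gives
\[
\frac{a}{7\alpha^2}F^2-\frac{F}{2}\le t_0\,C(\alpha,a,b)K\,F,
\]
so $F\le \frac{7\alpha}{2a}+t\,C K$ with $C$ equal to the bracket in the statement. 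Dividing by $t$ finishes the proof.
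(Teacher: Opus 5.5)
\textbf{The gap.} It sits exactly at the step you call the main obstacle. The $\partial_t\Gamma$ contribution does not cancel, and under $-Kg\le{\rm Ric}\le 0$ it cannot be absorbed.

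Write $\partial_tg=2h$. Then $\partial_t(\triangle f)=\triangle(\partial_tf)-2h^{ij}\nabla_i\nabla_jf-\langle 2\,{\rm div}\,h-\nabla{\rm tr}_gh,\nabla f\rangle$. Take Lotay--Wei's $h=-{\rm Ric}-\frac13|\mathbf T|^2g-2\widehat{\mathbf T}$; this is the normalization consistent with the constant $\frac{49\alpha}{3a}$, whereas the paper displays $\frac43$ instead of $\frac23$ in $\partial_tg$.

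\begin{itemize}
\item The ${\rm Ric}$ part drops out by the contracted Bianchi identity, as you expect.
\item For the torsion part, ${\rm tr}_g\widehat{\mathbf T}=-|\mathbf T|^2$.
\item ${\rm div}\,\mathbf T=0$ gives $({\rm div}\,\widehat{\mathbf T})_l=\mathbf T_{ik}\nabla_i\mathbf T_{kl}=\frac12\mathbf T_{ik}(d\mathbf T)_{ikl}-\frac14\nabla_l|\mathbf T|^2$.
\item Here $d\mathbf T=-\frac12\Delta_\varphi\varphi$ is algebraic in $h$, so it is $O(|{\rm Ric}|+|\mathbf T|^2)$.
\end{itemize}
Altogether
\[
2\,{\rm div}\,h-\nabla{\rm tr}_gh=\tfrac23\nabla|\mathbf T|^2+O\big(|\mathbf T|(|{\rm Ric}|+|\mathbf T|^2)\big)=-\tfrac23\nabla R+O(K^{3/2}).
\]
With the paper's displayed normalization the coefficient becomes $\frac73$; it is nonzero either way.

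So $(\partial_t-\triangle)F$ contains $-\frac23\alpha t\langle\nabla R,\nabla f\rangle$. No other term of $(\partial_t-\triangle)F$ contains derivatives of curvature, so nothing can cancel it. Your Cauchy--Schwarz fallback would need a pointwise bound on $|\nabla R|=2|\langle\mathbf T,\nabla\mathbf T\rangle|$, and the hypotheses do not give one. ${\rm Ric}$ determines only part of $\nabla\mathbf T$; the rest is tied to Weyl curvature through the $G_2$ Bianchi identity. Also, $|\nabla^2 f|^2$ is of no use against a term linear in $\nabla f$.

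\textbf{The stated constants leave no room for this term.} If you run your scheme with the $\partial_t\Gamma$ term simply dropped, i.e.\ using the Ricci-flow identity $\partial_t\triangle f=\triangle\partial_tf-2h^{ij}\nabla_i\nabla_jf$, the constants come out exactly. Split $2|\nabla^2f|^2=2\alpha a|\nabla^2f|^2+4\alpha b|\nabla^2f|^2$ and use $-\alpha\triangle f=F/t+(\alpha-1)|\nabla f|^2$.

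\begin{itemize}
\item The first piece gives $-\frac{2a}{7\alpha}t\big(F/t+(\alpha-1)|\nabla f|^2\big)^2$, which produces $\frac{7\alpha}{2at}$. Note the coefficient is $\frac{2a}{7\alpha}$, not your $\frac{a}{7\alpha^2}$.
\item The trace part of $h$ in $2\alpha t\,h^{ij}\nabla_i\nabla_jf$ gives $\frac23|\mathbf T|^2F\le\frac{14}{3}KF$. Its $|\nabla f|^2$ part cancels the trace part of $2(\alpha-1)t\,h(\nabla f,\nabla f)$. This produces $\frac{49\alpha}{3a}=\frac{7\alpha}{2a}\cdot\frac{14}{3}$.
\item The remaining gradient terms are at most $(30\alpha-28)Kt|\nabla f|^2$, from $-2{\rm Ric}\le 2Kg$ and $-{\rm Ric}-2\widehat{\mathbf T}\le 15Kg$. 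After multiplying by $t$ they are dominated by the cross term $\frac{4a(\alpha-1)}{7\alpha}F\,t|\nabla f|^2$ unless $F\le\frac{7\alpha(15\alpha-14)}{2a(\alpha-1)}Kt$. This case split, not Young's inequality, gives $\frac{105\alpha^2-98\alpha}{2a(\alpha-1)}$.
\item Finally, $|2\alpha\langle{\rm Ric}+2\widehat{\mathbf T},\nabla^2f\rangle|\le 4\alpha b|\nabla^2f|^2+\frac{\alpha}{4b}|{\rm Ric}+2\widehat{\mathbf T}|^2$, with $|{\rm Ric}+2\widehat{\mathbf T}|^2\le 14K^2+392K^2=14\cdot 29K^2$. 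This is a constant term, not a multiple of $F$ as in your final inequality, and it produces $\frac{7\sqrt{29}\alpha}{2\sqrt{ab}}$.
\end{itemize}

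So apart from this one term your outline is the right scheme. However, it reaches the stated bound only by treating $\partial_t\triangle f$ as in the Ricci flow. To turn it into a proof you need a genuine way to control $\langle\nabla R,\nabla f\rangle$ at the maximum point, for example an extra pointwise bound on $|\nabla\mathbf T|$ or $|{\rm Rm}|$. Any such bound would necessarily alter the constants.
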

As an application, we derive the Harnack inequality on spacetime and obtain the monotonicity of parabolic frequency functional under the Laplacian flow under bounded Ricci curvature.

Besides, we also consider the parabolic frequency functional for the solution of the linear heat equation 
\begin{equation}
    (\partial_{t}-\triangle_{\varphi(t)})u(t)=a(t)\!\ u(t)\label{lhe}
\end{equation}
under the Laplacian flow with bounded Bakry-\'{E}mery Ricci curvature, where $a(t)$ is a time-dependent smooth function. The parabolic frequency functional is defined by  
\begin{align}
U(t)=\exp\left\{-\int_{t_{0}}^{t}\left[-\frac{2}{3}R_{0}+\frac{h'(s)+\kappa(s)}{h(s)}\right] ds\right\}\frac{\displaystyle{h(t)\int_{M}|\nabla_{g_{\varphi(t)}}u(t)|_{g_{\varphi(t)}}^{2}d \mu_{g_{\varphi(t)}}}}{\displaystyle{\int_{M}u^{2}(t)\!\ d\mu_{g_{\varphi(t)}}}},\notag
\end{align}
where $\displaystyle{R_{0}=\min_{M\times[t_{0},t_{1}]}R_{\varphi(t)}}$, $h(t)$ and $\kappa(t)$ are both time-dependent smooth functions. Then we get the following theorem.

\begin{theorem}[\cite{LLX25}]\label{theorem 1.5}
Let $(M,\varphi(t))_{t\in[0,T]}$ be the solution of the Laplacian flow on a closed $7$-dimensional manifold $M$  with $T<+\infty$ and $\displaystyle{\text{\rm Ric}_{f(t)}\leq \frac{\kappa(t)}{2h(t)}g_{\varphi(t)}}$, where $g_{\varphi(t)}$ is the Riemannian metric associated with $\varphi(t)$. Then the following holds.

\begin{itemize} 

\item[(i)] If $h(t)$ is a negative time-dependent function, then the parabolic frequency $U(t)$ is monotone increasing along the Laplacian flow.

\item[(ii)] If $h(t)$ is a positive time-dependent function, then the parabolic frequency $U(t)$ is monotone decreasing along the Laplacian flow.

\end{itemize}
{
where
$\text{\rm Ric}_{f(t)}=\text{\rm Ric}_{\varphi(t)}+\nabla_{g_{\varphi(t)}}^{2} f(t)$
is the Bakry-\'{E}mery Ricci tensor, $f(t)$ is a time-dependent smooth function on $M$.}
\end{theorem}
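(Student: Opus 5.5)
The plan is to differentiate the parabolic frequency $U(t)$ in time and show that, after the exponential normalizing factor absorbs the lower-order terms, the sign of $U'(t)$ is governed by $h(t)$ alone. The argument is a Cauchy--Schwarz computation in the spirit of Poon and Colding--Minicozzi. Write $g=g_{\varphi(t)}$, $d\mu=d\mu_{g_{\varphi(t)}}$, $I(t)=\int_M u^2\,d\mu$ and $D(t)=h(t)\int_M|\nabla u|^2\,d\mu$, so that
\[
\log U=-\int_{t_0}^{t}\Big[-\tfrac23R_0+\tfrac{h'+\kappa}{h}\Big]ds+\log D-\log I .
\]

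The first step collects the evolution formulas. From the stated metric evolution $\partial_t g=-2{\rm Ric}-\tfrac43|{\bf T}|^2g-4\widehat{{\bf T}}$, and using that ${\bf T}$ is skew (so $\widehat{{\bf T}}$ has trace $-|{\bf T}|^2$), we get $\operatorname{tr}_g\partial_tg=-2R-\tfrac{28}{3}|{\bf T}|^2+4|{\bf T}|^2$. With $|{\bf T}|^2=-R$ this gives $\partial_t d\mu=\tfrac23 R\,d\mu$, and $\partial_t g^{ij}=2R^{ij}+\tfrac43|{\bf T}|^2g^{ij}+4\widehat{{\bf T}}^{ij}$. Consequently
\[
I'=\int\big(2u\Delta u+2au^2+\tfrac23Ru^2\big)\,d\mu=-2\int|\nabla u|^2+2\int au^2+\tfrac23\int Ru^2 .
\]
The term $\tfrac23\int R u^2$ is then bounded below by $\tfrac23R_0 I$; this is the source of the $-\tfrac23R_0$ in the exponent.

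The second step computes $D'$. Differentiating $\int|\nabla u|^2$ produces $\int(\partial_tg^{ij})u_iu_j$ and $2\int\langle\nabla u,\nabla(\Delta u+au)\rangle$. Integrating by parts turns the second piece into $-2\int(\Delta u)^2+2\int a|\nabla u|^2-\dots$. The $f$-twisted Bochner/Ricci terms are then absorbed into the hypothesis ${\rm Ric}_{f}\le\frac{\kappa}{2h}g$, together with the torsion terms $\tfrac43|{\bf T}|^2|\nabla u|^2+4\widehat{{\bf T}}(\nabla u,\nabla u)$. Here skewness gives $\widehat{{\bf T}}(v,v)=-|{\bf T}v|^2\le0$, and $\tfrac43|{\bf T}|^2$ combines with $\tfrac23R$ from the volume form. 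I expect the Hessian term $\nabla^2f$ to be absorbed by working with the weighted measure $e^{-f}d\mu$, or by choosing the drift appropriately, so that $\Delta$ becomes the drift Laplacian. Multiplying by $h$ and using the sign of $h$ to flip the inequality, the ${\rm Ric}_f$ contribution is bounded by $\kappa\int|\nabla u|^2$; this accounts for the $\kappa/h$ in the exponent.

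The third step assembles the pieces:
\[
\frac{U'}{U}=\Big[\tfrac23R_0-\tfrac{h'+\kappa}{h}\Big]+\frac{D'}{D}-\frac{I'}{I}.
\]
After substitution, the $a$-terms cancel, and the remainder reduces to a multiple of
\[
-\frac{2h}{D\,I}\Big[\int(\Delta u)^2\int u^2-\Big(\int u\Delta u\Big)^2\Big],
\]
which by Cauchy--Schwarz has the sign of $-h$. This gives $U$ increasing for $h<0$ and decreasing for $h>0$. The main obstacle is the second step: checking that the torsion terms and the $-\tfrac23R_0$ normalization exactly dominate the leftover curvature terms with the correct sign in both cases of $h$, since dividing by $h$ reverses the inequalities. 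I would first verify this carefully with $f\equiv0$ and then insert the weight.
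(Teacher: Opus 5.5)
Your architecture (logarithmic derivative of $U$, the ${\rm Ric}_f$ hypothesis absorbing the curvature term, $R_0$ absorbing torsion, Cauchy--Schwarz) is the right one, but the argument does not close as you set it up.

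\textbf{Where it breaks.} There are three connected problems.

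First, the volume evolution has the wrong sign. The metric evolves by $\partial_t g=-2{\rm Ric}-\frac23|\mathbf{T}|^2g-4\widehat{\mathbf{T}}$; the $\frac43$ in the displayed formula is a misprint. With the $\frac43$ you used, your trace is $\frac{10}{3}R$, not $\frac43 R$. The correct value is $\partial_t\,dV=\frac23|\mathbf{T}|^2dV=-\frac23R\,dV\ge 0$, reflecting that Hitchin's volume increases.

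Second, with the unweighted measure the torsion terms do not balance. Write $D_0=\int|\nabla u|^2$. The torsion terms in $D_0'$ are $\frac43|\mathbf{T}|^2|\nabla u|^2-4|\mathbf{T}(\nabla u)|^2$: one $\frac23$ comes from $\partial_tg^{ij}$ and one from $\partial_t dV$. The normalization $-\frac23R_0$ absorbs only half of this. The only favourable term, $-\frac23\int|\mathbf{T}|^2u^2/I$ from $I'$, is weighted by $u^2$ rather than $|\nabla u|^2$, so it cannot cover the rest. Even in your own bookkeeping, $-\frac23R_0$ is spent on $I'$ and is then needed again for the residue in $D'$.

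Third, with $dV$ no $\nabla^2 f$ ever appears, so a bound on ${\rm Ric}_f$ (as opposed to ${\rm Ric}$) cannot be used. With $e^{-f}dV$ for an unspecified $f$, you get $\partial_t f$-terms that nothing controls.

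Two smaller points:
\begin{itemize}
\item The sign of $h$ never needs to flip an inequality. The bound ${\rm Ric}_f\le\frac{\kappa}{2h}g$ gives $2\int{\rm Ric}_f(\nabla u,\nabla u)\le\frac{\kappa}{h}D_0$ for either sign. One proves $\frac{d}{dt}\log|U|\le 0$ in both cases, and ${\rm sgn}\,U={\rm sgn}\,h$ turns this into (i) and (ii).
\item Your final bracket is $\le 0$ for every $h$, because $h/D=1/D_0$.
\end{itemize}

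\textbf{The missing idea is the measure.} The hypothesis on ${\rm Ric}_f$ only enters if $d\mu$ in $U$ is $e^{-f}dV_{g(t)}$, up to a spatially constant factor that cancels in the ratio. Here $f$ must solve the conjugate heat equation of the Laplacian flow,
\[
\partial_tf=-\Delta f+|\nabla f|^2+\tfrac23|\mathbf{T}|^2 ,
\]
so that $\frac{d}{dt}\int v\,d\mu=\int(\partial_t-\Delta)v\,d\mu$ for every $v$. Then $I'=2aI-2D_0$, with no curvature term. Under the flow the Ricci terms cancel in
\[
(\partial_t-\Delta)|\nabla u|^2=-2|\nabla^2u|^2+2a|\nabla u|^2+\tfrac23|\mathbf{T}|^2|\nabla u|^2-4|\mathbf{T}(\nabla u)|^2 .
\]
${\rm Ric}_f$ re-enters through the weighted Bochner identity $\int|\nabla^2u|^2d\mu=\int(\Delta_fu)^2d\mu-\int{\rm Ric}_f(\nabla u,\nabla u)\,d\mu$. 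Use $\frac23|\mathbf{T}|^2=-\frac23R\le-\frac23R_0$ on $[t_0,t_1]$ and drop $-4|\mathbf{T}(\nabla u)|^2\le 0$. This gives
\[
D_0'\le-2\int(\Delta_fu)^2d\mu+\Big(\frac{\kappa}{h}+2a-\frac23R_0\Big)D_0 .
\]
Since $D_0=-\int u\Delta_fu\,d\mu$, it follows that
\[
\frac{d}{dt}\log|U|\le\frac{2}{D_0I}\Big[\Big(\int u\Delta_fu\,d\mu\Big)^2-\int u^2d\mu\int(\Delta_fu)^2d\mu\Big]\le 0 .
\]
Thus $-\frac23R_0$ cancels exactly the $\frac23|\mathbf{T}|^2$ from $\partial_tg^{ij}$, and nothing else needs it. Note also that the Cauchy--Schwarz step must be done with $\Delta_f$, not $\Delta$.
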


The backward uniqueness of solutions to parabolic equations has been a subject of extensive study for over half a century, leading to numerous significant results and applications.
As an application of Theorem \ref{theorem 1.5}, we obtain the following backward uniqueness theorem:
\begin{corollary}[\cite{LLX25}]
Let $(M,\varphi(t))_{t\in[0,T]}$ be the solution of the Laplacian flow on a closed $7$-dimensional manifold $M$  with $T<+\infty$ and
$$
\displaystyle{\text{\rm Ric}_{f(t)}\leq \frac{\kappa(t)}{2h(t)}g_{\varphi(t)}}, 
$$
where $g_{\varphi(t)}$ is the Riemannian metric associated with $\varphi(t)$, $\kappa(t)$ and $h(t)$ are constants dependent only on $t$. Let $u(t)$ be a solution to the heat equation $(\ref{heat equation})$.  If $u(t_{1})=0$, then $u(t)\equiv 0$ for any $t\in[t_{0},t_{1}]\subset(0,T)$.
\end{corollary}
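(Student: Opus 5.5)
The plan is to obtain backward uniqueness from the monotonicity in Theorem \ref{theorem 1.5}, arguing by contradiction on the time interval $[t_{0},t_{1}]$.

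\textbf{Setting up the contradiction.} Suppose $u(t_{1})=0$ but $u(s)\not\equiv0$ for some $s\in[t_{0},t_{1})$. The $L^{2}$ norm $I(t)=\int_{M}u^{2}\,d\mu_{g_{\varphi(t)}}$ satisfies a linear differential inequality. Using $\partial_{t}d\mu=-\frac{2}{3}|\mathbf{T}|^{2}d\mu=\frac{2}{3}R\,d\mu$ along the Laplacian flow, we get
\[
I'(t)=-2D(t)+\tfrac{2}{3}\int_{M}Ru^{2}\,d\mu,
\qquad D(t)=\int_{M}|\nabla u|^{2}\,d\mu .
\]
Hence $u\equiv0$ at one time forces $I\equiv0$ thereafter, by Gronwall. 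So there is a maximal interval $[s,t^{\ast})$ on which $I>0$, with $I(t)\to0$ as $t\to t^{\ast}\le t_{1}$.

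\textbf{Bounding $\log I$ on that interval.} On $[s,t^{\ast})$ the frequency $U(t)$ is well defined. Fix $h$ to be a positive constant, so case (ii) of Theorem \ref{theorem 1.5} applies and $U$ is monotone decreasing. The exponential prefactor in $U$ is bounded above and below by constants depending only on $t_{0},t_{1},R_{0},h,\kappa$. Therefore
\[
\frac{D(t)}{I(t)}\le C\,U(t)\le C\,U(s)=:C'
\quad\text{on }[s,t^{\ast}).
\]
Substituting into the identity for $I'$ gives $(\log I)'\ge -2C'+\frac{2}{3}R_{0}$. Integrating shows $\log I(t)\ge\log I(s)-C''$, which stays bounded below as $t\to t^{\ast}$. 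This contradicts $I(t^{\ast})=0$. Hence $I>0$ up to $t_{1}$, contradicting $u(t_{1})=0$. The conclusion is $u\equiv0$ on $[t_{0},t_{1}]$.

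\textbf{Main obstacle.} The key step is choosing the sign of $h$ and the normalisation in $U$ so that $U$ yields an \emph{upper} bound on $D/I$ going forward in time. I take $h$ positive, so $U$ is decreasing. I also need to check that the hypothesis $\mathrm{Ric}_{f}\le\frac{\kappa}{2h}g$ with constant $\kappa,h$ is exactly what Theorem \ref{theorem 1.5} requires. Finally, the case $D(s)=0$ (so $u(s)$ is constant) should be handled separately and trivially: then $U(s)=0$, and monotonicity gives $D\equiv0$, so $u$ stays a nonzero constant, again contradicting $u(t_{1})=0$.
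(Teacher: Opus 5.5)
Your argument is correct and is the intended one: the paper gives no separate proof and simply calls this an application of Theorem \ref{theorem 1.5}, and you carry out exactly that application. Monotonicity of $U$ bounds $D/I$ from above on the maximal interval where $I>0$, which bounds $\log I$ from below and contradicts $I\to 0$; choosing $h\equiv 1$ is legitimate because the hypothesis only involves the ratio $\kappa/h$, and for $h<0$ case (i) yields the same upper bound on $D/I$ anyway. One harmless slip is the sign of the volume evolution: along the Laplacian flow the volume increases, $\partial_t d\mu=\frac{2}{3}|\mathbf{T}|^{2}d\mu=-\frac{2}{3}R\,d\mu$, so $I'=-2D-\frac{2}{3}\int_M Ru^{2}\,d\mu\ge -2D$, which only makes your lower bound on $(\log I)'$ easier.
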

\begin{remark}
    An important aspect in the study of the Laplacian flow is the analysis of Laplacian solitons. As self-similar solutions, they play a key role in understanding singularity formation. We expect that these results on estimates will contribute to further research on Laplacian solitons.
\end{remark}

\subsection{Modified coflow}
%%%%%%%%%%%%%%%%%%%%%%%%%%%%%%%%%%%%%%%%%%%%%%%%%%
In fact, a $G_{2}$-structure and the corresponding metric can also be defined using the 4-form $\psi=\ast_{\varphi}\varphi$. Thus Karigiannis, Mckay, and Tsui \cite{KMT12} introduced the following  Laplacian coflow (with the opposite sign):
\begin{equation}
  \left \{
       \begin{array}{rl}
          \partial_{ t}\psi(t)&=\Delta_{\psi(t)}\psi(t),\\
%           d\varphi(t)&=0,\\
           \psi(0)&=\psi,
       \end{array}
  \right.
  \label{The coflow}
\end{equation}
where $\Delta_{\psi(t)}$ is the Hodge Laplacian induced by $\psi(t)$ and $\psi$ is an initial coclosed $G_{2}$-structure. However, since the Laplacian coflow $\eqref{The coflow}$ is not weakly parabolic and the principal symbol of $\triangle_{\psi}\psi$ is indefinite, it cannot be modified to be parabolic in the same way as the Ricci flow. Therefore, Grigorian \cite{Gri13} introduced the modified Laplacian coflow {of coclosed $G_{2}$-structures} to address this issue:
\begin{equation}
  \left \{
       \begin{array}{rl}
          \partial_{ t}\psi(t)&=\Delta_{\psi(t)}\psi(t)+2d\left[\left(A-{\rm tr} \big{(}\mathbf{T}(t)\big{)}\right)\varphi(t)\right],\\
%           d\varphi(t)&=0,\\
           \psi(0)&=\psi,
       \end{array}
  \right.
  \label{modified Laplacian coflow}
\end{equation}
where ${\rm tr}\big{(}\mathbf{T}(t)\big{)}$ is the trace of the torsion $\mathbf{T}(t)$, $A$ is a {nonnegative} constant, and $\psi$ is an initial coclosed $G_{2}$-structure.
\begin{remark}
    The constant $A$ of the modified Laplacian coflow can be set to zero, adding it may allow for more flexibility. When we set $A$ is a nonnegative constant, the volume of $M$ increases monotonically along the flow.
\end{remark}
{If restricted to coclosed $G_{2}$ structure($d\psi(t)=0$), then the modified Laplacian coflow $\eqref{modified Laplacian coflow}$ preserves the cohomology class of $\psi$. Under the above condition,} Grigorian proved the short-time existence and uniqueness of the modified Laplacian coflow in \cite{Gri13}. In 2018, Chen provided the Shi-type estimate and studied finite-time singularities of this flow in \cite{Chen18}. Bedulli and Vezzoni showed the stability of the modified Laplacian coflow $\eqref{modified Laplacian coflow}$ in \cite{BV20}. As for the real-analyticity, we prove {the following.}
\begin{theorem}[\cite{LL24}]\label{thm4.16}
    {Let $M$ be a compact manifold} and $\psi(t)_{t\in[0,T]}$ be a smooth solution of the modified Laplacian coflow $\eqref{modified Laplacian coflow}$ with coclosed $G_{2}$-structure on an open set $U\subset M$. For each time $t\in(0,T]$, $(U,\psi(t),g_{\psi}(t))$ is real analytic.
\end{theorem}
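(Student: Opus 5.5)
The plan follows the strategy used by Lotay and Wei for the Laplacian flow \cite{LW19B} and by Bando and Shi for the Ricci flow. First we rewrite the flow in terms of $\varphi(t)$ and $g_{\psi}(t)$, and derive a strictly parabolic gauge-fixed system. The modified coflow $\eqref{modified Laplacian coflow}$ is only weakly parabolic, but Grigorian's DeTurck-type trick \cite{Gri13} applies: adding $\mathcal{L}_{V(\psi)}\psi$ for a suitable vector field $V$ built from a background connection makes the linearization strictly parabolic on coclosed forms. The resulting system for $(\psi,g)$ reads, in local coordinates,
$$\partial_t\psi = g^{ij}\partial_i\partial_j\psi + Q(\psi,\partial\psi),$$
where $Q$ is a polynomial (hence real analytic) expression in $\psi$, $\psi^{-1}$-type algebraic quantities and first derivatives. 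The metric $g_\psi$ depends real-analytically (algebraically) on $\psi$ pointwise. So it suffices to show that $\psi(t)$ is analytic in space for each $t>0$.

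The key step is a family of Shi-type derivative estimates with factorial control. Following \cite{LW19B} and Kotschwar's method for the Ricci flow, we will prove that on a smaller set $U'\Subset U$ there exist constants $C,L$ such that
$$t^{k/2}\,|\nabla^{k}\mathrm{Rm}|+t^{(k+1)/2}|\nabla^{k+1}\mathbf{T}|\le C L^{k}(k+1)!\quad\text{on } U'\times(0,T]$$
for all $k$. Here $\nabla^k$ is taken with respect to $g_\psi(t)$, and we use Chen's Shi-type estimates \cite{Chen18} for the coflow as the base case. To do this, we compute the evolution equations $(\partial_t-\Delta)\nabla^k\mathrm{Rm}$ and $(\partial_t-\Delta)\nabla^{k+1}\mathbf{T}$ under $\eqref{modified Laplacian coflow}$. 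These are sums of terms $\nabla^i A * \nabla^{j}B$ with $i+j=k$ (plus lower-order torsion terms), and the coefficients are binomial-type. We then form a weighted quantity
$$\Phi=\sum_{k\le m}\frac{t^{k}}{(k!)^2 L^{2k}}\big(|\nabla^k\mathrm{Rm}|^2+t|\nabla^{k+1}\mathbf{T}|^2\big),$$
localized by a cutoff function. We show that it satisfies $(\partial_t-\Delta)\Phi\le C\Phi$ plus controllable terms, with constants independent of $m$, and then apply the maximum principle.

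From these bounds we conclude as in \cite{LW19B}. In harmonic coordinates for $g_\psi(t)$ at fixed $t$, the metric components satisfy an elliptic equation $\Delta g_{ij}=-2R_{ij}+\text{lower order}$, and $d^{\ast}$/$d$ relations link $\psi$ to $\mathbf{T}$. So the factorial curvature bounds transfer to factorial bounds on all coordinate derivatives of $g_{ij}$ and $\psi_{ijkl}$, and Taylor series then converge. This gives real analyticity of $(U,\psi(t),g_\psi(t))$.

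The main obstacle is the combinatorial step: the evolution equation for $\nabla^k\mathbf{T}$ under the modified coflow contains the extra term $d[(A-\mathrm{tr}\,\mathbf{T})\varphi]$. Its contribution involves $\nabla^{k+2}\mathbf{T}$, which is top order, together with products such as $\nabla^i\mathbf{T}*\nabla^j\mathbf{T}*\nabla^l\mathrm{Rm}$ (cubic terms). We must verify that these terms are absorbed by the good gradient terms $-|\nabla^{k+1}\cdot|^2$ and that their sums remain bounded by $\sum\binom{k}{i}i!\,j!\lesssim (k+1)!$. We will try first to express everything via the identity $\nabla\mathbf{T}\sim \mathrm{Rm}+\mathbf{T}*\mathbf{T}$ (the coclosed $G_2$ Bianchi identity). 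That reduces the bookkeeping to Kotschwar's Ricci-flow estimates with additional polynomial lower-order terms.
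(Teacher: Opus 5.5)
Your strategy matches the paper's: a localized Shi-type estimate with factorial constants for $\nabla^k\mathrm{Rm}$ and $\nabla^{k+1}\mathbf{T}$, then the standard passage to analyticity. The paper starts from $\Lambda=(|\mathrm{Rm}|^2+|\nabla\mathbf T|^2+|\mathbf T|^4)^{1/2}\le\mathbf M$ together with $A^2\le\mathbf M$; $A$ enters the evolution equations, so your constants must depend on it. The DeTurck paragraph is not needed, and Grigorian's gauge-fixed coflow is strictly parabolic only in the direction of closed forms, so your displayed system is not literally right. However, the key estimate, as you set it up, does not close.

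\textbf{The time weights.} The paper proves $t^{k/2}\bigl(|\nabla^k\mathrm{Rm}|+|\nabla^{k+1}\mathbf T|\bigr)\le CL^{k/2}(k+1)!$, with the \emph{same} power of $t$ on both terms. This is because $\nabla^k\mathrm{Rm}$ and $\nabla^{k+1}\mathbf T$ both involve $k+2$ derivatives of $\psi$ and scale identically. Your extra factor $t^{1/2}$ on the torsion part is not harmless. Since $\partial_t g$ contains torsion-quadratic terms, $(\partial_t-\Delta)\nabla^k\mathrm{Rm}$ contains terms such as $\nabla^{k+2}\mathbf T*\mathbf T$ and $\nabla^{i+1}\mathbf T*\nabla^{j+1}\mathbf T$ with $i+j=k$. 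Absorbing $t^k|\nabla^k\mathrm{Rm}|\,|\nabla^{k+2}\mathbf T|\,|\mathbf T|$ into your good term $-t^{k+1}|\nabla^{k+2}\mathbf T|^2$ leaves $t^{k-1}|\mathbf T|^2|\nabla^k\mathrm{Rm}|^2$. Bounding the middle products by $\Phi$ likewise costs a factor $t^{-1}$. You end up with $(\partial_t-\Delta)\Phi\le Ct^{-1}(\Phi+\Phi^{3/2})+\dots$, which is not integrable at $t=0$, so the maximum principle gives nothing. With equal weights these terms are absorbed without loss.

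\textbf{The Bianchi shortcut.} Eliminating $\nabla\mathbf T$ via the $G_2$ Bianchi identity is not available for coclosed structures. There $\tau_1=\tau_2=0$, so $\mathbf T=\frac{\tau_0}{4}g-\tau_3$ is a \emph{symmetric} $2$-tensor. The identity prescribes only $\nabla_i\mathbf T_{jk}-\nabla_j\mathbf T_{ik}$ (as $\mathrm{Rm}*\varphi+\mathbf T*\mathbf T*\varphi$), leaving the totally symmetric part of $\nabla\mathbf T$ unconstrained. The reduction works for the closed Laplacian flow (Remark~\ref{rem4.7}) only because there $\mathbf T$ is skew, and a $3$-tensor skew in its last two indices is recovered from its antisymmetrization in the first two. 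So there is no reduction to Kotschwar's Ricci-flow bookkeeping. $\nabla^{k+1}\mathbf T$ must be estimated as an independent, equally weighted quantity, which is exactly why $|\nabla\mathbf T|^2$ sits inside $\Lambda$.

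\textbf{The factorial normalization.} Even with corrected time weights, the normalization $1/(k!)^2$ is too weak for an $m$-independent constant. Since $\sum_{i+j=k}\binom{k}{i}i!\,j!=(k+1)!$, the quadratic terms at level $k$ produce $k+1$ summands of full size. The cubic sums are therefore not bounded by $C\Phi^{3/2}$ uniformly in $m$. You need extra polynomial factors in the weights, for example $(k+1)^4/(k!)^2$, to make the convolution sums summable.
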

\begin{proof}
    The core of our proof relies on an improved Shi-type estimate. Let $B_{g_{0}}(p,r)$ be a geodesic ball in $M$ with $p\in M$ and $r>0$. {We can choose a positive constant $\mathbf{M}$, such that $A^{2}\leq \mathbf{M}$ and}
     $$\Lambda(x,t)=\left(|{\rm Rm}|^{2}(x,t)+|\nabla\mathbf{T}|^{2}(x,t)+|\mathbf{T}|^{4}(x,t)\right)^{\frac{1}{2}}\leq \mathbf{M}$$
     on $B_{g_{0}}(p,r)\times[0,T_{\ast}]$ with $0< T_{\ast}\leq\min\{T,1\}$. By some tricks and curvature estimate,
      there exist positive constants $L,C$ depending only on $r,\mathbf{M}$ and $T$, such that
     \begin{align}
         t^{\frac{k}{2}}\left(|\nabla^{k}{\rm Rm}|(x,t)+|\nabla^{k+1}\mathbf{T}|(x,t)\right)\leq CL^{\frac{k}{2}}(k+1)!
     \end{align}
     for all $k\in\mathbb{N}$ and $(x,t)\in B_{g_{0}}(p,r/2)\times[0,T]$.
Finally, by combining the above improved Shi-type estimate with standard methods in analyticity, we establish the real analyticity of solutions to the modified Laplacian coflow.
\end{proof}

We say a $G_{2}$-structure $\psi$ on $M$ is \textit{complete} if the associated metric $g_{\psi}$ is complete. {Analogue to Ricci flow and Laplacian flow,} following Theorem \ref{thm4.16}, we obtain the unique-continuation results for complete solutions.
\begin{corollary}[\cite{LL24}]
    Let $M$ be a connected and simply connected $7$-manifold, and $\psi(t)$, $\tilde{\psi}(t)$ be smooth complete solutions to the modified Laplacian coflow $\eqref{modified Laplacian coflow}$ on $M\times[0,T]$. Then for any $t\in (0,T]$, the followings hold.
    \begin{itemize}
        \item[(i)] If $\psi(t)=\tilde{\psi}(t)$ on some connected open set $U\subset M$, then there exists a diffeomorphism $F$ of $M$ such that $F^{\ast}\tilde{\psi}(t)\equiv\psi(t)$.
        \item[(ii)] Any local diffeomorphism $F:U\rightarrow V$ between connected open sets $U,V\subset M$ satisfying $F^{\ast}(\psi(t)|_{V})=\psi(t)|_{U}$ can be uniquely extended to a global diffeomorphism $F$ of $M$ with $F^{\ast}(\psi(t))=\psi(t)$.
    \end{itemize}
\end{corollary}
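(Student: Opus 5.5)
The corollary is a unique-continuation statement. The plan is to combine the real analyticity of Theorem \ref{thm4.16} with the classical analytic-continuation argument used for Ricci flow (Kotschwar) and for the Laplacian flow (Lotay--Wei). Fix $t\in(0,T]$. By Theorem \ref{thm4.16}, applied on any relatively compact open set, both $(M,\psi(t),g_{\psi(t)})$ and $(M,\tilde\psi(t),g_{\tilde\psi(t)})$ are real analytic. More precisely, there are real-analytic atlases in which $\psi(t)$, $\tilde\psi(t)$ and their metrics have analytic coefficients; harmonic coordinates give such an atlas, by the improved Shi-type estimates. Completeness of $g_{\psi(t)}$ and $g_{\tilde\psi(t)}$ is used below to continue along geodesics.

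\textbf{Part (ii).} Let $F:U\to V$ be a local diffeomorphism with $F^{\ast}(\psi(t)|_{V})=\psi(t)|_{U}$. Since the metric is algebraically determined by $\psi$, $F$ is a local isometry of the real-analytic complete metric $g_{\psi(t)}$. I would invoke the Myers--Steenrod/Nomizu extension theorem: on a connected, simply connected, complete real-analytic Riemannian manifold, every local isometry between connected open sets extends uniquely to a global isometry.

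The concrete construction runs as follows. Fix $p\in U$. For each curve $\gamma$ from $p$, continue $F$ along $\gamma$ by gluing exponential charts, setting $F(\exp_{x}v)=\exp_{F(x)}(dF_{x}v)$ on normal balls. Analyticity guarantees that two continuations agreeing on an open set agree on the whole overlap, so the continuation is independent of choices and well defined. Completeness guarantees that continuation along every curve is possible. Simple connectivity, via monodromy, makes the result independent of the homotopy class of $\gamma$. Uniqueness of the extension follows because two isometries agreeing to first order at one point agree everywhere on a connected manifold.

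The extended map $F$ is a global isometry, and it remains to show $F^{\ast}\psi(t)=\psi(t)$ on all of $M$. Both sides are real-analytic tensors with respect to the analytic structure of $g_{\psi(t)}$; isometries are analytic in harmonic coordinates. They agree on the open set $U$, so by analytic continuation on the connected manifold $M$ they agree everywhere.

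\textbf{Part (i).} Take $U$ with $\psi(t)=\tilde\psi(t)$ on $U$. Then the identity map $U\to U$ intertwines the two analytic $G_2$-structures. Run the same continuation argument for the pair $(M,g_{\psi(t)})$ and $(M,g_{\tilde\psi(t)})$, both complete and real analytic. This yields a local isometry $F:(M,g_{\psi(t)})\to(M,g_{\tilde\psi(t)})$ extending the identity on $U$, and analytic continuation again gives $F^{\ast}\tilde\psi(t)\equiv\psi(t)$.

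To see that $F$ is a diffeomorphism, run the construction in the reverse direction to obtain an inverse map $G$. Both $F\circ G$ and $G\circ F$ are isometries that equal the identity on an open set. By the uniqueness above they are the identity, so $F$ is a diffeomorphism.

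\textbf{Main obstacle.} The key delicate step is making the real-analyticity from Theorem \ref{thm4.16} usable globally. Theorem \ref{thm4.16} gives analyticity locally on balls of a fixed metric $g_0$, in the sense of the derivative bounds $t^{k/2}|\nabla^{k}{\rm Rm}|\le CL^{k/2}(k+1)!$. These bounds must be converted into analyticity of $g_{\psi(t)}$ and $\psi(t)$ in harmonic or normal coordinates at a fixed positive time, and then the continuation must be carried out.

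I would handle this by noting that on $M$, which need not be compact, every point has a relatively compact neighbourhood on which Theorem \ref{thm4.16} applies; this is why $t>0$ is essential. Then I would apply the standard argument (as in Kotschwar's analyticity paper for Ricci flow) that factorial bounds on covariant derivatives of ${\rm Rm}$ and $\mathbf{T}$ imply analyticity in geodesic normal coordinates. Since $\nabla\psi$ is expressed through $\mathbf{T}$ and $\varphi$, the corresponding factorial bounds on $\nabla^{k}\mathbf{T}$ give analyticity of $\psi(t)$ in the same coordinates.
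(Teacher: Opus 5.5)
Your proposal is correct and follows essentially the same route as the paper, which defers to \cite{LL24} and models the argument on Kotschwar and Lotay--Wei: local real analyticity of each time slice from Theorem \ref{thm4.16}, then the Kobayashi--Nomizu extension theorem for isometries on complete, connected, simply connected real-analytic Riemannian manifolds, then analytic continuation to extend $F^{\ast}\psi(t)=\psi(t)$ (resp.\ $F^{\ast}\tilde\psi(t)=\psi(t)$) from $U$ to all of $M$. One small precision in your sketch of the extension theorem: analyticity is needed to show that $\exp_{F(x)}\circ dF_{x}\circ\exp_{x}^{-1}$ is an isometry on a whole normal ball once it is one near $x$, whereas two isometries that agree on an open set agree on any connected overlap even without analyticity.
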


\textbf{Acknowledgments.}\ \ 
The second author is funded by the Shanghai Institute for Mathematics and Interdisciplinary Sciences (SIMIS) under grant number SIMIS-ID-2025-AD. The authors would also like to thank the referee for the valuable comments and suggestions.

%%%%%%%%%%%%%%%%%%%%%%%%%%%%%%%%%%%%%%%%%%%%%%%%%%%%%%%%%%%%%%%%%%%%%%%%%%%%%%
%%%%%%%%%%%%%%%%%%%%%%%%%%%%%%%%%%%%%%%%%%%%%%%%%%%%%%%%%%%%%%%%%%%%%%%%%%%%%%

\end{document}